\newtheorem{theorem}{Theorem}[section]
\newtheorem{lemma}[theorem]{Lemma}
\newtheorem{corollary}[theorem]{Corollary}
\newtheorem{proposition}[theorem]{Proposition}
\newtheorem{claim}[theorem]{Claim}
\newtheorem{fact}[theorem]{Fact}
\theoremstyle{definition}
\newtheorem{definition}[theorem]{Definition}
\newtheorem{example}[theorem]{Example}
\newtheorem*{question*}{Question}
\theoremstyle{remark}
\newtheorem{remark}[theorem]{Remark}
\newcommand{\mc}[1]{\mathcal{#1}}
\newcommand{\res}{\upharpoonright}
\newcommand{\Dy}{\mathtt{Dy}}
\newcommand{\Shat}{\mathtt{\hat{S}}}
\newcommand{\outl}{\mathtt{out_L}}
\newcommand{\outr}{\mathtt{out_R}}
\newcommand{\st}{\;|\;}
\newcommand{\dbloverline}[1]{\overline{\dbl@overline{#1}}}
\newcommand{\dbl@overline}[1]{\mathpalette\dbl@@overline{#1}}
\newcommand{\dbl@@overline}[2]{%
  \begingroup
  \sbox\z@{$\m@th#1\overline{#2}$}%
  \ht\z@=\dimexpr\ht\z@-2\dbl@adjust{#1}\relax
  \box\z@
  \ifx#1\scriptstyle\kern-\scriptspace\else
  \ifx#1\scriptscriptstyle\kern-\scriptspace\fi\fi
  \endgroup
}
\newcommand{\dbl@adjust}[1]{%
  \fontdimen8
  \ifx#1\displaystyle\textfont\else
  \ifx#1\textstyle\textfont\else
  \ifx#1\scriptstyle\scriptfont\else
  \scriptscriptfont\fi\fi\fi 3
}
\newcommand{\sgnd}[1]{\dbloverline{#1}}
\begin{document}
\sloppy

\title[Computable classifications of continuous, transducer, and regular functions]{Computable classifications of continuous,\\ transducer, and regular functions}

\author[J.\ Franklin]{Johanna N.Y.\ Franklin}
\address{Hofstra University}
\email{johanna.n.franklin@hofstra.edu}

\author[R.\ H\"olzl]{Rupert H\"olzl}
\address{Universität der Bundeswehr M\"unchen}
\email{r@hoelzl.fr}

\author[A.\ Melnikov]{Alexander Melnikov}
\address{Massey University}
\email{alexander.g.melnikov@gmail.com, a.melnikov@massey.ac.nz}

\author[K.M.\ Ng]{\\Keng Meng Ng}
\address{Nanyang Technological University}
\email{kmng@ntu.edu.sg}

\author[D.\ Turetsky]{Daniel Turetsky}
\address{Victoria University of Wellington}
\email{dan.turetsky@vuw.ac.nz}

\thanks{We thank Bakhadyr Khoussainov for his valuable help with the literature about automata, and specifically for bringing \cite{vardi} to our attention.\\
Franklin was supported in part by Simons Foundation Collaboration Grant \#420806, and Ng was partially supported by the MOE grant RG23/19.}

\maketitle

\begin{abstract} We develop a systematic algorithmic framework that unites global and local classification problems using index sets. We prove that the classification problem for continuous (binary) regular functions among almost everywhere linear, pointwise linear-time Lipschitz functions is $\Sigma^0_2$-complete. (Every regular function is pointwise linear-time Lipschitz.) We show that a function $f\colon [0,1] \rightarrow \mathbb{R}$ is (binary) transducer if and only if it is continuous regular.   As one of many consequences, our $\Sigma^0_2$-completeness result covers the class of transducer functions as well. Finally, we show that the Banach space $C[0,1]$ of real-valued continuous functions admits an arithmetical classification among separable Banach spaces. Our proofs combine methods of abstract computability theory, automata theory, and functional analysis.

\end{abstract}

\tableofcontents

\section{Introduction}

One of the primary desiderata for a classification of a collection of mathematical objects is that it make the objects in question easier to understand or manipulate algorithmically. 
Here, we use tools from computability theory and automata theory to measure the complexity of three classification problems, all of which are formulated for the Banach space of continuous functions on the unit interval, $C[0,1]$:

 \begin{enumerate}
  \item[(Q.1)] Is there a simple characterization of the class of continuous regular functions?

 \item[(Q.2)] Which  $f \in C[0,1]$ are transducer?

 \item[(Q.3)] Given a Banach space $\mc B$, how hard is it to decide whether  $\mc B \cong C[0,1]$?
 \end{enumerate}
 We will clarify all the terms used in the questions in due course. 
Chaudhuri, Sankaranarayanan, and Vardi~\cite{vardi} were the first to raise and attack the first question, and Block Gorman et al~\cite{reeeg} have recently studied the question in great depth.   According to \cite{MullerTransducer}, the study  of transducer functions  can be traced back to Ercegovac and
Trivedi~\cite{triverdi},  who initiated the systematic theoretical study of on-line arithmetic.
Partial solutions to the second question can be found in, e.g.,~\cite{MullerTransducer, Lisovik1}.
As far as we know,  Brown~\cite{BrownThesis} was the first to attack the third question.  

All our results are concerned with computability-theoretic aspects of $C[0,1]$.
Note that the first two of these questions are local in the sense that they are concerned with elements of $C[0,1]$ while the last is global in the sense that the space itself is the object of study.
Perhaps unexpectedly, both local and global problems of this kind admit a unified algorithmic approach via \emph{index sets}; before we formally state our results we will discuss this approach in each circumstance.  Our proofs exploit a  wide variety of algorithmic approximation techniques.  We will 
approximate continuous functions using an automaton, a Turing machine, and an oracle Turing machine to attack (Q.1), (Q.2), and (Q.3), respectively. 
 These approximation techniques are relatively combinatorially involved but  our arguments are self-contained, so  not much specific background is required to understand the proofs.

\smallskip

The reader may find some aspects of our proofs and results unexpected. For instance,
our attack on (Q.1) resulted in discovering that a continuous function is regular if and only if it is transducer via a purely automata-theoretic/analytic argument. Our initial expectation was that these classes should be different as they are distinguished in the literature, and that this difference should be (for instance) reflected in their index sets. It is not completely uncommon in the study of index sets to discover new \emph{positive} classification-type results, but such results are rare; see, e.g, \cite{DoMel1,leb}.
 Further, our index set result for (Q.2) (and thus for (Q.1)) entails that none of the many currently known properties of continuous regular (transducer) functions can possibly help in their classification from the algorithmic standpoint. 
 Finally,  we prove that (Q.3) is \emph{arithmetical}, which seems to contradict one's expectation: recall that $C[0,1]$ is a universal separable space with fairly intricate properties. Even though the latter is a global classification-type result, it is also an index set result about $C[0,1]$. Furthermore, its proof is in fact highly ``local'' in the sense that we use oracle computation to approximate a (Schauder) basis of $C[0,1]$ in stages based on a special arithmetical notion of independence for finite sets of functions.

\medskip

To formally state our results, we need to define several computability notions for continuous functions.

  \medskip

\subsection{Computable continuous functions}\label{subs:comp}
Computability in the space $C[0,1]$ plays an important role in computable analysis since it serves as an excellent playground for testing various theories and methods. Some such results have already been mentioned above. For classical results using abstract Turing computability, see \cite{PRich,Myhill} and the book~\cite{Wei00}, and for some recent results relating algorithmic randomness and differentiability, see ~\cite{Nies:2010,BHMN,BMN}. 
Our article makes contributions to this classical line of study of algorithms in $C[0,1]$.

 We will take the dense subspace of polynomials over $\mathbb{Q}$ to be a computable structure on $C[0,1]$.
According to the classical definition, a function $f\colon [0,1] \rightarrow \mathbb{R}$ is computable if there is an effective procedure  which, on input $s$, outputs a tuple of rationals~$\langle q_0, \ldots, q_n \rangle$ such that $\sup_{x \in [0,1]} \{| f(x) - \sum_{i=0}^n q_i x^i |\} < 2^{-s}$.
Note that computability implies continuity.


 An adaptation of polynomial-time computability to continuous functions has previously been studied; the foundations can be found in \cite{Ko}.  However, it is not necessarily clear that polynomial-time computability is the right notion of efficiency when it comes to infinite objects; see \cite{BDKM:19, Roughgarden:19} for a discussion. It is thus not surprising that other models of efficient computability for members of~$C[0,1]$ have been tested as well; we will be focused on two such notions, both of which use finite state automata. There are two distinct types of automata: The first kind is finite-state automata (FSA) in which the machine either accepts or rejects a given (finite length) input, and the second and more general kind is finite-state transducers (FST), which produce an output string based on the given input. These can be adapted to give rise to the following two notions of automatic real analysis.

\smallskip

\noindent {\bf Transducer computability.}  Since real numbers are necessarily infinite objects, we cannot store one as an argument in finite time or space. The traditional solution to this problem is to approximate each real number by a sequence of finite objects and only have a full description of the given real number in the limit. This gives rise to different possible representations of the real numbers, which will be discussed later. In order to define transducer computability for real functions, we will have to consider finite-state transducers that work with an infinite input and output.

Informally, a function $f$ is transducer if there exists a finite state memoryless machine (an automaton) which reads
 finite chunks of the binary representation of $x$ and gradually outputs more and more of the binary representation of $f(x).$ Of course, this definition can be extended and modified to any representation and to nondeterministic automata (etc.)~\cite{Lisovik3,Lisovik2} as well as to higher dimensions of the domain~\cite{Kon}. We will adopt a transducer model that allows for a fixed finite delay, i.e., one in which the transducer is only required to begin writing its output after scanning the first $D$ many input bits, where $D$ is independent of the input.

 It is not hard to see that transducer  functions are continuous; we present a proof in Proposition ~\ref{cont}.
The problem of characterizing transducer functions, stated in (Q.2) above, has been central to this topic since the beginning.
 Muller \cite{MullerTransducer} showed that if $f$ is transducer and $f'$ is piecewise continuous, then $f$ must be piecewise linear with rational parameters.  Lisovik and Shkaravskaya \cite{Lisovik1} extended this result to the case in which $f'$ exists but is not necessarily piecewise continuous and, more recently, Konecny \cite{Kon} extended these results to functions in higher dimensions. Nonetheless, it is not hard to produce complex examples of transducer functions, including  nowhere differentiable ones; see, e.g., Theorem 4 of \cite{Lisovik1} and the example in \cite{vardi}. As it stands, the classification problem (Q.2)
for transducer functions remains unresolved. Formal definitions and further discussion will appear in Section \ref{sec:transducer}.

\smallskip

\noindent {\bf Regular functions.} Chaudhuri, Sankaranarayanan, and Vardi \cite{vardi} initiated the notion of regular real analysis using an adaptation of the FSA. As mentioned above, we have to consider a version of the FSA which works with infinite words. They suggested using nondeterminisitic B\"{u}chi automata to define the related notion of a regular function.

Informally, a function $f\colon [0,1]\rightarrow \mathbb{R}$ is regular if there exists a B\"{u}chi automaton on two tapes which accepts the graph of $f$; that is, there is a finite state memoryless machine (an automaton) which simultaneously reads two (say, binary) representations of two reals $x$ and $y$; we have $f(x) =y$ if and only if the automaton visits the accepting state infinitely often. Although regularity does not even imply continuity in general (see \cite{vardi}), one can show, using results from \cite{vardi}, that every regular \emph{continuous} function is in fact computable. In recent work, Block Gorman~et al. \cite{reeeg} proved that a continuous regular function must be locally linear (in fact, locally $\mathbb{Q}$-affine) outside a measure zero nowhere dense set.
But even for continuous regular functions, the  classification problem for regular functions raised in \cite{vardi} and stated in (Q.1) above remains unresolved. See Section \ref{reg:prelim} for formal definitions and further discussions.  

\smallskip

We are ready to state the first main result of the present article. Although our results are not really sensitive to the choice of base, for simplicity we will fix the standard binary representation of reals. Our first result shows that the classification problems for continuous regular and transducer functions (that is, (Q.1) and (Q.2)) are equivalent.

\begin{theorem}\label{thm:kuku}
Suppose $f\colon [0,1]\rightarrow [0,1]$. The following are equivalent with respect to the standard binary representation:
\begin{enumerate}
\item $f$ is continuous regular.
\item $f$ can be computed by a nondeterministic transducer.
\end{enumerate}
\end{theorem}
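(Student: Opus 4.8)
The plan is to establish the two implications separately. Throughout I would first fix conventions for the standard binary representation so that dyadic rationals are handled uniformly (say, by forbidding representations ending in $1^\infty$ and treating the two interval endpoints as finitely many special cases); this makes representations an essentially bijective coding of $[0,1]$, and the resulting bookkeeping is always finite-state. Writing $[w]$ for the dyadic subinterval of $[0,1]$ coded by a finite binary word $w$, the only nontrivial analytic input I expect to need is the uniform continuity of $f$ on the compact interval $[0,1]$.

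For $(2)\Rightarrow(1)$: given a nondeterministic transducer $\mathcal{T}$ with delay bound $D$ computing $f$, I would build a two-tape B\"uchi automaton $\mathcal{A}$ that reads a candidate pair of representations $(x,y)$ synchronously, runs $\mathcal{T}$ on the $x$-tape while guessing its nondeterministic choices, buffers $\mathcal{T}$'s emitted bits, and checks them off against the $y$-tape. Since $\mathcal{T}$ is finite-state and stays within delay $D$, this buffer has bounded length, so $\mathcal{A}$ is a genuine finite automaton; taking as accepting the configurations in which the buffer is empty, $\mathcal{A}$ visits an accepting state infinitely often exactly when $\mathcal{T}$, along that run, emits all of $y$. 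Hence $\mathcal{A}$ accepts $(x,y)$ iff some run of $\mathcal{T}$ on $x$ outputs $y$, i.e.\ iff $y=f(x)$; continuity of $f$ is Proposition~\ref{cont}, so $f$ is continuous regular.

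For $(1)\Rightarrow(2)$, the interesting direction: let $\mathcal{A}$ be a two-tape B\"uchi automaton with state set $Q$ recognizing the graph of the continuous function $f$. I would build a nondeterministic transducer $\mathcal{T}$ that reads a representation of $x$ bit by bit and \emph{guesses} a representation of $f(x)$ bit by bit, at (say) one output bit per input bit, verifying its guesses against $\mathcal{A}$ as it goes. Concretely, after reading the input prefix $u$ and having emitted the guessed prefix $v$ with $|v|=|u|$, the finite memory of $\mathcal{T}$ is the ``macro-state'' consisting of all $q\in Q$ reachable in $\mathcal{A}$ by a partial run on $(u,v)$ from which $\mathcal{A}$ can still accept; on reading the next input bit and guessing the next output bit, $\mathcal{T}$ updates the macro-state, and the branch dies once it becomes empty. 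Because $\mathcal{A}$ recognizes the graph of a \emph{function}, non-emptiness of the macro-state at every stage forces the shrinking dyadic intervals $[v]$ to meet $f([u])$; as $u\to x$, uniform continuity gives $f([u])\to\{f(x)\}$, so any surviving run's output converges to a representation of $f(x)$. Conversely, the run that at each stage guesses the true next bit of the representation of $f(x)$ never empties its macro-state---the accepting run of $\mathcal{A}$ on $(x,f(x))$ witnesses this---so $\mathcal{T}$ does output $f(x)$. Thus $\mathcal{T}$ computes $f$, with delay $0$.

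The step I expect to be the main obstacle is not a deep structural fact about regular functions but the careful matching of the various definitions: verifying that the guessing transducer's input--output relation is \emph{exactly} the graph of $f$ under the fixed representation---with the dyadic-rational and endpoint cases behaving correctly in both directions, so that one really gets the graph and not merely a superset---and checking that the finite-delay model is rich enough for the argument (here the startup bound $D$ costs nothing, since the guessing transducer begins emitting immediately). Threading this bookkeeping cleanly through both simulations, together with extracting the ``$\mathcal{A}$ can still accept from $q$'' predicate on states as a decidable, precomputable property, is where the real work lies.
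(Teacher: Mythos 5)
The fatal move is in your first paragraph: you propose to make the representation essentially injective by forbidding strings ending in $1^\infty$ and treat what remains as bookkeeping. The paper (Section~\ref{sec:representation}) deliberately rejects exactly that convention, and under it the theorem is actually \emph{false}. Take the tent function $f(x)=\min(x,1-x)$, which is continuous and regular by Lemma~\ref{cor:piecewiselinearregular}. Feed a hypothetical transducer the input $\alpha=1\,0^\omega$, i.e.\ $x=\tfrac12$, so $f(x)=\tfrac12$ with unique admissible name $1\,0^\omega$. With delay $D$, the transducer emits output bit $k$ after reading only $\alpha\upharpoonright(D{+}k{+}1)=1\,0^{D+k}$, which is also a prefix of each $\alpha^{(n)}=1\,0^{n-1}1\,0^\omega$ with $\overline{\alpha^{(n)}}>\tfrac12$; on $\alpha^{(n)}$ the only admissible output is $0\,1^{n}\,0^\omega$, so for every $n$ there is a finite run, common to $\alpha$ and $\alpha^{(n)}$, emitting the prefix $0\,1^{\,n-D-1}$. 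By K\"onig's lemma these give an infinite run on $\alpha$ whose output begins with $0$, which is not an admissible name for $\tfrac12$ --- contradiction. So under your convention $f$ is continuous regular but \emph{not} transducer-computable. The two names of a dyadic rational are not a nuisance to be legislated away but the very phenomenon the theorem turns on; this is why the paper keeps the non-injective standard representation and builds the $\{L,R,E\}$-tracking. Your macro-state also never empties on the run emitting the forbidden $0\,1^\omega$: at every stage there is a nearby $\alpha'>\tfrac12$ with $f(\overline{\alpha'})$ just below $\tfrac12$ witnessing productivity.

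Once you restore the paper's convention, your $(1)\Rightarrow(2)$ macro-state construction becomes a genuinely attractive alternative to Proposition~\ref{reg->tr}: the productive-states subset construction plus the continuity argument does show that every surviving run names $f(x)$ (now that any $\beta$ with $\overline{\beta}=f(x)$ is acceptable), and the accepting run of $\mathcal{A}$ on $(\alpha,\beta)$ witnesses that some run survives. But your $(2)\Rightarrow(1)$ buffer automaton still leaks under the paper's convention: it accepts $(\alpha,\beta)$ only when $\beta$ literally equals, as a string, some output of $\mathcal{T}$ on $\alpha$, whereas regularity requires accepting \emph{every} $(\alpha,\beta)$ with $f(\overline{\alpha})=\overline{\beta}$, including the ``twin'' name $\gamma\,0\,1^\omega$ of a dyadic value that $\mathcal{T}$ only ever emits as $\gamma\,1\,0^\omega$ (or vice versa). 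You must let the automaton spot the single position where the simulated output and the $y$-tape diverge and then verify they continue as complementary constant tails; that is precisely the $E/L/R$ bookkeeping in the paper's proof of (ii)$\Rightarrow$(i) in Theorem~\ref{thm:kuku1}.
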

This result contrasts with the results in \cite{BKN:08} which show that there exist automatic algebraic structures which do not have transducer presentations;
for a further discussion, see Remark~\ref{rem:makingtherefhappy}.
Furthermore, in Theorem~\ref{thm:kuku1} we will show that the answers to (Q.1) and (Q.2) are equivalent to saying that $f$ can be computed by a deterministic transducer for the \emph{signed binary representation} (to be defined in Def.\ \ref{def:sbr}). In other words, the \emph{only} difficulty in determinization is that we cannot predict whether a long sequence of 1s will ever terminate in 0 or not. Signed binary representations can be computed from binary representations by a transducer, but not vice versa. We also show that this difficulty is essential by producing an example of a nondeterministic transducer function which cannot be computed by a deterministic transducer; see Prop.~\ref{determcounterexample}.

Theorem~\ref{thm:kuku} explains why some proofs for regular functions that can be found in \cite{vardi,reeeg} are quite similar  to proofs of the analoguous results for transducer functions in~\cite{Lisovik1,MullerTransducer}. For instance,  the following theorem follows from the aforementioned main result in \cite{Lisovik1} and Theorem \ref{thm:kuku}.

\begin{theorem}[Block Gorman et al.~\cite{reeeg}]
 For a function $f\colon  [0,1] \rightarrow \mathbb{R}$, the following are equivalent:
\begin{enumerate}
\item   $f$ is regular and differentiable;

\item there exist $r, q \in
\mathbb{Q}$ such that $f(x) = rx +q$.
\end{enumerate}
\end{theorem}

It follows that (Q.1) is completely reduced to (Q.2).
To attack (Q.2) using computability-theoretic tools we need some background on index sets.

\subsection{Classification via index sets} We begin with local effective classifications; let us first consider the classification problem for real numbers with property $P$. We may initially analyze the subproblem for computable real numbers: a real $x$ is computable if there exists an algorithm which, on input $n$, outputs a rational number $r$ such that $|x -r|< 2^{-n}$ \cite{Turing:36,Turing:37}. We can therefore list all computable reals $x_0,x_1,\ldots$ effectively. More precisely, we can effectively list all (possibly partial) algorithms computing approximations to computable reals. 
This allows us to define the complexity of $P$ within the arithmetic hierarchy by restricting its domain to computable reals and considering the \emph{index set} of $P$:
$$I(P)=\{ i \st \mbox{$x_i$ satisfies $P$}\}.$$
For instance, if $P(x_i)$ holds if and only if $(\exists n_1)(\forall n_2)R(i,n_1,n_2)$ for some computable relation $R$, then we can say that $P$ is $\Sigma^0_2$, and if $I(P)$ is $\Sigma^0_2$-complete, then we can say that our description of $P$ is optimal. See~\cite{Soa} for more on the classes $\Sigma^0_n$ and $\Pi^0_n$, which together form the arithmetical hierarchy.
Of course, if we want to consider an analytic $P$ for noncomputable reals as well, we will relativize our restatement of $P$ to an arbitrary oracle $X$; consider the case in which $P(\xi)$ holds if and only if $(\exists n_1)(\forall n_2)\widehat{R}(\xi;n_1,n_2)$, where $\widehat{R}$ is a computable predicate with the real parameter $\xi$. In this example, $P$ is $\mathbf{\Sigma^0_2}$; see~\cite{GaoBook} for the boldface hierarchy and its applications to algebraic structures. We note here that usually establishing a ``lightface'' bound requires a more detailed and constructive analysis than providing a ``boldface'' bound. This sort of work has been done by, for instance, Becher, Heiber, and Slaman, who showed that the index set of all computable real numbers normal to base 2 is $\Pi^0_3$-complete \cite{norm1}. Their proof  is relativizable and implies the earlier result of Ki and Linton, who showed that the set of reals normal  to base 2 is $\bf \Pi^0_3$-complete \cite{ki}. 
 For more examples see, e.g., \cite{norm2,CRindex}.

 The index set approach can be naturally extended to elements of an arbitrary computable Banach space $\mathcal{B}$ as follows. A computable presentation (of a computable structure)~\cite{PourElRich, Brattka.Hertling.ea:08}  of a Banach space $\mathcal{B}$ is a linearly dense sequence $\{v_n\}_{n \in \mathbb{N}}$ of vectors such that $||v_i||$ is a computable real uniformly in $i$, and we say that a space is computable if it admits a computable presentation. 
 A point $\xi \in \mathcal{B}$ is computable with respect to the given computable presentation if there exists an algorithm which, on input $n$, outputs an $i$ such that $d(\xi, v_i)< 2^{-n}$.  As before, we can list all computable points in the space effectively, which enables us to apply the index set technique for elements of the space. In this article, we will use the local algorithmic approach to study special classes of functions in $C[0,1]$. As far as we know, Westrick \cite{Westr} was the first to apply this approach to study classes of functions in $C[0,1]$, but index sets have never been used to study feasible computability of continuous functions.

\medskip

We are now ready to state the second main result of the present article.
It is not difficult to see that transducer functions $[0,1] \rightarrow \mathbb{R}$ map rationals to rationals in linear time (see Fact~\ref{linearfact}); we call such functions \emph{pointwise linear time}.
By combining the strongest known results about transducer and continuous regular functions in the literature, we can conclude that such functions are Lipschitz~\cite{vardi} and are locally linear outside of  a measure zero nowhere dense set~\cite{reeeg}; we call the latter property \emph{almost linearity}. While these properties are surely strong, it is not obvious that they help to  classify regular functions among pointwise linear time functions.
We prove the following:


\begin{theorem}\label{thm:mainreg}
Given a pointwise linear-time computable, almost linear Lipschitz $f\colon  [0,1] \rightarrow \mathbb{R}$ with $f(\mathbb{Q} \cap [0,1]) \subseteq \mathbb{Q}$,
checking whether $f$ is transducer (equivalently, regular) is a $\Sigma^0_2$-complete problem.
\end{theorem}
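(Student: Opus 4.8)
The plan is to prove the two directions of the completeness statement separately: the upper bound ($\Sigma^0_2$ membership) and the lower bound ($\Sigma^0_2$-hardness).

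\medskip

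\textbf{Upper bound.} First I would argue that, given the index of a pointwise linear-time, almost linear, Lipschitz function $f$ sending rationals to rationals, the predicate ``$f$ is transducer'' is $\Sigma^0_2$. The natural guess is ``there exists a (nondeterministic, finite-delay) transducer $M$ such that $M$ computes $f$'', where the outer quantifier ranges over the (countable, effectively enumerable) set of finite transducers, and the inner part ``$M$ computes $f$'' needs to be shown $\Pi^0_1$ relative to the data. This is where the structural hypotheses do the work: because $f$ is Lipschitz and almost linear with rational slope and intercept data, and pointwise linear time on rationals, a correct transducer for $f$ can be taken to have size and delay bounded in terms of these parameters, and more importantly the assertion that a candidate $M$ agrees with $f$ on all inputs reduces to checking agreement on a computable dense set (the rationals, where $f$ is computed in linear time) together with a uniform continuity/Lipschitz modulus — both of which are $\Pi^0_1$ conditions once the finitely many parameters of $f$ are known. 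I would isolate as a lemma the statement that ``$M$ computes $f$'' is $\Pi^0_1$ uniformly in an index for $f$ (as a pointwise-linear-time almost-linear Lipschitz function) and an index for $M$; summing over $M$ then gives $\Sigma^0_2$. Theorem~\ref{thm:kuku} lets us pass freely between ``transducer'' and ``continuous regular'', so the bound covers both formulations.

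\medskip

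\textbf{Lower bound.} For $\Sigma^0_2$-hardness I would fix a $\Sigma^0_2$-complete set, say $\mathrm{Cof} = \{e : W_e \text{ is cofinite}\}$ or equivalently $\{e : (\exists n)(\forall m > n)\, m \in W_e\}$, and build a uniformly computable family $e \mapsto f_e$ of functions, each pointwise linear-time computable, almost linear, and Lipschitz with rational-to-rational behaviour, such that $f_e$ is transducer if and only if $e \in \mathrm{Cof}$. The idea is to encode the enumeration of $W_e$ into the local behaviour of $f_e$ on a sequence of shrinking dyadic subintervals $I_0 \supseteq$ (disjoint pieces) $I_1, I_2, \ldots$ converging to a point: on $I_n$, make $f_e$ a small ``bump'' or a piecewise-linear perturbation whose combinatorial type records whether $n$ has entered $W_e$ and at what stage. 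If only finitely many $n$ are missing from $W_e$ (i.e. $e \in \mathrm{Cof}$), then $f_e$ is eventually linear near the limit point and of a uniform, periodically-describable shape, hence transducer; if infinitely many $n$ are missing, the perturbations recur at arbitrarily fine scales in a non-self-similar, aperiodic way that no finite-state transducer can track, so $f_e$ is not transducer. I would use the known rigidity results quoted in the excerpt (regular/transducer functions are linear off a measure-zero nowhere-dense set, and the Lisovik--Shkaravskaya-type dichotomy) in the contrapositive to certify non-transducerness: arrange that a non-transducer $f_e$ still satisfies almost linearity (the perturbed set is nowhere dense and null) and Lipschitzness, so it lies in the hypothesis class and the reduction is genuinely to the stated restricted problem.

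\medskip

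The main obstacle I anticipate is the non-transducer direction of the lower bound: one must design the perturbations so that they are simultaneously (i) benign enough to keep $f_e$ Lipschitz, pointwise linear-time, and almost linear no matter what $W_e$ does — so the target is always in the hypothesis class, which is what makes the result sharp — and (ii) ``anti-periodic'' enough that the presence of infinitely many gaps provably defeats every finite transducer. Proving (ii) requires a genuine automata-theoretic pumping argument: one shows that a finite transducer computing $f_e$ on the binary representation would, by a pigeonhole/Myhill--Nerode argument across the infinitely many distinct bump-scales, be forced into a periodic pattern incompatible with a gap appearing later, contradicting either correctness or the finiteness of the state set. Getting the coding fine-grained enough to survive the finite-delay allowance $D$ (which lets the transducer ``look ahead'' a bounded amount) while still keeping the family uniformly computable and within the hypothesis class is the delicate engineering; the upper bound, by contrast, I expect to be comparatively routine once the parametrization of the hypothesis class is pinned down.
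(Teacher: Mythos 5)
Your two-part plan (upper bound by quantifying over transducers, lower bound by encoding an infinitary $\Pi^0_2$ predicate into shrinking dyadic bumps) matches the paper's architecture, but both halves miss a point that the paper relies on.

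For the upper bound you argue that the structural hypotheses (Lipschitz, almost linear, pointwise linear time) are what make ``$M$ computes $f$'' a $\Pi^0_1$ condition, and you suggest they bound the size and delay of a correct transducer. Neither is needed, and the size-bounding claim is not established by those hypotheses. The paper's argument is cleaner: by Theorem~\ref{thm:kuku1} one may quantify over \emph{deterministic} signed-binary transducers, and for each such $M$ the statement ``$M$ computes $f$'' reduces to ``for every rational $q$, $f(q)=M(\sgnd{q})$ as reals,'' a $\Pi^0_1$ condition because equality of computable reals is $\Pi^0_1$. This works for any total computable $f$; the hypotheses are irrelevant to the upper bound. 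They matter only for sharpness of the hardness result.

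For the lower bound the serious gap is in how you certify that the perturbed function is not transducer when infinitely many bumps occur. You propose a Myhill--Nerode/pumping argument ``across the infinitely many bump-scales'' but do not say what invariant the pumping violates, and the surviving obstacle you flag (the delay $D$, keeping the family in the hypothesis class) is precisely where the sketch runs out. The paper's key idea is much more concrete and is number-theoretic rather than state-counting: a finite transducer maps each base-$b$ dyadic $d$ to a value $d'+b^{-|d'|}r$ where $r$ ranges over a \emph{finite} set $C$ of rationals determined by the machine (this is the quantitative form of Proposition~\ref{Q->Q}/Fact~\ref{linearfact}). By choosing the bump at scale $n$ to have height exactly $2^{-2n-1}$, infinitely many bumps force $f$ to take values whose reduced denominators contain arbitrarily large powers of $2$, which no finite set $C$ can produce; a short divisibility case analysis on $b=2^{\ell}L$ finishes it and even shows $f_z$ is not $b$-regular for \emph{any} base $b$. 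Your proposal to make the bump's ``combinatorial type record whether $n$ has entered $W_e$ and at what stage'' would also break the positive direction: if the shape depends on a stage number, then even when $e\in\mathrm{Cof}$ the function is not a finite piecewise-linear function with dyadic breakpoints, and the easy appeal to Lemma~\ref{cor:piecewiselinearregular} is lost. The paper instead places a bump of a \emph{fixed canonical shape} (scaled tent) exactly when $R(z,n)$ fires and nothing else, which is both enough to defeat every transducer in the infinite case and trivially regular in the finite case. Finally, to keep $f_z$ pointwise linear time uniformly in $z$ and in the input length, the paper slows the approximation of $R$ so that $R(z,n)$ is decidable in $O(n)$ steps and proves a dedicated claim (\ref{claim:kukudelta}) that membership in the interval family $J_n$ is decidable in linear time; you flag this as ``delicate engineering'' but the delicacy is exactly the choice $J_n=\bigl(2^{-n}-2^{-2n},\,2^{-n}+2^{-2n}\bigr)$, whose endpoints are recognizable by a constant number of scans.
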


We explain how the framework of index sets is used to formally clarify the statement of  Theorem~\ref{thm:mainreg}.  Let $P$ be the property of being transducer or regular continuous in binary, and fix a uniform enumeration of all linear time computable functions $(l_e)_{e \in \omega}$ of the space.
Let $Q$ denote the conjunction of ``being Lipschitz'', ``mapping $\mathbb{Q}$ to $\mathbb{Q}$'', and ``being almost linear''.
We fix a $\Sigma^0_2$-complete set $S$ and  prove $\Sigma^0_2$-completeness of $I(P) = \{e\st  l_e \mbox{ satisfies } P \}$ via
$$e \in S \iff  g(e) \in I(P), $$
where $g(e)$ \emph{always} has property $Q$ regardless of whether $ P(l_e)$ or $\neg P(l_e)$ holds.  Thus, $\Sigma^0_2$-completeness measures the complexity of the classification problem for $P$ relative to/given $Q$.

How good is this upper bound $\Sigma^0_2$ in Theorem~\ref{thm:mainreg}? It is not difficult to see that, for an everywhere defined computable $f\colon [0,1] \rightarrow \mathbb{R}$,  ``$f$ is regular'' is a $\Sigma^0_2$-statement (to be explained in more detail). It follows that even if an interesting analytic characterisation of regular functions exists, it will not allow us to simplify the determination of whether a given (total) computable function is regular. Indeed, from the algorithmic point of view, any such characterisation will not be simpler than the brute force definition, which  is not even decidable relative to the halting problem. Furthermore, even assuming the strong properties---almost linear, Lipschitz,  $f(\mathbb{Q} \cap [0,1]) \subseteq \mathbb{Q}$, and linear time---we cannot reduce the complexity.  According to the index set approach, none of these properties  help to reduce the complexity of deciding whether a function is regular.
We will further discuss what this tells us about (Q.1) and (Q.2)  in the brief conclusion.

While the first main result (Theorem~\ref{thm:kuku}) is essentially purely automata-theoretic-analytic, Theorem~\ref{thm:mainreg} blends automata-theoretic results with techniques from computable analysis. Using the machinery developed for (and in) the proof of 
Theorem~\ref{thm:kuku} to do most of the heavy lifting, we will produce a rather compact and hopefully transparent proof of Theorem~\ref{thm:mainreg} that is essentially computability-theoretic rather than automata-theoretic.



\subsection{Classifying $C[0,1]$ among separable Banach spaces} Finally, we reach our result on the global classification problem for $C[0,1]$. For this result, we abandon automata theory and instead will use local definability techniques and oracle computability combined with \emph{global} index sets.

Now we discuss global classifications: whereas we consider the individual points in the space in a local classification problem, here we consider the space as a whole and attempt to characterize the space (or even a class of spaces) within some larger class.
Fix a uniformly computable list of all linear spaces over $\mathbb{Q}$ and all potential partial computable norms on these spaces. This gives us a uniformly computable enumeration of (partial) computable normed linear spaces $B_0, B_1, \ldots$. 
Then the characterization problem for a property $P$ of a separable Banach space 
is the (complexity of) the index set
$$I(P) = \{i \st \overline{B_i} \mbox{ has property }P\}.$$
The basic idea comes from computable structure theory, where the method of index sets has become standard; see, e.g., \cite{GK:02, DoMo,KnMcind}. A similar approach had been successfully applied in the classes of Polish spaces and Polish groups previously (see \cite{CompComp,NSlocal,Pontr}), but 
this method is still new for Banach spaces and needs to be investigated.

For example, suppose we define $P$ to be ``being a Hilbert space". Determining whether $\overline{B_i}$ admits an inner product is naively $\Sigma^1_1$, however, it is well known that $\overline{B_i}$ is a Hilbert space if and only if it obeys the parallelogram law, which only has to be checked for points in the dense linear subspace. Therefore, its index set is merely $\Pi^0_1$.  The situation is much more complicated if we consider Lebesgue spaces instead of Hilbert spaces. It seems that checking whether $ \overline{B_i} \cong L^p(\Omega)$ requires searching for a real $p$, a measure space $\Omega$, and an isomorphism to $L^p(\Omega)$.
Nonetheless, Brown, McNicholl and Melnikov~\cite{leb} have recently discovered that Lebesgue spaces admit a local description in terms of a certain new notion of independence. This analysis allows us to reduce the complexity of the index set from  $\Sigma^1_1$ down to $\Pi^0_3$, a considerable difference. 
As usual, this result can be relativized to any oracle, and therefore they are not restricted to computable Lebesgue spaces.

The main problem considered in Theorem  \ref{thm:3} is:
 How hard is it to tell that a given Banach space is (linearly) isometrically isomorphic to $C[0,1]$? More formally, what is the complexity of the set $\{i\st  \overline{B_i} \cong C[0,1]\}$ (Q.3)? The crude upper bound is $\Sigma^1_1$. Our challenge is to present a better bound.
Melnikov and Ng~\cite{MelNg} showed that  $C[0,1]$ admits computable presentations which are not computably isomorphic, and that therefore we should not expect an elementary solution. This question (in its  equivalent form) has recently been attacked by Brown \cite{BrownThesis}. Brown found an arithmetical upper bound, but he had to extend the signature of Banach spaces to get it. The last main result of the present article gives a surprisingly low upper bound:


 \begin{theorem}\label{thm:3}
The index set of $(C[0,1], || \cdot ||_{sup}, +, (r\cdot)_{r \in \mathbb{Q}})$ among all computable Banach spaces  is arithmetical.
\end{theorem}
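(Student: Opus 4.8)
The plan is to characterize, via a purely local (arithmetical) condition on the dense subspace $\{v_n\}$ of a computable Banach space $\overline{B_i}$, when that space is isometrically isomorphic to $C[0,1]$, and then argue that verifying this condition lies at some fixed level of the arithmetical hierarchy. The natural tool is the classical Banach–Mazur theorem that $C[0,1]$ is isometrically universal among separable Banach spaces, together with an isometric characterization of $C(K)$-spaces. In fact the cleanest route is to use that $C[0,1] \cong C(K)$ for $K$ a metrizable compact space with no isolated points (all such $C(K)$ are mutually isometrically isomorphic), plus a lattice-theoretic or Choquet-simplex-style characterization of $C(K)$ among Banach spaces. The first step is therefore to fix such a characterization in a form that refers only to quantification over the countable dense set: for instance, ``$\mathcal{B}$ is a $C(K)$ space'' is captured by an $M$-ideal / $L^1$-predual condition, or by the existence of a compatible Banach-lattice structure satisfying Kakutani's axioms for an abstract $M$-space with unit; and ``$K$ is perfect'' is an additional density-type condition. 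I would select whichever of these admits the lowest-complexity local rephrasing.

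Second, I would build, by oracle computation in stages, a candidate Schauder basis of $\overline{B_i}$ — this is exactly the ``special arithmetical notion of independence for finite sets of functions'' alluded to in the introduction. Concretely, one approximates finite $\varepsilon$-independent tuples from $\{v_n\}$, where the relevant notion of independence is arithmetically checkable because norms of rational combinations of the $v_n$ are uniformly computable; one then verifies that the resulting system is a monotone (or at least Schauder) basis, again an arithmetical condition once the basis constant is fixed. Having a basis lets one replace ``there exists an isometry onto $C[0,1]$'' — prima facie a $\Sigma^1_1$ quantifier over maps — by an internal structural check, since one only needs the candidate lattice/$M$-space operations to be defined and consistent on a dense set, and consistency of finitely many identities among finitely many rational combinations is arithmetical. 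The key point throughout is that every genuinely infinitary quantifier (``for all points'', ``there exists an isometry'') collapses to a quantifier over natural number indices for rational combinations of the $v_n$, with a computable (or low-in-the-hierarchy) matrix, because of uniform computability of norms.

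Third, I would assemble the pieces: $\overline{B_i} \cong C[0,1]$ iff (a) $\overline{B_i}$ carries an abstract $M$-space-with-unit structure compatible with its norm — an arithmetical condition on rational combinations of the $v_n$ — and (b) the associated compact space $K$ (which need not be produced explicitly) is perfect, equivalently $\overline{B_i}$ has no nonzero element whose support is ``isolated'', which is again a density statement over the $v_n$. Counting quantifiers in the resulting formula gives a fixed $\Sigma^0_n$ or $\Pi^0_n$ bound — I would not try to optimize $n$ beyond showing it is finite, since the theorem only claims ``arithmetical.'' As with the Lebesgue-space result of Brown–McNicholl–Melnikov, the entire argument relativizes, so it is not restricted to computable presentations.

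The main obstacle I anticipate is step one: finding an isometric characterization of $C[0,1]$ (equivalently of perfect metrizable $C(K)$) that is simultaneously (i) correct, (ii) expressible using only quantification over the dense sequence and its rational span, and (iii) arithmetical rather than hyperarithmetical. The lattice-structure route is delicate because the lattice operations are extra data not present in the Banach signature, so one must show the lattice structure, if it exists, is essentially unique and can be recovered arithmetically from the norm (via the $M$-space identity $\|x \vee y\| = \max(\|x\|,\|y\|)$ for disjoint-ish elements, or via metric characterizations of lattice operations à la Kakutani); making this recovery effective and bounding its complexity is the crux. A secondary difficulty is verifying the ``perfect $K$'' clause without constructing $K$, which I expect to handle by translating ``$K$ has an isolated point'' into ``$\overline{B_i}$ has a nonzero $L$-summand that is one-dimensional,'' a condition one can then negate and express arithmetically over finite rational combinations.
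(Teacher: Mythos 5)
Your proposed characterization rests on the claim that ``all $C(K)$ for $K$ compact metrizable perfect are mutually isometrically isomorphic,'' but this is false: by the Banach--Stone theorem, $C(K)$ and $C(L)$ are linearly isometrically isomorphic if and only if $K$ and $L$ are homeomorphic, so for example $C[0,1]$ and $C(2^{\omega})$ (the Cantor set) are \emph{not} isometrically isomorphic. What you are invoking is Milutin's theorem, which gives an isomorphism (not isometric) for all uncountable compact metrizable $K$; the paper's Theorem~\ref{thm:3} is, by its own convention, about linear \emph{isometric} isomorphism. As a result, the biconditional you assemble at the end --- $\overline{B_i}\cong C[0,1]$ iff (a) an abstract $M$-space-with-unit structure exists and (b) the spectrum $K$ is perfect --- picks out a strictly larger class: it would wrongly accept presentations of $C(2^{\omega})$, $C(2^{\omega}\times[0,1])$, etc. To repair this you would need an additional arithmetical clause pinning down that the Gelfand spectrum is homeomorphic to $[0,1]$ \emph{specifically}, and expressing that through the norm-only signature on the dense sequence is exactly where the difficulty lies; it is not addressed by your steps (a)--(b) nor by your ``no one-dimensional summand'' test for perfectness.

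The paper avoids the Kakutani route entirely and handles the $K\cong[0,1]$ issue head-on. It recovers, arithmetically from the presentation, the constant function $1$ (using $\mathbf{0'}$), the lattice operations $f^+$, $f^-$, $\max$, $\min$ (using $\mathbf{0''}$), and from these the tooth functions $t_{p,q,r}(g)$ for any nonconstant $g$. The pivotal observation is that, via Banach--Stone, the rational span of $\{t_{p,q,r}(\hat g)\}$ is dense precisely when the normalization $\hat g$ is a self-homeomorphism of $[0,1]$, i.e.\ when $g$ is strictly monotone; this is a genuinely $[0,1]$-specific condition that fails in every other $C(K)$. The proof then shows, by a ball-nesting construction driven by a $\Pi^0_4$ test for ``this ball contains a strictly monotone function,'' that such a $g$ can be produced $\Delta^0_5$-effectively whenever the presentation really is of $C[0,1]$, yielding a $\Delta^0_5$ isomorphism and hence an arithmetical index set. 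Your instincts about recovering lattice structure arithmetically and about collapsing the $\Sigma^1_1$ quantifier over isometries to an internal density condition over the rational points do match the spirit of the paper's argument, but the isometry/isomorphism confusion at the foundation of your characterization is a genuine gap that would make the resulting index set wrong, not merely suboptimally bounded.
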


More specifically, we show that the upper bound can be improved from $\Sigma^1_1$ to arithmetical, namely $\Sigma^0_7$. We conjecture that with some extra work, this upper bound can be improved to $\Pi^0_6$, but we do not know whether this is sharp. We leave these as open problems. The key ingredients of our proof are the notion of independence introduced by Brown \cite{BrownThesis} combined with a \emph{new definability technique}.
 In particular, the initial results on the computational strength of the auxiliary functions required appear in \cite{BrownThesis}, though we provide different proofs here. 
We use this analysis
to dynamically approximate (an isomorphic image of) the standard basis of the space consisting of tooth functions with rational breaking points. Another key idea in the proof is that one can  approximate a strictly monotonic function in $C[0,1]$ using only a few Turing jumps\footnote{We thank Alec Fox who pointed out that the original write-up of our proof was incomplete.}.
 
 In contrast with 
our previous theorem about transducer (regular) functions,
Theorem~\ref{thm:3} is a positive result. 
It follows that the global characterization problem for $C[0,1]$ admits a first-order characterisation based on local analysis of some individual elements of the space. This characterization is similar to that of Hilbert spaces and Lebesgue spaces; this is a peculiar property which may prove useful in the future.
See the brief conclusion (Section~\ref{concl}) for further discussion and explanation.

\medskip

The remainder of the article is structured as follows: We begin with a summary of the necessary results on regular functions in Section \ref{sec:regfns}; these results are often incompletely given in the literature, so we present full proofs here both for the sake of completeness and because we will use the notation and methods later on. (We also suspect that most potential readers of this article will be experts on computable analysis rather than automata theorists.)
 Then we devote a section to each of our three main results in turn: Section~\ref{sec:kuku} contains a proof of Theorem \ref{thm:kuku}, and  Section \ref{sec:mainreg} contains a proof of Theorem \ref{thm:mainreg}, which blends the automata-theoretic and analytic methods developed in the previous sections with elements of computable analysis. 
A bit more computable analysis is needed for the proof of  Theorem \ref{thm:3}; it is contained in Section \ref{sec:thm:3}. This section also contains the very little extra background that is needed for the proof. Finally, Section~\ref{concl} is a short conclusion that further clarifies 
what these index set results tell us about (Q.1)--(Q.3), and it also contains several questions that we leave open.

\section{Background on regular functions}\label{sec:regfns}

In this section we state and prove several known results which are scattered throughout the literature. Some of these results were only published in conference proceedings and thus there are no complete detailed proofs in the literature, which we provide for the sake of completeness. Some of the notation and methods introduced in these proofs will be useful in the proofs of our main results.

\subsection{Representing the real numbers}\label{sec:representation}
We now address the elephant in the room - the issue of representing real numbers. Our task is to study the automatic real analysis of continuous real-valued functions.
In order to avoid having to represent the sign and the integer part of a real number we will focus only on functions of the form $f\colon  [0,1]\rightarrow [0,1]$. 
Since we only work with a finite alphabet it is natural to represent real numbers by considering surjective functions $j: k^\omega\rightarrow [0,1]$, where $k^\omega$ is a compact subset of the Baire space.

Since we will frequently be working with strings (both finite and infinite), we write $\alpha\subset\beta$ to mean that the string $\alpha$ is a strict prefix of the string $\beta$. If $\alpha$ and $\beta$ are two infinite strings, then $\alpha\oplus\beta$ is the standard interweaving of the two strings,  defined by $(\alpha\oplus\beta)(2n)=\alpha(n)$ and $(\alpha\oplus\beta)(2n+1)=\beta(n)$ for all $n$. If $\alpha$ is a (finite or infinite) string, and $m$ is an integer smaller than $|\alpha|$, we denote $\alpha\upharpoonright m$ to be its initial segment of length $m$, $\alpha(0)\alpha(1)\cdots\alpha(m-1)$.

For simplicity, we restrict ourselves to the different binary representations of reals. We note, though, that this is just a convenient simplification for the sake of exposition and our results are not really restricted to binary representations.
\begin{definition}[Binary representations of real numbers]\label{def:sbr}
For each finite binary string $\sigma$, let $\overline{\sigma}$ denote the dyadic rational represented by $\sigma$, i.e., $\overline{\sigma}=\sum_{s<|\sigma|} \sigma(s)2^{-s-1}$. For an infinite binary string $\beta$, let $\overline{\beta}$ denote the real number represented by $\beta$. This is the \emph{standard binary representation} of $[0,1]$.

If $\eta\in \{-1,0,1\}^{<\omega}$, let $\sgnd{\eta}=\sum_{s<|\eta|} \eta(s)2^{-s-1}$; similarly we define $\sgnd{\beta}$ for $\beta\in \{-1,0,1\}^{\omega}$. This is the \emph{signed binary representation} of $[0,1]$.
\end{definition}

These are two commonly used representations for the real numbers; the latter is well known to be a universal (admissible) representation. Both representations are clearly not injective; for instance, each dyadic rational has two representations with respect to the standard binary representation and three with respect to the signed binary representation. Chaudhuri, Sankaranarayanan, and Vardi~\cite{vardi} overcome this by disallowing binary representations that end with $1111\ldots$. This allows for a binary representation of $[0,1]$ which is injective (but not total), and therefore circumvents the issue of having to identify different representations when considering the mathematical object being computed by an automaton.

We find this convention somewhat unsatisfactory and therefore propose to consider the more (arguably) natural standard and signed binary representations. These representations are also well-studied in computable analysis, and reflects practical problems in online computing more closely.

 The following observation is trivial but useful: Let $\sigma$ and $\tau$ be two strings in $2^{n}$ such that $|\overline{\sigma}-\overline{\tau}|>2^{-n}$. Then for every $\alpha\supset \sigma$ and every $\beta\supset\tau$ such that $\alpha,\beta\in 2^\omega$, we have~$\overline{\alpha}\neq\overline{\beta}$. Furthermore, if $\eta$ and $\nu$ are two strings in $\{-1,0,1\}^n$ such that $|\sgnd{\eta}-\sgnd{\nu}|>2^{-n+1}$, then for every~$\alpha\supset \eta$ and every $\beta\supset\nu$ such that $\alpha,\beta\in \{-1,0,1\}^\omega$, we have $\sgnd{\alpha}\neq \sgnd{\beta}$.

\subsection{Regular functions}\label{reg:prelim} 

Recall that a \emph{B\"{u}chi automaton} is a tuple $\left(Q,\Sigma,\delta,q_0,F\right)$ such that $\Sigma$ is a finite non-empty set of symbols representing the set of input alphabets, $Q$ is a finite non-empty set of states, $q_0\in Q$ is the initial state, $\delta:Q\times\Sigma\rightarrow Q$ is the state-transition function, and $F\subseteq Q$ is the set of accepting states. A run is accepting if some state of $F$ appears infinitely often in the run. A nondeterministic B\"{u}chi automaton is one where the state-transition function $\delta$ is replaced by a multi-function. An input word is accepted by the B\"{u}chi automaton if there is an accepting run associated with the input.

\begin{definition}[Modified from \cite{vardi}]\label{def:regularbase2}
A function $f\colon  [0,1] \rightarrow [0,1]$ is \emph{regular to base 2} if there is a nondeterministic B\"{u}chi automaton  $A$ which accepts
the graph of $f$ with respect to the standard binary expansion. That is, $A$ accepts $\alpha\oplus\beta$ if and only if $f(\overline{\alpha})=\overline{\beta}$.\end{definition}
Note that our definition differs slightly from \cite{vardi} as they use the modified standard binary representation for both the input and the output (see the discussion in Section \ref{sec:representation}). In their definition, each real number in $[0,1]$ has a unique binary representation, whereas in our definition, the automaton must accept any (binary) representation for $x$ and for $y$.

To use only one tape we alternate binary bits  of $x$ and $y$:
$$ x_0y_0x_1y_1x_2y_2\ldots,$$
and then $f(x) = y$ if and only if the automaton visits its acceptance state(s) infinitely often while scanning this input.
%
Chaudhuri, Sankaranarayanan, and Vardi~\cite{vardi} studied regular functions with respect to bases other than binary. In the rest of this article, we will use ``regular" to refer to functions that are regular to base 2.

We point out some different terminologies used in the literature. In \cite{reeeg} the authors refer to the set of (infinite) strings \emph{accepted} by a B\"{u}chi automaton, and call a subset $A\subseteq\mathbb{R}^n$ \emph{recognized} by a B\"{u}chi automaton (with respect to base $r$) if the set of all words representing an element of $A$ to base $r$ is accepted by the B\"{u}chi automaton. In \cite{vardi} however the authors used \emph{accepted} for both words and sets of reals and functions, although they introduced the notion of being accepted by a function automaton.

In this article, we prefer to simplify the notation and will only refer to words being accepted by a B\"{u}chi automaton. Whenever we refer to the \emph{graph} of a function $f$, we always mean the set of all strings $\alpha\oplus\beta$ such that $f(\overline{\alpha})=\overline{\beta}$.

It may not be immediately apparent, but not every regular function is continuous:

\begin{example}\label{easy:exampledisc}
The function
\[f(x)=\begin{cases}
0&\text{if $0\leq x<\frac{1}{2}$},\\
1 &\text{if $\frac{1}{2}\leq x\leq 1$}
\end{cases}\]
is regular. To see this, consider the following B\"{u}chi automaton  $A$. We interpret the input stream as $\alpha\oplus\beta$ where $\alpha$ and $\beta$ are infinite binary strings encoding the binary expansion of the real numbers $x$ and $ y$ respectively. $A$ will accept iff $\alpha(0)=1,\beta=1^\omega$, or $\alpha=01^\omega,\beta=1^\omega$, or $\alpha(0)=0,\beta=0^\omega$ and $\alpha$ contains at least two $0$ bits.


In fact, any step function with finitely many breakpoints, all of which are dyadic rationals, and a dyadic range is regular. This is because an automaton can first scan $D$ many bits of $\beta$ (where $D$ does not depend on the input) to decide which interval $x$ must necessarily belong to. It then scans $x$ and accepts iff $x$ lies in the relevant dyadic interval and $y$ is in the range of the step function.
\end{example}

It turns out that \emph{continuous} regular functions possess several nice properties. The rest of this subsection will be devoted to understanding continuous regular functions. First, we begin with a lemma; for the sake of completeness we include its proof as well.

\begin{lemma}[Block Gorman et al~\cite{reeeg}]\label{lem:deterministic}
Suppose $f\colon  [0,1] \rightarrow  [0,1]$ is continuous and regular. Then $Graph(f)$ is accepted by a deterministic  B\"{u}chi automaton.
\end{lemma}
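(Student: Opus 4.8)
The plan is to determinize the nondeterministic B\"uchi automaton $A$ accepting $Graph(f)$ by exploiting continuity, rather than appealing to the general McNaughton/Safra construction (which would produce a Rabin or parity automaton, not a B\"uchi one). The key observation is that, because $f$ is continuous, the output $\beta$ is ``almost determined'' by the input $\alpha$: once we have read a long initial segment of $\alpha$, the value $f(\overline{\alpha})$ is pinned down to a small interval, so there is essentially no genuine nondeterminism about the output tape, only the bounded ambiguity coming from dyadic rationals having two binary representations. First I would make this precise: using uniform continuity of $f$ on the compact interval $[0,1]$, for each $n$ there is $m=m(n)$ such that any two reals agreeing on their first $m$ binary bits have $f$-images within $2^{-n}$; combined with the ``trivial but useful'' observation at the end of Section~\ref{sec:representation}, this means that after scanning the first $m(n)$ pairs of bits $x_0y_0\cdots x_{m(n)-1}y_{m(n)-1}$, the prefix $y_0\cdots y_{n-1}$ of any accepting run is forced up to the standard $\pm 1$ dyadic ambiguity.

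The second step is to build the deterministic automaton $B$ via a subset construction that tracks the set of states $A$ could currently be in. The subtlety is turning such a subset automaton into a B\"uchi automaton with the correct acceptance condition; here I would use the standard ``breakpoint'' (Miyano--Hayashi-style) construction, which works precisely when the nondeterministic automaton is of a restricted form — for instance when it has the property that along the unique (up to finite ambiguity) accepting computation path, acceptance can be detected by a subset automaton. Concretely, I expect the argument to run: from continuity, show that $A$ may be assumed to have the property that for each input $\alpha\oplus\beta$ in the graph there is a run, and all runs on inputs in the graph eventually ``converge'' in the sense that the reachable-state sets stabilize enough that the breakpoint construction correctly certifies ``infinitely often in $F$.'' The reachable-set automaton has finitely many states, so $B$ is finite.

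The third step is to verify correctness: $B$ accepts $\alpha\oplus\beta$ iff $f(\overline\alpha)=\overline\beta$. The $\subseteq$ direction is the usual subset-construction bookkeeping. For $\supseteq$, given $(\overline\alpha,\overline\beta)$ in the graph, one uses continuity to argue the deterministic run cannot ``lose'' the accepting behavior: any candidate output consistent with the deterministic run must be within every $2^{-n}$ of $\overline\beta$, hence equals $\overline\beta$, and the breakpoint counter is hit infinitely often because $A$ had an accepting run and the set of live states always contains a state on such a run. I would also need to handle the non-injectivity of the binary representation explicitly, checking that the finitely many alternate representations of dyadic points do not break determinism — this is where the restriction to continuous $f$ is doing real work, since for discontinuous regular functions (Example~\ref{easy:exampledisc}) the statement would still be true but for a different reason.

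The main obstacle I anticipate is the acceptance-condition bookkeeping: a naive subset construction only gives a deterministic automaton with a co-B\"uchi-like or Rabin-like condition, and one must genuinely use continuity (not just finiteness) to collapse it to a single B\"uchi condition. Getting the breakpoint construction to interact correctly with the $\pm1$ dyadic ambiguity on the output tape — i.e., ensuring the deterministic automaton does not have to ``guess'' which of the two tails $0111\cdots$ or $1000\cdots$ the real world will present — is the delicate point, and I would expect the proof to spend most of its effort there.
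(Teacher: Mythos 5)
Your high-level instinct---subset construction plus using continuity to control the acceptance condition---does match the paper's approach, but two specific pieces of your plan are wrong in ways that matter. Most seriously, you have the hard direction of the correctness argument reversed. You say the containment $L(B)\subseteq\mathrm{Graph}(f)$ is ``the usual subset-construction bookkeeping'' and that continuity is needed for $\mathrm{Graph}(f)\subseteq L(B)$. It is the other way around: the subset construction always accepts \emph{at least} what the nondeterministic automaton does (that is the easy containment), and the danger is that it accepts \emph{too much}, since the subset automaton can pass through accepting sets infinitely often without any single underlying run of $A$ visiting $F$ infinitely often. That is precisely where continuity is used in the paper: from an accepting run of the subset automaton $A_2$, one extracts for each $k$ a finite $A_0$-run that revisits an accepting state and agrees with the scanned input/output prefix, pumps the final loop to obtain a rational pair $(x_k,y_k)$ with $f(x_k)=y_k$, and then sends $k\to\infty$ and invokes continuity of $f$ to conclude $f(\overline\alpha)=\overline\beta$. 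Your proposed argument for $\supseteq$ (``the set of live states always contains a state on such a run'') does not touch this problem, and your claim that $\subseteq$ is routine would leave the genuinely hard step unproved.

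Second, the Miyano--Hayashi breakpoint construction is not the right tool here, and appealing to it is a gap rather than a shortcut. Breakpoints certify a universal/co-B\"uchi-style condition (every live run has recently seen $F$), whereas the B\"uchi acceptance you need is existential (some run sees $F$ infinitely often); applying Miyano--Hayashi directly to a nondeterministic B\"uchi automaton is unsound in both directions without extra structure you have not supplied. What the paper actually does is first replace $A_0$ by an equivalent $A_1$ whose accepting states mark that \emph{some} $A_0$-run consistent with the prefix read so far has completed a cycle through an $A_0$-accepting state---a pumping-lemma-style tagging, not a breakpoint counter---and only then runs the plain subset construction, using the tagged states to manufacture the rational witnesses $(x_k,y_k)$. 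You would need to discover and justify an intermediate automaton playing this role; as written, your sketch jumps from ``subset construction'' to ``correct'' without supplying it. Your remarks on the dyadic $\pm1$ ambiguity are a reasonable worry but are handled implicitly by the limit argument and are not where the real difficulty lies.
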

\begin{proof}


Suppose that $Graph(f)$ is accepted by a B\"{u}chi automaton $A_0$ with states $Q_0$ and transition relation $\Delta_0$. Recall that a \emph{run}
is a (finite or infinite) sequence $(s_0,n_0,s_1,n_1,\ldots)$ such that $s_0$ is a starting state and for every $i$,  $n_i=0,1$ and $(s_i,n_i,s_{i+1})\in\Delta_0$. $A_0$ accepts $\alpha\oplus\beta$ if and only if there is an infinite run $(s_0,n_0,s_1,n_1,\ldots)$ such that $(\alpha\oplus\beta)(i)=n_i$ for every $i$ and there are infinitely many $i$ such that $s_i$ is an accepting state. It is easy to see that there is another B\"{u}chi automaton $A_1$ such that $A_0$ and $A_1$ accept the same set of strings and for every finite $A_1$-run $(t_0,n_0,\ldots,t_k,n_k,t_{k+1})$, we have that $t_{k+1}$ is $A_1$-accepting if and only if there is some $A_0$-run $(s_0,n_0,\ldots,s_k,n_k,s_{k+1})$ and some $i\leq k$ such that $s_i=s_{k+1}$ is $A_0$-accepting. We wish to consider $A_1$ instead of $A_0$ because we want to be able to apply an argument similar to the pumping lemma to show that each accepting state must be periodically visited.

Now we define the deterministic  B\"{u}chi automaton $A_2$ by the usual subset construction on $A_1$. More specifically, $Q_2=\mathcal{P}(Q_1)$, and we put $(X,n,Y)\in \Delta_2$ if and only if $Y=\{ y\mid (x,n,y)\in \Delta_1$ and $x\in X\}$. A~state $X$ is $A_2$-accepting if and only if $X$ contains an $A_1$-accepting state.

We claim that $A_2$ accepts $\alpha\oplus\beta$ if and only if $A_0$ accepts $\alpha\oplus \beta$. The subset construction always produces an automaton that accepts every string accepted by $A_1$;  however, the converse is not always true. In particular, there are $\omega$-regular languages that are not accepted by any deterministic B\"{u}chi automata. So suppose that $A_2$ accepts $\alpha\oplus\beta$. Let $x$ be the real encoded by $\alpha$ and $y$ be the real encoded by $\beta$; we argue that $y=f(x)$. Let $\left( X_0,n_0,X_1,n_1,\ldots\right)$ be an $A_2$-accepting run for $\alpha\oplus\beta$, 
 and let $g$ be such that $X_{g(k)}$ contains an $A_1$-accepting state for every $k$. For each $k$, the $A_1$-accepting state in $X_{g(k)}$ corresponds to a finite $A_1$-run $\left(t_0,n_0,t_1,n_1,\ldots,t_{g(k)}\right)$ where $t_{g(k)}$ is $A_1$-accepting. This means that there is an $A_0$-run $\left(s_0,n_0,s_1,n_1,\ldots,s_{g(k)}\right)$ and some $i(k)<g(k)$ such that $s_{i(k)}=s_{g(k)}$ is $A_0$-accepting; without loss of generality, assume that both $i(k)$ and $g(k)$ are odd. Let $x_k$ denote the rational encoded by $(n_0n_2\ldots n_{i(k)-1})*\left(n_{i(k)+1}n_{i(k)+3}\ldots n_{g(k)-1}\right)^\omega$ and $y_k$ be the rational encoded by $(n_1n_3\ldots n_{i(k)})*\left(n_{i(k)+2}n_{i(k)+4}\ldots n_{g(k)}\right)^\omega$. Obviously, for each $k$, $A_0$ accepts (the pair of strings encoding) $x_k$ and $y_k$, and thus $f(x_k)=y_k$. As $\lim_{k\rightarrow\infty}g(k)=\infty$, we have $\lim_{k\rightarrow\infty}x_k=x$ and $\lim_{k\rightarrow\infty}y_k=y$, thus, by the continuity of $f$, we have $f(x)=y$. Thus, $A_0$ must also accept $\alpha\oplus \beta$.
\end{proof}

We will also consider the fact that every continuous regular function is in fact Lipschitz continuous.
Recall $f$ is Lipschitz continuous if for every $x_1, x_2$, it satisfies ${\displaystyle |f(x_{1})-f(x_{2})|\leq K|x_{1}-x_{2}|}$ for some constant $K$.
 The implication is proved in Chaudhuri, Sankaranarayanan and Vardi~\cite{vardi}. We give an alternate proof of this fact. The reason for this is that our analysis here will be critical for the rest of this section, particularly in our  proof of Proposition \ref{reg->tr}, which is in turn used to prove Proposition \ref{lemma:kukudeterminsticnondeterministic}.

We also mention that even though we prove Lemma \ref{lemlip} for $2$-regular functions, it also holds for $k$\nobreakdash-regular functions for all $k\geq 2$ with essentially the same proof. However, as shown in \cite{reeeg} 
 if the input and output of a regular function are in different bases, then the function is not necessarily Lipschitz continuous.

\begin{lemma}[Chaudhuri, Sankaranarayanan, and Vardi~{\cite[Theorem 10]{vardi}}]\label{lemlip}
Suppose $f\colon  [0,1] \rightarrow \mathbb{R}$ is continuous and regular. Then $f$ is Lipschitz.
\end{lemma}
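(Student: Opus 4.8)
The plan is to exploit the deterministic Büchi automaton $A_2$ produced by Lemma \ref{lem:deterministic}, together with a pumping argument on its finite runs. Since $f$ is continuous and regular, fix a deterministic Büchi automaton $A$ accepting $\mathrm{Graph}(f)$ with respect to the interleaved standard binary representation, say with $N$ states. The key structural fact I would extract first is a \emph{periodicity} statement: because $A$ is deterministic, for each $x \in [0,1]$ the (unique) run of $A$ on the interleaved input coding $x \oplus f(x)$ must visit an accepting state infinitely often; moreover, whenever $A$ reads a long block of input without encountering an accepting state, the state must repeat within $N$ steps, and I can use the argument already rehearsed in the proof of Lemma \ref{lem:deterministic} (passing to $A_1$ first, so that accepting states are detected by finite runs) to conclude that accepting states recur with a bounded gap — roughly, within any window of $2N$ consecutive interleaved symbols the run hits an accepting state.

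The second step is to turn this bounded-gap property into a Lipschitz estimate. Take $x_1, x_2 \in [0,1]$ with $|x_1 - x_2|$ small, and let $n$ be such that $2^{-n-1} \le |x_1 - x_2| < 2^{-n}$. Pick binary representations $\alpha_1, \alpha_2$ of $x_1, x_2$ that agree on a long common prefix — here I need to be slightly careful because of the non-injectivity of the standard binary representation near dyadics, but the trivial observation recorded after Definition \ref{def:sbr} handles exactly this: if the representations are chosen to agree up to the point where $x_1, x_2$ genuinely separate, the agreement length is essentially $n$. Running $A$ on $\alpha_1 \oplus f(x_1)$'s code and on $\alpha_2 \oplus f(x_2)$'s code, the two runs coincide for the first $\approx 2n$ interleaved steps (while the inputs agree), hence $A$ is in the same state $q$ after reading the common prefix, having just passed — within the last $2N$ steps — through an accepting state. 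From that common state $q$, both $x_1$-run and $x_2$-run continue; the point is that the output bits of $f(x_1)$ and $f(x_2)$ read \emph{before} reaching state $q$ are forced to agree (they were read along a common run segment), so $f(x_1)$ and $f(x_2)$ share a prefix of length $\approx n - O(N)$, which gives $|f(x_1) - f(x_2)| \le 2^{-n + O(N)} = O(1)\cdot 2^{-n} \le O(1)\cdot|x_1 - x_2|$, i.e. the Lipschitz bound with constant $K = 2^{O(N)}$.

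The main obstacle I anticipate is the bookkeeping around the non-injective representations and the interleaving: I must guarantee that a genuinely shared prefix of the \emph{inputs} $x_1,x_2$ forces a genuinely shared prefix of the \emph{outputs}, and that the "recently visited an accepting state" property really does let me certify that the shared output prefix corresponds to a valid real approximation rather than to a run that might later fail to be accepting. Concretely, the delicate point is that after the runs split, the $x_i$-run might wander for a while before its next accepting state, and I need the periodicity bound to control how many output bits could still "change" — this is where passing to $A_1$ (so that acceptance is witnessed by finite runs, as in Lemma \ref{lem:deterministic}) and a pigeonhole/pumping argument on the $\le N$ states does the real work. Once that bounded-delay-of-output-commitment statement is nailed down, the geometric estimate is routine. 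I would also remark, as the paper does, that the same argument works verbatim for $k$-regular functions with $k \ge 2$, since nothing used binary specifically beyond the representation observation after Definition \ref{def:sbr}.
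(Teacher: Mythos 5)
The proposal has a genuine gap, and it is right at the point you flag as your ``main obstacle.'' The automaton $A$ from Lemma~\ref{lem:deterministic} is an \emph{acceptor} of the interleaved word $\alpha\oplus\beta$; it is not a transducer that reads $\alpha$ and emits $\beta$. Consequently, the single run of the deterministic automaton is indexed by the entire interleaved string $\alpha(0)\beta(0)\alpha(1)\beta(1)\cdots$, not by $\alpha$ alone. When you write that ``the two runs coincide for the first $\approx 2n$ interleaved steps (while the inputs agree), hence $A$ is in the same state $q$ \ldots the output bits of $f(x_1)$ and $f(x_2)$ read before reaching state $q$ are forced to agree (they were read along a common run segment),'' the argument is circular: the two runs coincide on a long prefix \emph{only if} the corresponding $\beta$-bits already agree, and agreement of the $\beta$-bits is precisely what you are trying to prove. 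Nothing in the proposal rules out, say, $\beta_1(0)\neq\beta_2(0)$, in which case the runs diverge after a single interleaved symbol. (And because of the dyadic ambiguity, $\beta_1(0)\neq\beta_2(0)$ is even compatible with $f(x_1)=f(x_2)$, so you cannot just ``choose the right representations'' without doing real work.)

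The paper's proof deals with exactly this issue by dropping the transducer picture and instead, for a fixed input prefix $\sigma$, tracking the \emph{set} of all candidate output prefixes simultaneously: this is the pruned tree $T_\sigma$, whose extendible branches at any level number at most $N$ (the state count). The heart of the argument is then a delicate combinatorial analysis (Fact~\ref{fact:one}, Claims~\ref{claim:kuku0} and~\ref{claim:kukumain}) that, within a window of bounded length depending only on $N$, locates a long chain of nested nodes of $T_\sigma$ all mapping to accepting states; Claim~\ref{claim:kuku1} then shows that every accepted output whose input lies near $\overline{\sigma}$ must be close to the value pinned down by that chain. That is substantially more work than a pigeonhole on $N$ states, and indeed your auxiliary claim that ``accepting states recur within any window of $2N$ interleaved symbols'' along the true accepting run is itself unjustified: the pigeonhole gives you a repeated state, not an accepting one, and the paper has to pump over \emph{alternative inputs and outputs} (not the given run) to find accepting states nearby. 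I would also point out that the branch-splitting you mention at the end (``after the runs split, the $x_i$-run might wander'') has to be handled for output branches that are present from step $0$, not just for branches that split after a long common run, and the $T_\sigma$ machinery is precisely the bookkeeping device that makes this tractable. Until you replace the ``common run segment forces common output bits'' step with a genuine argument that controls all of $T_\sigma$ at once, the proposal does not establish the lemma.
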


\begin{proof}
Suppose that $f$ is continuous and regular as witnessed by a  B\"{u}chi automaton $A$. By Lemma \ref{lem:deterministic}, we may assume that
$A$ is deterministic. We further assume that $A$ has $N$ states and represent the graph of $f$ by the collection of all strings $\alpha\oplus\beta$ where $\alpha$ is a binary expansion of some real number $x\in[0,1]$ and $\beta$ is a binary expansion of $f(x)$.

Instead of representing $A$ by a directed graph, it will be more convenient to represent the space of all possible configurations of $A$ by a labeled subtree of the full binary tree. We will denote the projection of this tree to be $T_\alpha$ when we fix the input to be $\alpha\in 2^\omega$: First, let $\ell(\sigma,\tau)$ be the state reached by $A$ immediately after scanning $(\sigma\upharpoonright m)\oplus (\tau\upharpoonright m)$, where $m=\min\{|\sigma|,|\tau|\}$. Now for each finite string $\sigma$, denote the finite tree $T_\sigma$ to be the set of all strings $\tau$ such that $|\tau|\leq |\sigma|$ and for all $i\leq |\tau|$, $\ell(\sigma,\tau\upharpoonright i)$ is unique among the collection of $\ell(\sigma,\eta)$s for all $\eta$ of length $i$: in other words, if $\ell(\sigma,\tau)=\ell(\sigma,\eta)$ for some~$\eta\neq\tau$ where $|\eta|=|\tau|$, we remove both $\eta$ and $\tau$ from the tree $T_\sigma$. This is because no infinite extension of $\tau$ (or of $\eta$) can be accepted by $A$ with any input extending $\sigma$. (Otherwise, use $\ell(\sigma,\tau)=\ell(\sigma,\eta)$ to output two different \emph{real numbers} for the same input $\sigma$ by extending $\tau$ and $\eta$ in the same way
). Such (pairs of) strings cannot represent correct outputs and we do not need to consider them anyway.


 Thus, $T_\sigma$ will be pruned so that it only contains those finite strings which still have a chance of being extended to a valid output.
A node $\tau$ in $T_\sigma$ is said to be extendible if there is some $\tau'\supseteq\tau$ such that $|\tau'|=|\sigma|$ and $\tau'\in T_\sigma$. Clearly, the maximum number of pairwise incomparable extendible nodes of $T_\sigma$ is $N$. For each string $\sigma$ and integers $i<j\leq|\sigma|$, it will be convenient to write $\sigma\res [i,j)$ instead of $\sigma(i)\sigma(i+1)\ldots\sigma(j-1)$. If $T$ is a tree we write $[T]$ to be the set of all infinite paths of $T$, i.e., $[T]$ is the set of all infinite strings $\alpha$ such that $\alpha\upharpoonright m\in T$ for every $m$.

\begin{claim}\label{claim:kuku00}
For any $\alpha\oplus\beta$ accepted by $A$, we have $\beta\in [T_{\alpha\upharpoonright m}]$ for every $m$.
\end{claim}
\begin{proof}[Proof of claim]
Suppose not. Then there is some $i\leq m$ and some $\eta$ of length $i$ such that $\eta\neq\beta\upharpoonright i$ and $\ell(\alpha,\beta\upharpoonright i)=\ell(\alpha,\eta)$. But this means that $\alpha\oplus \left(\eta*\beta\res [i,\infty)\right)$ is also accepted by $A$. However, $\eta*\beta\res [i,\infty)$ clearly represents a different real number than $\beta$, giving us a contradiction.
\end{proof}

\begin{claim}\label{fact:onemore}\,
Given any $\alpha\oplus\beta$ accepted by $A$, and $n<m\leq p$ such that $\ell\left(\alpha\upharpoonright p,\beta\upharpoonright n\right)=\ell\left(\alpha\upharpoonright p,\beta\upharpoonright m\right)$ is a non-accepting state, there must be some $q$ such that $n<q<m$ where $\ell\left(\alpha\upharpoonright p,\beta\upharpoonright q\right)$ is accepting. 
\end{claim}
\begin{proof}[Proof of claim]
Suppose not. 
Then consider, for each $s$, $\alpha_s$ to be the infinite string $\left(\alpha\upharpoonright n\right)*\left(\alpha\upharpoonright [n,m)\right)^s*\alpha\upharpoonright [m,\infty)$, and $\beta_s$ defined similarly. Then clearly for each $s$, $\alpha_s\oplus\beta_s$ is accepted by $A$. Since $\alpha_s$ converges in value to the value of $\alpha_\infty=\left(\alpha\upharpoonright n\right)*\left(\alpha\upharpoonright [n,m)\right)^\omega$ and $\beta_s$ converges in value to the value of $\beta_\infty=\left(\beta\upharpoonright n\right)*\left(\beta\upharpoonright [n,m)\right)^\omega$, by continuity, 
$A$ should also accept $\alpha_\infty\oplus\beta_\infty$. But this is not true as none of the states $\ell\left(\alpha\upharpoonright p,\beta\upharpoonright q\right)$ is accepting.
\end{proof}

\begin{claim}\label{claim:kukumain}
There is a constant $D$ (which depends only on $N$) such that given any $i$ and any $\sigma$ of length $i+D$, there are $\eta_0\subset\eta_1\subset\ldots\subset\eta_{2N}$ such that $i+3<|\eta_0|<|\eta_1|<\ldots<|\eta_{2N}|< i+D$ and for all $j,j'\leq 2N$, $\eta_j\in T_\sigma$ and $\ell(\sigma,\eta_j)=\ell(\sigma,\eta_{j'})$ is an accepting state.
(That is, $\ell(\sigma,\eta_j)$ is constant amongst the $j$s.) 
\end{claim}
\begin{proof}[Proof of claim]
Given $i$, consider $D$ large enough. There must some infinite strings $\alpha\supset \sigma$ and $\beta$ such that $\alpha\oplus \beta$ is accepted by $A$, since the function is total. By Claim \ref{claim:kuku00}, all finite initial segments of $\beta$ are in $T_\sigma$. By Claim \ref{fact:onemore}, for every $N+1$ consecutive values of $n$, $\ell(\sigma,\beta\upharpoonright n)$ must be an accepting state for some $n$. $D$ can be calculated accordingly and depends only on $N$.
\end{proof}

We now return to the proof of Lemma \ref{lemlip}. Fix $\sigma,i,j<j'$ as in Claim \ref{claim:kukumain}. Note that $(\sigma\upharpoonright |\eta_j|)*\left(\sigma\res \left[|\eta_j|,|\eta_{j'}|\right)\right)^\omega$ is accepted by $A$ together with $\eta_j*\left(\eta_{j'}\res[|\eta_j|,|\eta_{j'}|)\right)^\omega$ (i.e., as a pair).

To show that $f$ is Lipschitz, we wish to argue that $|f(x)-f(x')|\leq 2^C|x-x'|$ for any $x,x'$ for some constant $C$: we will see that $C=D+1$ will do. Note that $C$ depends only on $N$.
It is enough to show the following statement: For each $i$ and each dyadic rational $x'=p2^{-i-C}$ in $[0,1]$
  and each real number $x \in [0,1]$, if $|x-x'|<2^{-i-C}$ then $|f(x)-f(x')|<2^{-i}$. We fix such $i$, $x'$, and $x$ as above so that $|x-x'|<2^{-i-C}$. As $x'$ is dyadic, we can represent $x'$ as a finite binary string $\sigma$, i.e., $x'=\overline{\sigma}$ where $|\sigma|=i+C$. Apply Claim \ref{claim:kukumain} to obtain the sequence $\eta_0\subset\eta_1\subset\ldots\subset\eta_{2N}$ for $\sigma$, $i$, and $C$. For the remainder of this proof, we fix $i,x',x,\sigma$ and the sequence
$\eta_0\subset\eta_1\subset\ldots\subset\eta_{2N}$. We start by proving the following claim:

\begin{claim}\label{claim:kuku1}
Given any infinite strings $\alpha$ and $\beta$ such that $\alpha\oplus\beta$ is accepted by $A$, let $z$ be the real number represented by $\alpha$. Then if $|z-x'|<2^{-i-C}$, we have that $\left|\overline{\beta\upharpoonright (i+4)}-\overline{\eta_0\upharpoonright (i+4)}\right|\leq 2^{-i-4}$.
\end{claim}
\begin{proof}[Proof of claim]
There are three possibilities for $\alpha$: it may extend $\sigma\upharpoonright |\eta_N|$, be lexicographically to its left, or be lexicographically to its right. We first assume that $\alpha\supset\sigma\upharpoonright |\eta_N|$. Suppose that \[{\left|\overline{\beta\upharpoonright (i+4)}-\overline{\eta_0\upharpoonright (i+4)}\right|> 2^{-i-4}}.\] Fix some $j<j'<N$ such that $\ell\left(\alpha,\beta\upharpoonright |\eta_j|\right)=\ell\left(\alpha,\beta\upharpoonright |\eta_{j'}|\right)$. Now, by Claim \ref{claim:kukumain}, $$\left[(\sigma\upharpoonright |\eta_j|)*\left(\sigma\res \left[|\eta_j|,|\eta_{j'}|\right)\right)^\omega\right]\oplus
\left[\eta_j*\left(\eta_{j'}\res[|\eta_j|,|\eta_{j'}|)\right)^\omega\right]$$ is accepted by $A$. On the other hand, for each $k \in \omega$, 
we also have that
$$\left[(\sigma\upharpoonright |\eta_j|)*\left(\sigma\res \left[|\eta_j|,|\eta_{j'}|\right)\right)^k*\alpha\upharpoonright[|\eta_{j'}|,\infty)\right]\oplus
\left[\left(\beta\upharpoonright|\eta_j|\right)*\left(\beta\upharpoonright[|\eta_j|,|\eta_{j'}|)\right)^k*\left(\beta\upharpoonright[|\eta_{j'}|,\infty)\right)\right]$$
is accepted by $A$. Obviously, the sequence of real numbers represented by
$$(\sigma\upharpoonright |\eta_j|)*\left(\sigma\res \left[|\eta_j|,|\eta_{j'}|\right)\right)^k*(\alpha\upharpoonright[|\eta_{j'}|,\infty))$$
converges to the real number represented by $\left[(\sigma\upharpoonright |\eta_j|)*\left(\sigma\res \left[|\eta_j|,|\eta_{j'}|\right)\right)^\omega\right]$ as $k\rightarrow\infty$. However, since we assumed that $\left|\overline{\beta\upharpoonright (i+4)}-\overline{\eta_0\upharpoonright (i+4)}\right|> 2^{-i-4}$, we conclude that $\beta$ and $\left[\eta_j*\left(\eta_{j'}\res[|\eta_j|,|\eta_{j'}|)\right)^\omega\right]$ represent different real numbers, a contradiction to the continuity of $f$.

Now assume that $\alpha$ is lexicograpically to the left of $\sigma\upharpoonright |\eta_{N}|$. (The case where $\alpha$ is lexicographically to the right of $\sigma\upharpoonright |\eta_{N}|$ is similar.) Again, suppose that $\left|\overline{\beta\upharpoonright (i+4)}-\overline{\eta_0\upharpoonright (i+4)}\right|> 2^{-i-4}$. Let $d<|\eta_N|$ be such that $\alpha\upharpoonright d=\sigma\upharpoonright d$, $\alpha(d)=0$ and $\sigma(d)=1$. Since $|z-x'|<2^{-i-C}$, 
it follows that for every $d<d'<|\eta_{2N}|$ we have $\alpha(d')=1$ and $\sigma(d')=0$. Now fix some $N<j<j'<2N$ such that $\ell\left(\alpha,\beta\upharpoonright |\eta_j|\right)=\ell\left(\alpha,\beta\upharpoonright |\eta_{j'}|\right)$. By the same argument as the first part of the proof of this claim, we have that
$$\left[(\sigma\upharpoonright |\eta_j|)*\left(\sigma\res \left[|\eta_j|,|\eta_{j'}|\right)\right)^\omega\right]\oplus
\left[\eta_j*\left(\eta_{j'}\res[|\eta_j|,|\eta_{j'}|)\right)^\omega\right]$$
as well as
$$\left[(\alpha\upharpoonright |\eta_j|)*\left(\alpha\res \left[|\eta_j|,|\eta_{j'}|\right)\right)^k*(\alpha\upharpoonright[|\eta_{j'}|,\infty))\right]\oplus
\left[\left(\beta\upharpoonright|\eta_j|\right)*\left(\beta\upharpoonright[|\eta_j|,|\eta_{j'}|)\right)^k*\left(\beta\upharpoonright[|\eta_{j'}|,\infty)\right)\right]$$
are accepted by $A$ for all $k$. However, since $(\sigma\upharpoonright |\eta_j|)*\left(\sigma\res \left[|\eta_j|,|\eta_{j'}|\right)\right)^\omega=\left(\sigma\upharpoonright d\right)*1*0^\omega$ and ${{(\alpha\upharpoonright |\eta_j|)}*\left(\alpha\res \left[|\eta_j|,|\eta_{j'}|\right)\right)^k*(\alpha\upharpoonright[|\eta_{j'}|,\infty))}$ converges to
$\left(\sigma\upharpoonright d\right)*0*1^\omega$ as $k\rightarrow\infty$, this means that if $f$ is continuous, then $\eta_j*\left(\eta_{j'}\res[|\eta_j|,|\eta_{j'}|)\right)^\omega$ and  $\left(\beta\upharpoonright|\eta_j|\right)*\left(\beta\upharpoonright[|\eta_j|,|\eta_{j'}|)\right)^\omega$ represent the same real number. This, of course, contradicts our assumption that $\left|\overline{\beta\upharpoonright (i+4)}-\overline{\eta_0\upharpoonright (i+4)}\right|> 2^{-i-4}$.
\end{proof}

Now we fix $\alpha_0$ to be a binary representation of $x$ and $\beta_0$ to be a binary representation of $f(x)$. Let $\beta_1$ be a binary representation of $f(x')$. We wish to argue that $|f(x)-f(x')|<2^{-i}$. Applying Claim \ref{claim:kuku1} twice, first to $\alpha_0\oplus\beta_0$ and then to $\sigma*0^\omega\oplus\beta_1$, we see that
\[|f(x)-f(x')|\leq \left|f(x)-\overline{\eta_0\upharpoonright(i+4)}\right|+\left|f(x')-\overline{\eta_0\upharpoonright(i+4)}\right|\leq 2^{i-2}+2^{i-2}<2^{-i}.\qedhere\]
\end{proof}

\begin{remark}
We remark that regular (real) functions possess additional convenient decidability-theoretic properties. For example, there is an \( O(n^2) \)-time procedure for deciding  
the continuity of regular functions that can be represented by deterministic automata \cite[Theorem 7]{vardi}, and the set  
of minima of a regular function is also regular \cite[Theorem 14]{vardi}. As one of the referees pointed out, the equality of two regular real functions is also a decidable property. To see why, fix two automata representing~\(g\) and~\(f\), and use \cite[Theorem 4]{vardi} to produce an automaton representing \( h = f - g \). Then \( \forall x \, h(x) = 0 \) is automatic; this follows, for example, from the regularity of projection and the decidability of the universality problem. (Consider all inputs $(\xi, 0^{\omega})$ accepted by the automaton representing~$g$ and check if such $\xi$ make up the entire $\Sigma^\omega$.)
We will not need these properties in the present article.  
\end{remark}

\subsection{Transducer functions}\label{sec:transducer}
Formally, a \emph{finite-state transducer} is a tuple $\left(\Sigma,\Gamma,Q,s_0,\delta,\omega\right)$ where $\Sigma,\Gamma$ are finite non-empty sets of symbols representing the set of input and output alphabets respectively, $Q$ is a finite non-empty set of states, $s_0\in Q$ is the initial state, $\delta:Q\times\Sigma\rightarrow \Gamma\times Q$ is the state-transition function. A nondeterministic FST is one where the state-transition function is a multifunction.

A deterministic transducer computes a function $f\colon [0,1]\rightarrow\mathbb{R}$ if for every $x\in[0,1]$ and for every representation $\alpha$ of $x$, the transducer writes a representation for $f(x)$  on the output tape. Again we will use either the standard or signed binary representations, but this definition is not restricted to these types of representations.

A transducer may also be nondeterministic, and in this case, each finite run may be extendible to a number of different states and write different output bits upon reading the next input bit, or it may ``die,'' i.e., enter a state without outgoing edges.

 \begin{definition}
 A nondeterministic transducer computes a function $f\colon [0,1]\rightarrow[0,1]$ if for every~${x\in[0,1]}$, and every representation $\alpha$ of $x$, the following hold:
 \begin{itemize}
 \item There is an infinite run of the transducer while reading $\alpha$, and
 \item for every infinite run of the transducer while reading $\alpha$, it writes a representation for $f(x)$ on the output tape.
 \end{itemize}
 \end{definition}
 The nondeterministic nature is obviously captured by the ability of the transducer to discover an undesirable finite run and kill that particular run. However all runs which are allowed to go on indefinitely must produce a representation of the correct output at the end. Note that a nondeterministic transducer is allowed to produce different representations of $f(x)$ on different runs even for the same input representation of $x$.

If $\nu$ is a representation of $[0,1]$, then a transducer which works with input and output represented by~$\nu$ is called a \emph{transducer in $\nu$}.

The transducer model (both deterministic and nondeterministic) we will use is one which allows for a fixed delay $D$ (see \cite{MullerTransducer}). This model is identical to the one defined formally above, except that it has an additional parameter $D$. The transducer with delay $D$ is allowed to scan the first $D$~many digits of the input without writing any output, and may possibly go through different state transitions during this time (which allows the transducer to remember the first $D$ many digits read). At step $s>D$, the transducer will scan the $s$-th digit of the input, make a transition from state~$e_{s}$ to~$e_{s+1}$, and write digit $y_{s-D}$ on the output tape.
(See~\cite{srimani}). The transducer model is based on the idea of \emph{online} computation, therefore it is compelled to produce one output bit for every input bit read. A non-deterministic transducer works similarly; each run is allowed to die after finitely many steps (if that run enters a rejecting state), but every infinite run must produce an infinite length output.

\begin{remark}\label{rem:makingtherefhappy}
We acknowledge that there is a wide array of definitions in automata theory, and we expect that our results will be quite sensitive to the notions used. Here we use the definition of Muller~\cite{MullerTransducer}, who was perhaps one of the first to use transducers in real analysis; this notion essentially coincides with the well-known Mealy machines~\cite{Mealy}.  Thus, in the simplest case, the machines  
read exactly one symbol and write exactly one symbol during each cycle. We also require accepting all computations that produce an infinite output word. As one of the referees noted, this acceptance condition is stricter for transducers than Büchi's, which consequently permits a more diverse range of machines.

We mention that one might consider even further additions to the machine model, such as including rejecting states or more relaxed rules for reading and writing. As one of the referees pointed out, our approach likely admits equivalent formulations—and indeed, perhaps many equivalent formulations. For example, a similar but apparently more general approach was used in a series of articles by Lisovik and Shkaravskaya~\cite{Lisovik2,Lisovik3,Lisovik1}, who develop the use of pushdown $R$-transducers in real analysis.  
However, it turns out that this extra degree of generality does not seem to extend the class of continuous functions from $[0,1]$ to $\mathbb{R}$ that can be computed by a transducer. The key difference from classical automata theory is that we view automata in the real-analytic context, which inevitably imposes extra restrictions and requires additional care.  
For more results about Mealy machines, refer to~\cite{MR2264276,MR2194959}.  

We also point out that Brough, Khoussainov, and Nelson \cite{BKN:08} studied variations of Mealy machines, which they called input-output automata, in the algebraic context. In their article they introduced a hierarchy of notions for presenting algebraic structures based on the Mealy machine model. However, the notion in that article which most closely resembles our own is much more restrictive and implies finiteness in many classes, such as groups, rings, and Boolean algebras \cite[Proposition 1]{BKN:08}. In particular, it is not implied by regularity in the sense of Khoussainov and Nerode~\cite{KhoussainovN94}.

For alternate approaches to automatic structures, we refer the reader to Hodgson~\cite{hod1,hod2} and Blumensath and Gr{\"a}del~\cite{blumensath2000}. For a comprehensive survey of recent results on automatic structures, see  Gr{\"a}del~\cite{gradel2020}. We also mention the work of Khoussainov and Nerode~\cite{KhoussainovN08} in this context. We are unaware of similar survey work covering automatic presentations in real analysis.


\end{remark}

An important observation is that any $f$ computed by a deterministic transducer is continuous because it is computable.
It is a bit less obvious why functions computed by nondeterministic transducers have to be continuous as well; we provide a proof here.

\begin{proposition}\label{cont}
Suppose $f\colon [0,1] \rightarrow [0,1]$ is computed by a nondeterministic transducer in binary. Then $f$ is continuous.
\end{proposition}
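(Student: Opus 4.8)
The plan is to show that a function computed by a nondeterministic transducer in binary must be continuous by exploiting the fixed-delay structure of the machine together with the crucial observation that \emph{every} representation of a real must be accepted and must produce \emph{some} correct output, while any sufficiently long finite input already commits the transducer to a bounded set of possible output prefixes. First I would set up the basic bookkeeping: let $T$ be a nondeterministic transducer with delay $D$ and $N$ states computing $f$, and for a finite binary string $\sigma$ with $|\sigma|=m$ consider all finite runs of $T$ on $\sigma$ that have not died. Each such surviving run, after the initial $D$ unproductive steps, has written some output string of length $m-D$; call the collection of these output strings (over all surviving runs on $\sigma$) the set $\mathrm{Out}(\sigma)$. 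The key combinatorial fact is that although there may be exponentially many runs, the continuity argument only needs that every infinite run extending a surviving run on $\sigma$ outputs a representation of $f(\overline{\alpha})$ for the corresponding input $\alpha \supseteq \sigma$.

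The main step is an $\varepsilon$–$\delta$ estimate in the style of Claim~\ref{claim:kuku1}. Fix $x \in [0,1]$ and $n$; I want to find $k$ so that $|x-x'|<2^{-k}$ implies $|f(x)-f(x')|<2^{-n}$. Pick a binary representation $\alpha$ of $x$. Because $T$ has only finitely many states, along any fixed infinite run on $\alpha$ some state recurs; more importantly, I would argue that there is a length $k$ (depending on $N$, $D$, and $n$, but not on $x$) such that for \emph{every} input string $\tau$ of length $k$ that agrees with a representation of some $x'$ with $|x-x'|<2^{-k}$, and for every surviving run of $T$ on $\tau$, the output string already produced determines $f(x')$ up to error $2^{-n}$ — and this output prefix agrees with one produced on a representation of $x$ itself up to the same error. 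The mechanism is exactly the pumping/continuity trick already used for regular functions: if two surviving runs on comparable-enough inputs reached the same state but had committed to outputs differing by more than $2^{-n}$ in their first $n+O(1)$ bits, one could pump the input to produce two inputs converging to the same real but forcing two different real outputs, contradicting that $f$ is (single-valued and) defined on that real. Here the delay $D$ only shifts indices by a constant and causes no trouble.

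A cleaner packaging, which I would actually prefer to write, avoids re-deriving the pumping lemma: since $f$ is computed by a nondeterministic transducer, in particular its graph (appropriately encoded on a single alternating tape) is accepted by a nondeterministic B\"uchi automaton — reading an input bit, guessing the matching output bit, and checking consistency with the transducer's behaviour — so $f$ is regular in the sense of Definition~\ref{def:regularbase2}. If I can additionally observe that $f$ is total and single-valued as a function $[0,1]\to[0,1]$, then I would like to conclude continuity directly. The subtlety is that regular functions need not be continuous in general (Example~\ref{easy:exampledisc}), so this shortcut does not literally work; the transducer structure genuinely gives more, namely that \emph{every} representation of $x$ yields output, including the two representations $\sigma 0 1^\omega$ and $\sigma 1 0^\omega$ of a dyadic point, and these outputs must encode the \emph{same} real $f(x)$. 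This is precisely the ingredient that fails for bare B\"uchi automata and that forces continuity. So the honest proof must run the pumping argument by hand, and the main obstacle is organizing it: tracking surviving runs under the $D$-delay, choosing the number of bits to read ahead so that a repeated state is guaranteed along some run with committed output, and then using convergence of dyadic inputs plus single-valuedness of $f$ to pin down the output to within $2^{-n}$. Once that estimate is in place, continuity is immediate. I expect the verification that the delay contributes only an additive constant, and the careful statement of "surviving run" for nondeterministic machines, to be the fiddly parts, but no genuinely new idea beyond the regular-function analysis of Section~\ref{sec:regfns} is needed.
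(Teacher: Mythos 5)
Your proposal heads down a substantially harder road than the paper takes, and the core mechanism you sketch (pumping) is not actually what is needed here. The paper's proof of Proposition~\ref{cont} is a short compactness argument. Fix $\alpha$ representing $x$ and suppose continuity fails at $x$ with gap $2^{-j_0}$. Fix one infinite run of $M$ on $\alpha$ with output $\beta$ (so $\overline{\beta}=f(x)$). Consider the tree $T$ of finite runs of $M$ \emph{along $\alpha$} whose written output is already at distance $\geq 2^{-j_0-1}$ from $\overline{\beta}$. For each $j_1$, discontinuity supplies an $\alpha'$ agreeing with $\alpha$ on $j_1$ bits and an infinite run on $\alpha'$ with output far from $\beta$; its length-$j_1$ prefix is a finite run on $\alpha$ and (by the usual $2^{-j_1}$ tail estimate) lies in $T$. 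So $T$ is infinite; it is finitely branching; by K\"onig's lemma it has an infinite path, which is an infinite run of $M$ on $\alpha$ whose output represents a real at distance $\geq 2^{-j_0-2}$ from $\overline{\beta}$. That contradicts the transducer condition that \emph{every} infinite run on $\alpha$ writes a representation of $f(x)$. No pumping, no uniform bound depending only on $N,D,n$, no repeated-state analysis.

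You correctly observe that the ``$f$ is regular, hence nice'' shortcut fails because regular functions need not be continuous, and you correctly identify the two ingredients that matter (every representation must be accepted, all infinite runs on a fixed input must agree on the output). But from there you pivot to a pumping argument that would, if carried out, prove the stronger Lipschitz property of Lemma~\ref{lemlip} directly for transducers. That is not needed for this proposition, and your sketch of it has a real gap: ``two surviving runs on comparable-enough inputs reached the same state but had committed to outputs differing by more than $2^{-n}$ \ldots one could pump'' does not yield a contradiction on its face, because two surviving runs on different inputs reaching the same state cannot simply be concatenated, and one (or both) may die under later input bits. Nondeterministic transducers allow finite runs on $\sigma$ with wildly different committed outputs, as long as no single infinite input keeps two inconsistent runs alive forever. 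Controlling which runs survive through cycles, and how repeated states interact across nearby inputs, is exactly the technical burden of Claims~\ref{claim:kuku0}--\ref{claim:kuku1}, which the paper defers to the regular-function section and does not reprove for transducers. For Proposition~\ref{cont} you should instead leverage the infinitary totality and single-valuedness of the transducer directly through K\"onig's lemma; that gives pointwise continuity with none of the pumping machinery.
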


\begin{proof}
Suppose that $f$ is computed by a nondeterministic transducer $M$ for the standard binary representation and suppose the states of $M$ are $s_0,\ldots,s_n$ and the transition relation is $\Delta_M$.  We can assume that $M$ has no delay: if it did, we could simply consider the function $2^{-D}f(x)$ instead.

We show that for any $\alpha$ and any $j_0$, there is some $j_1$ such that for every $\alpha'$ with $\alpha'\upharpoonright j_1=\alpha\upharpoonright j_1$, $|f\left(\overline{\alpha'}\right)-f(\overline{\alpha})|< 2^{-j_0}$. We fix $\alpha$ and $j_0$ as above and assume that no such $j_1$ exists. Fix an infinite run produced by $M(\alpha)$ and assume that this run writes the output $\beta$; of course, we have $\overline{\beta}=f(\overline{\alpha})$.

Now consider the set $U$ of all finite strings $s_{i(0)},s_{i(1)},\ldots,s_{i(k)}$ such that $s_{i(0)}$ is an initial $M$-state and, for every $k'<k$, $\left(s_{i(k')},\alpha(k'),\tau(k'),s_{i(k'+1)}\right)\in\Delta_M$ for some finite string $\tau$ of length $k$ such that $\left|\overline{\tau}-\overline{\beta}\right|\geq 2^{-j_0-1}$. That is, $U$ is the set of all finite runs of $M$ while scanning $\alpha$ which write an output that is relatively far away from $\beta$. The downwards closure of $U$ is a subtree $T$ of $\{s_0,s_1,\ldots,s_n\}^{<\omega}$. We first claim that $T$ is infinite: If $j_1>j_0+1$, then there is some $\alpha'\supset\alpha\upharpoonright j_1$ and some infinite run of $M(\alpha')$ that produces an output $\beta'$ such that $\left|\overline{\beta'}-\overline{\beta}\right|\geq 2^{-j_0}$. However, the finite initial segment of this run of length $j_1$ must be in $U$ since $\left|\overline{\beta'}-\overline{\beta}\right|\leq \left|\overline{\beta'\upharpoonright j_1}-\overline{\beta}\right|+\left|\overline{\beta'\upharpoonright j_1}-\overline{\beta'}\right|\leq\left|\overline{\beta'\upharpoonright j_1}-\overline{\beta}\right|+2^{-{j_0-1}}$. Therefore, by compactness, $T$ contains an infinite path $\delta$. Obviously, $\delta$ is a infinite run of $M(\alpha)$; assume it writes an output $\widetilde{\beta}$. If~$j>j_0+1$, then, as $\delta\upharpoonright j$ has an extension in $U$, we can conclude that $\left|\overline{\widetilde{\beta}\upharpoonright j}-\overline{\beta}\right|\geq 2^{-j_0-1}-2^{-j}\geq 2^{-j_0-2}$. Therefore, $\widetilde{\beta}$ and $\beta$ represent different real numbers, which is impossible as they are each an output of an infinite run of $M(\alpha)$. Thus, $f$ is continuous.
\end{proof}

We note that the class of functions that can be computed by a deterministic transducer for the standard binary representation is a strictly smaller class than the class of functions that can be computed by a nondeterministic transducer for the standard binary representation.  The following proposition provides us with a witness to this fact.

\begin{proposition}\label{determcounterexample}There is a continuous function $f\colon [0,1] \rightarrow [0,1]$ that can be computed by a nondeterministic transducer for the standard binary representation but not by any deterministic transducer for the standard binary representation.
\end{proposition}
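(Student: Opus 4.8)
The plan is to exhibit a concrete continuous function $f\colon[0,1]\to[0,1]$ together with a nondeterministic transducer computing it, and then show that no deterministic transducer can compute $f$ in the standard binary representation. The obstruction for a deterministic transducer will be exactly the phenomenon advertised in the introduction: a deterministic machine scanning a long block of $1$s must commit to output bits before learning whether the block of $1$s eventually terminates in a $0$, and for a well-chosen $f$ these two continuations force incompatible outputs. Concretely, I would look for an $f$ that behaves differently on reals of the form $\overline{\sigma\,0\,1^\omega}$ versus $\overline{\sigma\,1\,0^\omega}$ — two strings representing the \emph{same} real — in the sense that the natural ``reading'' of the two representations by a finite-state device leads to different output prefixes, while $f$ itself is continuous and genuinely regular. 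A clean candidate is built from a piecewise-linear or self-similar construction whose value at a dyadic rational depends on which of the two binary expansions is presented in a way that a nondeterministic machine can repair (by guessing and later killing bad runs) but a deterministic machine cannot.

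The steps, in order, would be: (1) Define $f$ explicitly — e.g. as a continuous function that is linear (with dyadic-rational slopes) on a countable family of intervals arranged so that, near certain dyadic points, the ``left'' representation $\sigma01^\omega$ and the ``right'' representation $\sigma10^\omega$ of the breakpoint get processed toward different tentative outputs. (2) Construct a nondeterministic transducer for $f$: the machine, upon entering a long run of input $1$s after some $\sigma$, nondeterministically guesses whether this is the tail $1^\omega$ (in which case it is at a dyadic point, approached ``from the left'') or whether a $0$ will eventually appear; it follows both guesses on separate runs, and whichever guess is wrong produces a run that can be killed (enters a dead state) once the input reveals the truth, while the correct guess produces the right output. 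One must check both defining clauses: every input has an infinite run, and every infinite run outputs a representation of $f(\overline\alpha)$. Here the fixed-delay parameter $D$ may be used to buy enough lookahead. (3) Verify $f$ is continuous (either directly, or by invoking Proposition~\ref{cont} once the nondeterministic transducer is in hand). (4) Prove no deterministic transducer with any delay $D$ computes $f$: suppose $M$ is such a machine with $N$ states and delay $D$; feed it $\sigma 1^k$ for $k$ large; by pigeonhole on the $N$ states, $M$'s state cycles while reading the $1$s, so after the delay $D$ is exhausted $M$ has already written output bits that are then forced to be consistent simultaneously with the continuation $1^\omega$ (giving $f$ of the dyadic point $\overline{\sigma}+2^{-|\sigma|}\cdot(\text{something})$, approached from one side) and with the continuation $0^\omega$ after one more step (giving the same real but, by the design of $f$, a different required output prefix) — contradiction. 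The key is that a deterministic transducer's output on $\sigma 1^k$ is a fixed prefix independent of what comes after, whereas $f$'s two ``one-sided'' behaviors at the dyadic point demand different prefixes.

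The main obstacle I expect is step (4) made fully rigorous: one must be careful that the contradiction is genuinely about \emph{real numbers} and not merely about \emph{strings} (since $\sigma01^\omega$ and $\sigma10^\omega$ do name the same real, a deterministic transducer is of course allowed to output different string-representations of $f$ of that real on those two inputs). So the design of $f$ must ensure that the two forced output prefixes are far apart \emph{as dyadic rationals} — say they disagree already in a bit $i$ with the discrepancy exceeding $2^{-i}$, using the elementary observation recorded in Section~\ref{sec:representation} — so that no completion can reconcile them. This forces some care in choosing $f$: the slopes and breakpoints must be arranged so that the output discrepancy at the relevant finite stage is bounded below independently of $k$. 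Once $f$ is chosen with that quantitative separation built in, the pigeonhole/cycling argument on $M$'s states closes the proof. A secondary, more bookkeeping-level difficulty is handling the delay $D$ uniformly (the argument must work for every $D$, not just $D=0$), which is why the long block of $1$s is taken of length $k \gg D + N$.
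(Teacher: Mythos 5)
Your high-level plan is the same as the paper's: exhibit an explicit $f$, build a nondeterministic transducer that hedges by running two outputs in parallel, and then argue that a fixed-delay deterministic machine is forced to commit to its first output bit before the input has resolved which side of a dyadic value the answer falls on. However, as written the proposal is a plan rather than a proof, and there are two substantive issues worth flagging beyond the obvious one that you never actually write down $f$.

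First, there is a conceptual slip in step (1): you describe a candidate ``whose value at a dyadic rational depends on which of the two binary expansions is presented.'' No function can have this property; what actually varies is the \emph{transducer's tentative output prefix}, not $f$ itself. The paper's construction makes this concrete and reveals what the right invariant is: the difficulty must live in the \emph{output}, not the input. The paper's $f$ is a small bump train accumulating at $0$ that oscillates strictly above and strictly below $1/2$ on every interval $(2^{-n-1},2^{-n}]$, with $f(2^{-n})=1/2$ for all $n$. Reading any prefix $0^n$ of the input therefore leaves it undetermined whether the eventual value of $f$ is $>1/2$ (forcing the first output bit to be $1$) or $<1/2$ (forcing it to be $0$). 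This also shows that your worry about ``reals vs.\ strings'' is handled automatically: strict inequalities $f>1/2$ and $f<1/2$ pin down the first output bit unambiguously, with no need for a separate quantitative-separation bookkeeping argument.

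Second, your step (4) invokes a pigeonhole/cycling argument on the $N$ states and a long block of $1$s. Neither is needed, and the focus on $1$s somewhat obscures the mechanism. For a transducer with fixed delay $D$, the first output bit appears after reading exactly $D+1$ input bits, full stop; no cycling is required. Feed it $0^{D+1}$ (a block of $0$s, not $1$s, since the oscillation is arranged near $0$). The input can then be completed to $2^{-n-1}+2^{-n-3}$ or to $2^{-n-1}+3\cdot 2^{-n-3}$ (both of which have binary expansions beginning with $0^n$ when $n=D+1$), and these two inputs have $f$-values on opposite sides of $1/2$. That is the whole contradiction. If you want to salvage the ``long block of $1$s'' variant, you would need to design $f$ to oscillate near the right endpoint of some interval instead; this can be done but is not what your sketch actually sets up, and your proposal as written never reaches a point where the construction would be checkable.
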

	
\begin{proof}
We define a function $f$ as follows, see Figure~\ref{asdalkjdfgnlkasln}:
\[f(x)=\begin{cases}
2^{-1} &\text{if $x=0$},\\
x+2^{-1}-2^{-n-1}, &\text{if $x\in \left(2^{-n-1},2^{-n-1}+2^{-n-3}\right]$},\\
-x+2^{-1}+2^{-n-1}+2^{-n-2} &\text{if $x\in \left(2^{-n-1}+2^{-n-3},2^{-n-1}+3\cdot 2^{-n-3}\right]$},\\
x+2^{-1}-2^{-n}, &\text{if $x\in \left(2^{-n-1}+3\cdot 2^{-n-3},2^{-n}\right]$}.
\end{cases}\]
It is easy to see that $f$ can be computed by a nondeterministic transducer for the standard binary representation: Given an input stream $\alpha$, we output both $1000\ldots$ and $0111\ldots$ until we find the first bit $n$ such that $\alpha(n)=1$. Then the value of $\alpha(n)\alpha(n+1)\alpha(n+2)$ will determine whether $x$ lies in the second, third or fourth case in the definition of $f(x)$. If we need to output a number larger than $\frac{1}{2}$, we kill the run that outputs $011\ldots$, and we kill the other run otherwise.

We claim that $f$ cannot be computed by a deterministic transducer for the standard binary representation. If, on the contrary, $M$ does this, then $M$ must output the first digit of the output after scanning~$0^n$ for some $n$. Since $f\left(2^{-n-1}+2^{-n-3}\right)>\frac{1}{2}$ and $f\left(2^{-n-1}+3\cdot 2^{-n-3}\right)<\frac{1}{2}$, we see that $M$ cannot possibly compute $f$.
\end{proof}

	
\begin{figure}[t]
	\label{asdalkjdfgnlkasln}
	\begin{tikzpicture}%
		[declare function={%
			f(\t) =%
			%
			(\t > 2^(-0-1)) * (\t <= 2^(-0-1)+2^(-0-3)) * (\t+2^(-1)-2^(-0-1))+%
			(\t > 2^(-0-1)+2^(-0-3)) * (\t <= 2^(-0-1)+3* 2^(-0-3)) * (-\t+2^(-1)+2^(-0-1)+2^(-0-2))+%
			(\t > 2^(-0-1)+3* 2^(-0-3)) * (\t <= 2^(-0)) * (\t+2^(-1)-2^(-0))+%
			(\t > 2^(-1-1)) * (\t <= 2^(-1-1)+2^(-1-3)) * (\t+2^(-1)-2^(-1-1))+%
			(\t > 2^(-1-1)+2^(-1-3)) * (\t <= 2^(-1-1)+3* 2^(-1-3)) * (-\t+2^(-1)+2^(-1-1)+2^(-1-2))+%
			(\t > 2^(-1-1)+3* 2^(-1-3)) * (\t <= 2^(-1)) * (\t+2^(-1)-2^(-1))+%
			(\t > 2^(-2-1)) * (\t <= 2^(-2-1)+2^(-2-3)) * (\t+2^(-1)-2^(-2-1))+%
			(\t > 2^(-2-1)+2^(-2-3)) * (\t <= 2^(-2-1)+3* 2^(-2-3)) * (-\t+2^(-1)+2^(-2-1)+2^(-2-2))+%
			(\t > 2^(-2-1)+3* 2^(-2-3)) * (\t <= 2^(-2)) * (\t+2^(-1)-2^(-2))+%
			(\t > 2^(-3-1)) * (\t <= 2^(-3-1)+2^(-3-3)) * (\t+2^(-1)-2^(-3-1))+%
			(\t > 2^(-3-1)+2^(-3-3)) * (\t <= 2^(-3-1)+3* 2^(-3-3)) * (-\t+2^(-1)+2^(-3-1)+2^(-3-2))+%
			(\t > 2^(-3-1)+3* 2^(-3-3)) * (\t <= 2^(-3)) * (\t+2^(-1)-2^(-3))+%
			(\t > 2^(-4-1)) * (\t <= 2^(-4-1)+2^(-4-3)) * (\t+2^(-1)-2^(-4-1))+%
			(\t > 2^(-4-1)+2^(-4-3)) * (\t <= 2^(-4-1)+3* 2^(-4-3)) * (-\t+2^(-1)+2^(-4-1)+2^(-4-2))+%
			(\t > 2^(-4-1)+3* 2^(-4-3)) * (\t <= 2^(-4)) * (\t+2^(-1)-2^(-4))+%
			(\t > 2^(-5-1)) * (\t <= 2^(-5-1)+2^(-5-3)) * (\t+2^(-1)-2^(-5-1))+%
			(\t > 2^(-5-1)+2^(-5-3)) * (\t <= 2^(-5-1)+3* 2^(-5-3)) * (-\t+2^(-1)+2^(-5-1)+2^(-5-2))+%
			(\t > 2^(-5-1)+3* 2^(-5-3)) * (\t <= 2^(-5)) * (\t+2^(-1)-2^(-5))+%
			(\t <= 2^(-6-1)+2^(-6-3)) * (\t+2^(-1)-2^(-6-1))+%
			(\t > 2^(-6-1)+2^(-6-3)) * (\t <= 2^(-6-1)+3* 2^(-6-3)) * (-\t+2^(-1)+2^(-6-1)+2^(-6-2))+%
			(\t > 2^(-6-1)+3* 2^(-6-3)) * (\t <= 2^(-6)) * (\t+2^(-1)-2^(-6));%
		}]%
		\pgfplotsset{every tick/.style={black,}}
		\begin{axis}[
			axis x line=bottom, 
			axis y line=left, 
			ymin=0, 
			ymax=1, 
			xmin=0, 
			xmax=1,
			enlarge x limits={value=0.05,upper},
			enlarge y limits={value=0.05,upper}, 
			samples=509, 
			xtick={0,1}, 
			ytick={0,1}, 
			width=10cm, 
			height=10cm,
			xlabel style={at={(rel axis cs:1,0)},anchor=west,yshift=0.52cm},
			xlabel=$x$,
			ylabel style={at={(rel axis cs:0,1)},rotate=-90,anchor=south,xshift=1.2cm},
			ylabel=$f(x)$,
			]%
			\addplot[domain=0.0078125:1] {f(x)}; 
			\node[scale=0.15] at (axis cs:0.0041,0.5) {$\cdots$};
		\end{axis}
	\end{tikzpicture}
	\caption{The counterexample given in Proposition~\ref{determcounterexample}.}
\end{figure}
	
Finally, we can establish the following two results:
\begin{proposition}\label{Q->Q}
Let $f\colon [0,1] \rightarrow [0,1]$ be regular. Then $f$ maps rationals to rationals.
\end{proposition}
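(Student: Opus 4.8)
The plan is to exploit the fact that a transducer is a finite-state device, so that when it processes an eventually periodic input it must eventually settle into an eventually periodic sequence of (state, output-block) pairs. First I would recall that a rational $q \in [0,1]$ has a binary representation $\alpha$ that is eventually periodic: $\alpha = \rho \, \pi^\omega$ for some finite strings $\rho, \pi$ with $|\pi| \geq 1$. (If $q$ is dyadic we may use $\rho \, 0^\omega$, which is of this form.) Fix a nondeterministic transducer $M$ computing $f$, with delay $D$; after reading the first $D$ bits we may as well assume there is no delay by passing to $2^{-D}f$, exactly as in the proof of Proposition~\ref{cont}, since $2^{-D}f$ maps rationals to rationals iff $f$ does. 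Fix any infinite run of $M$ on input $\alpha$, and let $\beta$ be the output it writes, so $\overline{\beta} = f(\overline{\alpha}) = f(q)$.

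The key step is a pigeonhole argument on the run. Reading $\alpha = \rho \, \pi^\omega$, after the initial segment $\rho$ the transducer is in some state; then for each subsequent copy of $\pi$ it reads, it passes from some state $e$ to some state $e'$ and writes an output block of length $|\pi|$. Since there are only finitely many states, there exist indices $k < k'$ such that the transducer is in the same state just before reading the $k$-th and the $k'$-th copies of $\pi$ (counting copies after $\rho$). Consider then the modified input $\alpha' := \rho \, \pi^{k-1} \, (\pi^{k'-k})^\omega$; because the transducer is memoryless and in the same state at those two points, there is an infinite run of $M$ on $\alpha'$ that agrees with the original run up through the $(k-1)$-st copy of $\pi$ and thereafter cycles, writing an \emph{eventually periodic} output $\beta'$ whose periodic block has length $(k'-k)|\pi|$. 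But $\overline{\alpha'} = \overline{\alpha} = q$ (both name the same rational, since $\pi^\omega$ and $(\pi^{k'-k})^\omega$ are the same infinite string), so by the definition of $M$ computing $f$ we have $\overline{\beta'} = f(q)$. An eventually periodic binary string represents a rational, hence $f(q) \in \mathbb{Q}$.

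The main obstacle — really the only subtlety — is the bookkeeping around representations and delay: one must be careful that $\overline{\alpha'}$ genuinely equals $q$ after the surgery (it does, since we are just regrouping the periodic tail of the \emph{same} infinite sequence, not altering any bit), and that the ``same state before the $k$-th and $k'$-th copies'' pigeonhole is applied at block boundaries so that the spliced run is a legitimate run of $M$. One should also note that the argument uses only \emph{some} infinite run and the requirement that every infinite run outputs a representation of $f(q)$; the existence of an infinite run is guaranteed by the definition of nondeterministic transducer computation. This handles the nondeterministic case and, a fortiori, the deterministic one. I would state it for the standard binary representation; the identical argument works verbatim for the signed binary representation, since eventually periodic signed-binary strings also represent rationals.
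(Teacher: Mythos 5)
Your proposal is correct and uses essentially the same idea as the paper's (the paper gives only a two-sentence sketch: pigeonhole on states while the automaton reads the periodic tail of the input, forcing an eventually periodic output). Your version fills in the details the paper omits, in particular the splicing needed to produce a legitimate eventually periodic run in the nondeterministic case and the reduction eliminating the delay, all of which are handled correctly.
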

\begin{proof}
On input $(c_1c_2 \ldots c_k (u_1 \ldots u_k)^{\omega})\oplus(d_0d_1\cdots)$, 
the automaton must visit the same accepting state $q$ infinitely often. This means that there is some $i$ such that the automaton is in state $1$ after scanning infinitely many prefixes of the input that ends with $u_i$ (or $u_id_n$ for some $n$). Let $\sigma\oplus\tau$ and $(\sigma*\eta)\oplus(\tau*\nu)$ be two such prefixes. Then the automaton must also accept the input $(\sigma*\eta^\omega)\oplus(\tau*\nu^\infty)$. Obviously $\tau*\nu^\infty$ is the representation of a rational number.
\end{proof}

It follows from the proposition that if $f(x) = ax+b$ is regular, then $a, b \in \mathbb{Q}$.

\begin{lemma}\label{cor:piecewiselinearregular}
Suppose $f\colon  [0,1] \rightarrow \mathbb{R}$ is piecewise linear such that the breakpoints have dyadic coordinates. Then
$f$ is regular.
\end{lemma}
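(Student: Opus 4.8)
The plan is to reduce the general piecewise linear case to two basic building blocks and then close under the operations that preserve regularity. First I would observe that a function $f$ which is piecewise linear with dyadic breakpoints on $[0,1]$ can be written, on each of the finitely many dyadic subintervals $[d_k, d_{k+1}]$, as $f(x) = a_k x + b_k$ where $a_k, b_k$ are rationals (they are determined by the dyadic data, so in fact they have the form $p/2^m$ up to a bounded denominator coming from the interval lengths; in any case they are rational, and by Proposition \ref{Q->Q} they must be rational for $f$ to have any chance of being regular, so this is the right setting). Since the breakpoints are dyadic, an automaton can, using finitely many states, detect from the input expansion $\alpha$ which subinterval $\overline{\alpha}$ lies in (the same bookkeeping used in Example \ref{easy:exampledisc}), and then simulate the appropriate affine automaton on the remaining bits; so it suffices to handle the case of a single affine function $x \mapsto ax+b$ with $a,b \in \mathbb{Q}$ and $0 \le ax+b \le 1$ on the relevant interval.

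Next I would build the affine automaton in two stages. The core claim is that for fixed $p, q \in \mathbb{N}$ there is a Büchi automaton recognizing the graph of $x \mapsto \frac{p}{q} x$ (restricted to the appropriate domain and range): the automaton reads $\alpha \oplus \beta$ and maintains, as its state, the ``carry'' or remainder $r \in \{0, 1, \dots, q-1\}$ representing the quantity $q \cdot (\overline{\beta\!\upharpoonright\! n}) - p \cdot (\overline{\alpha\!\upharpoonright\! n})$ suitably scaled — i.e., it checks the digit-by-digit multiplication relation $q y = p x$ with a bounded register. Reading input bit $a$ and output bit $b$, it updates the remainder by $r \mapsto 2r + pb - qa$ (or the analogue depending on how one aligns the $2^{-n-1}$ weights), and the set of accepting states is the set of remainders that can still converge to $0$; the key arithmetic point is that this register stays bounded precisely because $p/q$ is a fixed rational, and the limit condition ``the tail remainder tends to $0$'' is exactly an $\omega$-regular (indeed Büchi-recognizable) condition on which remainders are visited infinitely often. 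Adding a rational constant $b = s/t$ is handled similarly by folding $t$ into the modulus, or more cheaply by noting that $x \mapsto x + b$ for dyadic-friendly $b$, composed with scaling, stays in the regular class; alternatively one composes with the observation that translation by a rational is regular by the same remainder-tracking device. Finally, restricting to a dyadic subinterval and gluing the finitely many pieces is done by the interval-detection argument above, taking a product/union of the finitely many automata and managing the boundary dyadic points (which have two expansions) exactly as in the nondeterministic constructions already used in Section \ref{sec:regfns}.

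The step I expect to be the main obstacle is verifying that the ``remainder register'' for multiplication by $p/q$ genuinely stays within a finite set and that the acceptance condition (which remainders may appear infinitely often) correctly captures equality of the two real numbers rather than merely equality of some digit strings — in other words, handling the non-injectivity of binary representation at dyadic points, where $0111\dots$ and $1000\dots$ name the same real. This is the same subtlety flagged in the discussion after Definition \ref{def:regularbase2} and in the proof of Lemma \ref{lemlip}, and I would dispatch it by allowing the automaton to be nondeterministic (guessing which of the two possible output expansions to produce when the exact value is dyadic) and by checking directly that any accepting run forces $\overline{\beta} = \frac{p}{q}\overline{\alpha} + b$, using the trivial observation from Section \ref{sec:representation} that if two $n$-bit prefixes differ by more than $2^{-n}$ their infinite extensions name distinct reals. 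Everything else — the finiteness of the number of pieces, the boundedness of all the integer parameters, and the closure under finite union — is routine once the single-affine-piece automaton is in hand.
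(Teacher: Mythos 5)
Your proof is correct and takes a genuinely different route from the paper's sketch. The paper factors the single affine piece as addition of a dyadic constant (bounded carry, since the effect of $+d$ depends only on finitely many bits plus a carry), followed by a shift by $2^{-k}$ and integer multiplication by $m$ (bounded lookahead) — i.e., it handles slopes of the form $m/2^{k}$. Your approach instead builds one deterministic carry automaton for the whole affine relation: writing the target piece as $qt\,y - pt\,x - sq = 0$, you track the integer register $r_n = 2^n\bigl(qt\,\overline{\beta\!\upharpoonright n} - pt\,\overline{\alpha\!\upharpoonright n} - sq\bigr)$, which obeys $r_{n+1} = 2r_n + qt\,\beta(n) - pt\,\alpha(n)$ and stays in the finite window $|r_n| < pt+qt$ precisely when the relation holds; acceptance is the safety condition that the register never escapes this window. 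This is more uniform and, as stated, strictly more general than the paper's sketch: dyadic breakpoints can yield non-dyadic rational slopes (e.g.\ $(0,0)$ and $(3/4,1/2)$ give slope $2/3$), which your modulus-$q$ register handles directly but the paper's $m/2^k$ decomposition does not cover without further work (the paper defers to \cite{vardi} for full generality; for its one application in Theorem~\ref{thm:mainreg} the slopes are $0,\pm1$, so the gap is harmless there). You are also more explicit about the two expansions of dyadics, handled by nondeterministic guessing — the paper's sketch is silent on this. Two small points worth tightening if you flesh this out: the update should be $2r + q\beta(n) - p\alpha(n)$ (with the weights matching the definition of $r$, not $pb - qa$ as you wrote), and the acceptance condition is really a safety condition ``the register never leaves the bounded window,'' which is Büchi-expressible by making all in-window states accepting and routing overflows to a rejecting sink — your phrase ``remainders that can still converge to $0$'' gestures at this but could be read as a liveness condition, which it isn't.
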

\begin{proof}
This simple fact seems to be folklore among experts in the subject; see, e.g., \cite{Kon, Lisovik1, reeeg} for similar results, and see Theorems 4 and 5 in \cite{vardi} for more general results which imply the proposition. Thus,
we  give only a sketch.
First, observe that a linear function of the form  $x + d$, where $d$ is dyadic identified with a finite string representing it, is regular. This is because the result of  adding $d$ to any binary sequence depends only on the first $length(d)+1$ many bits of $x$, and the rest is handled using a suitable carry. This process is regular.
For a function of the form $\frac{m}{2^k}(x + d)$, use that $x + d$ is regular, and also that the operation $\frac{1}{2^k} \cdot$ is just a shift of the representation by $k$ positions to the left, while multiplication by $m$ involves only looking at most $m$ digits ahead. \end{proof}

\subsection{Linear time computability}

Let $\mathbb{D}_2$ be the set of all dyadic rationals between $0$ and $1$, i.e.~all rationals of the form $\{\frac{n}{2^m}\mid n\leq 2^m$ and $m\in\omega\}$. Each dyadic rational $r$ can be represented by a finite binary string $\sigma$ such that $\overline{\sigma}=r$. This is extended in the obvious way to base $p$ for any $p>2$; i.e., each element of $\mathbb{D}_p=\{\frac{n}{p^m}\mid n\leq p^m$ and $m\in\omega\}$ can be represented by some $\sigma\in p^{<\omega}$. Each rational $r\in [0,1]$ can be represented in the binary almost periodic representation $r=\overline{\sigma*\tau^\omega}$ for some $\sigma,\tau\in 2^{<\omega}$.

We adopt the approach from \cite{Ko} and define complexity classes for continuous functions via bounding resources used by Turing functionals which represent the functions. We present a much more natural equivalent reformulation of this notion which works in the Lipschitz case (see  \cite[Corollary 2.21]{Ko}).

\begin{definition}
A Lipschitz function $f\colon [0,1] \rightarrow \mathbb{R}$ is said to be \emph{polynomial-time computable} if and only if there is a polynomial $p\colon \omega\rightarrow\omega$ and a function $\psi\colon 2^{<\omega}\times\omega\rightarrow 2^{<\omega}$ such that for every $\sigma$ and $n$,
\[\left|\overline{\psi(\sigma,n)}-f(\overline{\sigma})\right|<2^{-n}\]
and $\psi(\sigma,n)$ can be computed by a multi-tape deterministic Turing machine in $p\left(|\sigma|+n\right)$ many steps.
The Lipschitz function $f$ is said to be \emph{linear-time computable} if $p$ can be taken to be $O(|\sigma|+n)$. Clearly, each polynomial-time computable Lipschitz function is computable.
\end{definition}

We note that Kawamura, Steinberg, and Thies~\cite{kawamurafeeb} have  defined   linear-time computability for functions that are not necessarily Lipschitz. The restriction of their definition to the Lipschitz case is equivalent to our notion.


\begin{definition}
Let $f: [0,1] \rightarrow \mathbb{R}$ be Lipschitz such that $f\left[\mathbb{D}_2\cap[0,1]\right]\subseteq \mathbb{Q}$. We say that $f$ is \emph{pointwise linear-time computable} if, given any dyadic representation of $d$, we can compute a binary almost periodic representation of the output $f(d)$ in linear time.
  More formally,
there is a function $\psi: 2^{<\omega}\rightarrow 2^{<\omega}\times 2^{<\omega}$
such that for all $\sigma\in 2^{<\omega}$, $\overline{\tau*\eta^\omega}=f\left(\overline{\sigma}\right)$, 
$\psi(\sigma)=(\tau,\eta)$, and $\psi(\sigma)$ can be computed by a multi-tape deterministic Turing machine in $O(|\sigma|)$ many steps. \end{definition}


In  Proposition \ref{lemma:kukudeterminsticnondeterministic} we will show that every regular continuous $f$ can be computed by a deterministic transducer in signed binary representation. A simple consequence of this fact  is the following:

\begin{fact}\label{linearfact}
Every regular $f\in C[0,1]$ is pointwise linear-time computable.
\end{fact}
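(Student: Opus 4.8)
The plan is to reduce Fact~\ref{linearfact} to the already-promised Proposition~\ref{lemma:kukudeterminsticnondeterministic}, which gives us a deterministic transducer $M$ (with some fixed delay $D$) computing $f$ in the signed binary representation. The point is that a deterministic finite-state transducer with delay $D$, reading one input symbol and producing one output symbol per step after the initial $D$-symbol warm-up, runs in linear time essentially by definition; the only work is to translate this into the binary-representation, dyadic-input, almost-periodic-output framework required by the definition of pointwise linear-time computability.

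First I would handle the input side. Given a dyadic rational $d\in[0,1]$ presented as a finite binary string $\sigma$ with $\overline{\sigma}=d$, note that $d=\overline{\sigma*0^\omega}$, so a binary representation of $d$ is $\sigma$ followed by $0$s; this is also trivially a signed binary representation of $d$. Feed this into $M$. After the first $D$ steps $M$ begins writing its output, one signed-binary digit per input digit read. Since $f(d)$ is rational (Propositions~\ref{Q->Q} and the analysis behind Proposition~\ref{lemma:kukudeterminsticnondeterministic}, or directly: the input $\sigma*0^\omega$ is eventually periodic with period $1$, so $M$ cycles through a fixed sequence of states from some point on), the output stream is eventually periodic: there are $k_0\le k_1\le |Q|$ (bounded by the number of states of $M$) such that after reading $k_0$ zeros past $\sigma$, $M$ is in a state it revisits after reading $k_1-k_0$ further zeros, producing a fixed block of output digits in between. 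So within $O(|\sigma|)+O(1)$ steps we have produced a signed-binary string of the form $\tau'*(\eta')^\omega$ representing $f(d)$, with $|\tau'|,|\eta'|=O(|\sigma|)$.

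Next I would convert this eventually-periodic signed-binary representation of $f(d)$ into an eventually-periodic \emph{standard} binary representation, as the definition of pointwise linear-time computability demands an output $(\tau,\eta)$ with $\overline{\tau*\eta^\omega}=f(d)$. This is a finite-state/linear-time operation: as noted in the remark after Definition~\ref{def:sbr} and the proof sketch of Lemma~\ref{cor:piecewiselinearregular}, converting signed binary to binary amounts to resolving carries, which a transducer can do with bounded lookahead; applied to an eventually periodic string $\tau'*(\eta')^\omega$ it produces an eventually periodic binary string $\tau*\eta^\omega$, and one can read off $(\tau,\eta)$ in time linear in $|\tau'|+|\eta'|=O(|\sigma|)$. (Alternatively, once we know $f(d)$ is rational with denominator bounded in terms of the transducer, we can compute its binary expansion directly by long division in linear time.) Packaging: the composite map $\sigma\mapsto(\tau,\eta)$ runs on a multi-tape deterministic Turing machine in $O(|\sigma|)$ steps, which is exactly pointwise linear-time computability. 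Since $f$ is Lipschitz by Lemma~\ref{lemlip} and maps dyadics to rationals by Proposition~\ref{Q->Q}, all the hypotheses of the definition are met.

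The main obstacle is bookkeeping rather than mathematical depth: one must verify that the eventual period lengths $k_1-k_0$ (and the corresponding output block lengths, accounting for the delay $D$) are genuinely bounded by a constant depending only on $M$, so that the total running time is $O(|\sigma|)$ and not, say, $O(|\sigma|^2)$; and one must be careful that the signed-to-binary carry resolution does not blow up on the periodic tail (it does not, since the carry is a bounded integer and hence itself becomes periodic). Both points follow from finiteness of the state set together with the fact, already established en route to Proposition~\ref{lemma:kukudeterminsticnondeterministic}, that $f(d)$ is rational, so there is no real difficulty here once that proposition is in hand.
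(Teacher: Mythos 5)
Your argument is essentially the paper's: obtain the deterministic signed-binary transducer $M$ from Proposition~\ref{lemma:kukudeterminsticnondeterministic}, run it on an eventually periodic input representing $d$, and use the pigeonhole principle on the states of $M$ to extract a preperiod/period pair of the output in $O(|\sigma|)$ time. (The paper runs $M$ on $\sigma^\omega$ rather than on $\sigma*0^\omega$, to match the literal $\overline{\sigma^\omega}$ in the definition of pointwise linear-time computability, but the pigeonhole step is identical.) Where you go further than the paper is in spelling out the conversion from the signed-binary output $\tau'*(\eta')^\omega$ to the ordinary-binary pair $(\tau,\eta)\in 2^{<\omega}\times 2^{<\omega}$ that $\psi$ must return; the paper's proof stops at the signed-binary output $\eta\tau^\omega$ and leaves this step implicit.

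However, both of the justifications you sketch for that conversion are incorrect as stated. First, no finite-state transducer, even with bounded lookahead, converts signed binary to standard binary: the introduction explicitly says that signed binary can be computed from binary by a transducer ``but not vice versa,'' and indeed the first output bit on reading $1(-1)0^k(\pm 1)\cdots$ depends on the sign of the $\pm 1$, forcing unbounded delay. Second, the denominator of $f(d)$ is \emph{not} bounded in terms of $M$ alone --- only its odd part is; the power-of-two factor is $2^{|\tau'|}$ with $|\tau'|=O(|\sigma|)$, so bit-by-bit long division would cost $O(|\sigma|^2)$ steps. The conclusion does hold, but for a different reason: compute the signed integers $A=2^{|\tau'|}\sgnd{\tau'}$ and $B=2^{|\eta'|}\sgnd{\eta'}$ in $O(|\sigma|)$ time by a single right-to-left carry scan; then $f(d)=A/2^{|\tau'|}+2^{-|\tau'|}\cdot B/(2^{|\eta'|}-1)$, and after replacing $(A,B)$ by $(A-1,\,2^{|\eta'|}-1+B)$ when $B<0$, the pair $(\tau,\eta)$ is just the $|\tau'|$-bit and $|\eta'|$-bit binary representations of $A$ and $B$ respectively, with no carries crossing between the two blocks. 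This offline computation is $O(|\sigma|)$.
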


\begin{proof}
Each regular $f\in C[0,1]$ is Lipschitz (Lemma~\ref{lemlip}) and maps each dyadic rational to a rational number (Proposition~\ref{Q->Q}). By Proposition \ref{lemma:kukudeterminsticnondeterministic}, each regular $f\in C[0,1]$ can be computed by a deterministic transducer $M$ for the signed binary representation.  

Let $D$ be the delay of $M$ and $m$ be the number of states in $M$.  Let $q_0, q_1, \dots, q_m$ be states of $M$ such that when $M$ is run on $\sigma^\omega$, $M$ takes state $q_i$ at step $D + 1 + i|\sigma|$.  By the pigeonhole principle, there is $0 \le i < j \le m$ with $q_i = q_j$.  Let $\eta$ be the string written by $M$ running on $\sigma^\omega$ at steps $D+1$ through $D+1+i|\sigma|$, and let $\tau$ be the string written by $M$ at steps $D+1+i|\sigma|+1$ through $D+1+j|\sigma|$.  Then~${M(\sigma^\omega) = \eta\tau^\omega}$.  Furthermore, $\eta$ and $\tau$ can be computed from $\sigma$ by running $M(\sigma^\omega)$ for ${D+1+m|\sigma|}$~steps and recording the relevant information in a table of size $m$ and then searching that table for a repeat.  As $m$ is constant, this runs in $O(|\sigma|)$ time.
%
%
\end{proof}

Clearly, every pointwise linear-time computable function is linear-time computable; our next goal is to explain why not every linear-time computable function  $f\left[ \mathbb{D}_2\cap[0,1]\right]\subseteq \mathbb{Q}$ is pointwise linear-time computable.
As mentioned above, Kawamura, Steinberg, and Thies~\cite{kawamurafeeb} have defined linear time computability for not necessarily Lipschitz functions.
The idea is that the computation of $f(x)$ with precision $2^{-n}$ uses only $O(n)$ bits of $x$ and takes only $O(n)$ steps. This idea is formalised using second-order polynomials; we omit the formal definition (see \cite{kawamurafeeb}). The crucial difference with pointwise
linear time computability is that $f(d)$ only has to be computed with a good enough precision, even for a dyadic rational $d$.
So, for instance, if $f(0) = 0.000\ldots 001$, then we might be unable to output the exact answer very quickly because it would take too much space, but we can always output $0$ quickly and say that it is the best approximation we can get within the provided time limits. This simple idea is exploited in the proof of the proposition below.

\begin{proposition}\label{prop:counterexlinear} There is a function $f\colon [0,1]\rightarrow [0,1]$ such that $f$ is Lipschitz, $f\left[ \mathbb{Q}\cap[0,1]\right]\subseteq \mathbb{Q}$, and linear-time computable but not pointwise linear-time computable.
\end{proposition}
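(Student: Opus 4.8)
The idea, as hinted in the paragraph preceding the statement, is to build a Lipschitz $f$ whose values at dyadic rationals are rationals whose \emph{exact} almost-periodic representation is very expensive to write down, even though $f$ itself is perfectly tame analytically. First I would fix a sufficiently hard (but still computable) function on dyadics, say by choosing for each $n$ a dyadic $d_n$ of binary length roughly $n$ and declaring $f(d_n)$ to be a rational whose almost-periodic representation $\tau\eta^\omega$ has $|\tau|+|\eta|$ of length super-linear in $n$ — for concreteness one can make $f(d_n)$ have a terminating binary expansion of length about $n^2$, so that any almost-periodic representation of it needs $\Omega(n^2)$ bits. To make this coherent I would actually construct $f$ as a piecewise-linear function: on each dyadic subinterval I interpolate linearly between prescribed dyadic values at the endpoints, with all slopes bounded by a fixed constant (so $f$ is Lipschitz with a uniform constant, hence everywhere-defined and continuous), and arrange that the "bad" value $f(d_n) \approx 0.\underbrace{00\cdots0}_{}1$ with a $1$ in position $\sim n^2$ occurs at some dyadic $d_n$ with $|d_n|\sim n$.

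\textbf{Key steps, in order.} (1) Describe the construction precisely: a nested sequence of dyadic grids, with $f$ defined by linear interpolation on the finest relevant pieces, and an explicit rule placing the expensive values $f(d_n)$. Verify it is well-defined on all of $[0,1]$ and Lipschitz with a constant independent of $n$ (this is immediate once all slopes are bounded by a fixed $K$). (2) Verify $f[\mathbb{Q}\cap[0,1]]\subseteq\mathbb{Q}$: since $f$ is piecewise linear with dyadic (hence rational) slopes and dyadic intercepts on each piece, the image of any rational is rational. (3) Verify $f$ is linear-time computable in the Kawamura–Steinberg–Thies sense: to approximate $f(x)$ to precision $2^{-n}$ we only need $O(n)$ bits of $x$ to locate the relevant linear piece (because the Lipschitz bound controls how fine a piece matters), and then we evaluate a linear function with dyadic coefficients truncated to $O(n)$ bits — an $O(n)$-step computation; crucially we output only an $O(n)$-bit \emph{approximation}, never the exact value. (4) Verify $f$ is \emph{not} pointwise linear-time computable: if it were, then on input (a binary string $\sigma$ for) $d_n$ the machine would output in $O(|\sigma|)=O(n)$ steps a pair $(\tau,\eta)$ with $\overline{\tau\eta^\omega}=f(d_n)$; but $f(d_n)$ was chosen so that every almost-periodic representation has total length $\Omega(n^2)$, so merely writing the output takes $\Omega(n^2)$ steps — contradiction. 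One must be slightly careful about which dyadic representation $\sigma$ of $d_n$ the adversary hands the machine, but since all representations of $d_n$ have length $\ge$ the length of its terminating one, and we can pick $d_n$ with short terminating representation, the lower bound $|\sigma| = O(n)$ against $|\tau|+|\eta| = \Omega(n^2)$ is robust.

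\textbf{Main obstacle.} The delicate point is making the two computability claims \emph{simultaneously} true with the \emph{same} explicit $f$: the function must be complicated enough at dyadics to defeat pointwise-linear-time (step 4) yet simple enough, as a continuous function, to admit genuine linear-time approximation in the KST sense (step 3). The tension is resolved by the observation italicised in the paragraph before the proposition — approximate computation is allowed to discard expensive low-order bits — but turning this into a watertight argument requires pinning down the interpolation data so that (a) locating the active linear piece from $O(n)$ input bits is correct despite the accumulation of many nested grids, and (b) the exact rational $f(d_n)$ provably has no cheap almost-periodic representation. I would handle (b) by choosing $f(d_n)$ to be a dyadic rational with terminating expansion of length exactly $\ell(n)$ for a super-linear $\ell$ (e.g. $\ell(n)=n^2$): a terminating dyadic of that length, being nonzero only in its last bit, forces any $\tau\eta^\omega$ representation to have $|\tau|\ge \ell(n)-1$ (one cannot shorten it into a genuine period since the tail is all zeros only from position $\ell(n)$ on), giving the clean quadratic-versus-linear contradiction.
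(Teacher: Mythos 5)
Your proposal is correct and follows essentially the same approach as the paper's proof. The paper's explicit construction is the tent function $\tilde f$ supported on the disjoint intervals $\tilde J_n = (2^{-n}-2^{-n^2},\,2^{-n}+2^{-n^2})$ with peak $\tilde f(2^{-n}) = 2^{-n^2}$ and slopes $\pm 1$; this is exactly your ``dyadic $d_n$ of length $\sim n$ mapped to a dyadic of terminating length $\sim n^2$'' recipe, with $d_n = 2^{-n}$. The one place your sketch is slightly loose is in step (3): in the paper's definition of linear-time computability the machine receives \emph{all} of $\sigma$ (not a prefix of a real), so the crucial trade-off is not about reading few bits of $x$ but rather the dichotomy that the paper states explicitly: if $\overline{\sigma}\in\tilde J_n$, then either $\overline{\sigma}=2^{-n}$ (so $|\sigma|\approx n$, and the long output $2^{-n^2}$ is only demanded when the requested precision $m$ satisfies $m\ge n^2$, giving time budget $O(m)\ge O(n^2)$) or $|\sigma|>n^2$ (so the time budget $O(|\sigma|)$ already covers the arithmetic). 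Pinning the tent widths to $2^{-n^2}$ is exactly what guarantees this dichotomy and makes your item (a) ``locating the active piece'' go through cleanly; any choice of widths with the analogous property would work equally well.
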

\begin{proof}
Let $(\tilde{J}_n)_{n\geq 3}$ be the sequence of intervals defined by $\tilde{J}_n=\left(2^{-n}-2^{-n^2},2^{-n}+2^{-n^2}\right)$. Define
\[\tilde{\delta}(\sigma)=\begin{cases}
1^n&\text{if $\overline{\sigma}\in \tilde{J}_n$ for some $n>2$},\\
0 &\text{if $\displaystyle\overline{\sigma}\not\in\bigcup_{n=3}^\infty \tilde{J}_n$.}
\end{cases}\]
Then we claim that $\tilde{\delta}(\sigma)$ can be computed in $O(|\sigma|)$ many steps. To see this, note that once we have $1^n$~on a tape, we can generate $1^{n^2}$ in $O(n^2)$ many steps, so we can determine whether $\sigma(j)=0$ for every~${n<j<\min\{|\sigma|,n^2+1\}}$ in~$O(|\sigma|)$ many steps.

Now define $\tilde{f}$ to be the function
\[\tilde{f}(x)=\begin{cases}
x+\left(2^{-n^2}-2^{-n} \right) &\text{if $\displaystyle x\in \left(2^{-n}-2^{-n^2},2^{-n}\right)$ for some $n\geq 3$,}\\
-x+\left( 2^{-n^2}+2^{-n}\right) &\text{if $\displaystyle x\in \left[2^{-n}, 2^{-n}+2^{-n^2}\right)$ for some $n\geq 3$,}\\
0 &\text{if $\displaystyle x\not\in\bigcup_{n=3}^\infty \tilde{J}_n$.}
\end{cases}
\]
This $\tilde{f}$ is clearly Lipschitz and maps rationals to rationals. Since $\tilde{f}(2^{-n})=2^{-n^2}$, $\tilde{f}$ clearly cannot be pointwise linear-time computable. We now argue that $\tilde{f}$ is linear-time computable. First observe that if $\overline{\sigma}\in \tilde{J}_n$, then either $\overline{\sigma}=2^{-n}$ or $|\sigma|> n^2$. Our goal is to show that given any $\sigma\in 2^{<\omega}$ and any $m$, we need to produce $\psi(\sigma,m)$ in $O(|\sigma|+m)$ steps.

It takes $O(|\sigma|)$ steps to first evaluate $\tilde{\delta}(\sigma)$ and to check if $\overline{\sigma}\in \tilde{J}_n$ for some $n$. It then takes $O(|\sigma|+n^2+m)$ steps to carry out the operations required to compute some value for $\psi(\sigma,m)$ close enough to $\tilde{f}(\overline{\sigma})$. This is obviously fine if $\overline{\sigma}\in\tilde{J}_n-\{2^{-n}\}$. 

In the case $\overline{\sigma}=2^{-n}$, $\tilde{f}(\overline{\sigma})=2^{-n^2}$. For each $m>n$, we have to generate $0^m$ if $m<n^2$ and $0^{n^2-1}*1$ if $m\geq n^2$ in $O(m)$ steps. But given both $1^n$ and $1^m$, it takes $O(m)$ many steps to compare~$m$ with~$n^2$ and to generate the desired output.
\end{proof}

\section{Proof of Theorem~\ref{thm:kuku}}\label{sec:kuku}

We actually prove a stronger result than Theorem \ref{thm:kuku} here: Theorem \ref{thm:kuku} only contains the equivalence of statements (i) and (ii) in Theorem \ref{thm:kuku1}. The further equivalence of statement (iii) of this theorem lets us see that nondeterminism is nothing but a pathology of the standard binary representation.

\begin{theorem}\label{thm:kuku1}
Suppose $f\colon [0,1]\rightarrow [0,1]$. The following are equivalent.
\begin{itemize}
\item[(i)] $f$ is continuous regular.
\item[(ii)] $f$ can be computed by a nondeterministic transducer for the standard binary representation.
\item[(iii)] $f$ can be computed by a deterministic transducer for the signed binary representation.
\end{itemize}
\end{theorem}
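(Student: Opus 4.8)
The plan is to establish the cycle of implications (i) $\Rightarrow$ (iii) $\Rightarrow$ (ii) $\Rightarrow$ (i), using the machinery developed in Section~\ref{sec:regfns} for the hard direction and fairly direct arguments for the others.

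\smallskip

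\emph{Step 1: (i) $\Rightarrow$ (iii).} This is the main technical content, and it will be isolated as Proposition~\ref{lemma:kukudeterminsticnondeterministic} (referenced several times already in the excerpt). Assume $f$ is continuous regular, witnessed by a deterministic B\"uchi automaton $A$ as provided by Lemma~\ref{lem:deterministic}, with $N$ states. By Lemma~\ref{lemlip}, $f$ is Lipschitz, say with constant $2^C$ where $C = D+1$ in the notation of Claim~\ref{claim:kukumain}. The idea is to build a deterministic transducer $M$ in the signed binary representation with delay $D$ (or a suitable finite delay): on reading a sufficiently long prefix $\sigma$ of the input, we use Claim~\ref{claim:kukumain} to extract from $A$ a sequence $\eta_0 \subset \eta_1 \subset \dots \subset \eta_{2N}$ of comparable strings all landing on a common accepting state; by Claim~\ref{claim:kuku1}, the value $\overline{\eta_0 \upharpoonright (i+4)}$ pins down $f(\overline{\sigma})$ to within $2^{-i}$ regardless of how the input is continued. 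The point of switching to the \emph{signed} binary output is precisely the remark in the introduction: in signed binary, a rational approximation good to $2^{-i}$ can always be committed to as genuine output digits (there is no ambiguity between $0111\ldots$ and $1000\ldots$ that cannot be repaired by a later $-1$), so $M$ can deterministically emit output digits from the approximations $\overline{\eta_0 \upharpoonright (i+4)}$ as $i \to \infty$. The delay $D$ absorbs the number of input bits Claim~\ref{claim:kukumain} needs to scan before the first batch of output digits is safe to write. The main obstacle here is bookkeeping: one must check that the successive approximations are consistent enough that their signed-binary digit commitments never need to be retracted, i.e.\ that the overlap error is controlled by the Lipschitz bound and by the $2^{-i-4}$ estimate in Claim~\ref{claim:kuku1}; this is where the constants $C$, $D$, and the choice of output precision have to be tuned against each other. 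I expect this consistency check to be the hard part of the whole theorem.

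\smallskip

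\emph{Step 2: (iii) $\Rightarrow$ (ii).} Here I would use that the signed binary representation is computable from the standard binary representation by a transducer (a finite-delay device rewrites a standard binary stream as a signed binary stream, reading boundedly far ahead), while the converse conversion is only realizable nondeterministically — but that is exactly what we are allowed in (ii). Concretely: precompose $M$ with a nondeterministic standard-to-signed-binary conversion transducer at the input, and postcompose with a (nondeterministic, finite-delay) signed-to-standard-binary conversion transducer at the output. The composition of finitely many transducers, at least one of which is nondeterministic, is again a nondeterministic transducer; one checks that the ``die on a bad guess'' mechanism of the input converter guarantees that every infinite run corresponds to an actual signed-binary name of $\overline{\alpha}$, so by determinism of $M$ the composed device computes $f$ in the sense of the nondeterministic-transducer definition. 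The only subtlety is handling the non-injectivity of the representations: different signed names of the same real may be produced on different runs, but since $M$ computes a genuine function and the output converter is sound, all infinite runs still produce standard binary names of the single value $f(\overline{\alpha})$.

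\smallskip

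\emph{Step 3: (ii) $\Rightarrow$ (i).} Given a nondeterministic transducer $M$ (with finite delay $D$) computing $f$ in standard binary, I would build a nondeterministic B\"uchi automaton $A$ accepting $\{\alpha \oplus \beta : f(\overline{\alpha}) = \overline{\beta}\}$. The automaton reads the interleaved input $\alpha_0\beta_0\alpha_1\beta_1\ldots$, simulates a run of $M$ on $\alpha$ (guessing nondeterministic choices), and on the side checks that the output $M$ writes agrees with $\beta$ bit by bit (accounting for the fixed delay $D$ by buffering $D$ bits, which costs only finitely many states). It declares the run accepting infinitely often exactly when the simulated $M$-run is infinite (e.g.\ by making every state accepting, since nondeterministic transducer runs on an infinite input are infinite unless they die, and a dying run simply has no accepting continuation). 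Soundness and completeness follow from the definition of a nondeterministic transducer computing $f$: $\overline{\beta} = f(\overline{\alpha})$ iff some infinite $M$-run on $\alpha$ outputs a name of $f(\overline{\alpha})$, and the only names that can be checked digitwise against an arbitrary $\beta$ are those literally equal to $\beta$ — but here one must again handle the $0111\ldots$ vs.\ $1000\ldots$ issue, which is done by also allowing $A$ a nondeterministic branch that checks equality of the two reals rather than of the two strings (only finitely many states are needed to track "we have committed to $\ldots 0 1^k$ on one side and $\ldots 1 0^k$ on the other"). Continuity of $f$ is automatic from Proposition~\ref{cont}, so $f$ is continuous regular. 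This direction is mostly routine automaton bookkeeping; the one place to be careful is, as in Step~2, the string-versus-real mismatch at dyadic rationals.
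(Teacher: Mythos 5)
Your overall structure — the cycle (i) $\Rightarrow$ (iii) $\Rightarrow$ (ii) $\Rightarrow$ (i) — matches the paper exactly, and your Steps 2 and 3 are essentially the paper's arguments. Step 2 is actually over-engineered: since standard binary is a sublanguage of signed binary, the deterministic signed-binary transducer of (iii) already reads any standard binary input unchanged, so one only needs the nondeterministic signed-to-standard conversion at the output, not an input converter as well. Your Step 3 is correct in spirit; the paper's automaton $A_0$ implements precisely the ``track the $01^k$ vs.\ $10^k$ commitment in finitely many states'' idea you describe, via the three flags $L$, $R$, $E$ together with a single nonaccepting sink state.

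The genuine gap is in Step 1, and it is not where you think it is. You locate the difficulty entirely on the \emph{output} side (``a rational approximation good to $2^{-i}$ can always be committed to as genuine output digits $\ldots$ that's where the constants have to be tuned''). But in (iii) the deterministic transducer works in signed binary on the \emph{input} tape as well, whereas the B\"uchi automaton $A$ from Lemma~\ref{lem:deterministic} and the trees $T_\sigma$, $\hat T_\sigma$ of Claim~\ref{claim:kukumain} are all defined against a \emph{standard} binary input. Your sketch feeds the raw input prefix $\sigma$ into $T_\sigma$, which is a type mismatch: what does $T_\sigma$ mean when $\sigma \in \{-1,0,1\}^{<\omega}$? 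The conversion from signed to standard binary cannot be done online, because a long block of $0$s followed by a $-1$ forces a borrow arbitrarily far back. This is exactly where the paper spends its effort: the construction of $A_d$ in the proof of Proposition~\ref{lemma:kukudeterminsticnondeterministic} simultaneously tracks the state of the simulated device on the two standard-binary strings $\Dy(\eta)$ and $\Dy(\eta\ast(-1))$, records the relative displacement $\iota \in \{-1,0,1\}$ of the two candidate outputs, and uses Claims~\ref{claim:kukurelativeposition}--\ref{claim:kukucollapse} to bound how badly they can drift apart (in particular that three consecutive one-sided nondeterministic transitions cannot occur). None of this appears in your plan, and no amount of ``tuning $C$, $D$, and the output precision'' will substitute for it, because the obstruction lives on the input side, not in the output commitment. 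Relatedly, the paper does not build $A_d$ directly from $A$; it first constructs a nondeterministic transducer $\hat A$ in standard binary (Proposition~\ref{reg->tr}) and then lets $A_d$ simulate $\hat A$ on $\Dy(\eta)$ and $\Dy(\eta\ast(-1))$. Going through $\hat A$ matters because it packages the Lipschitz/pumping machinery (Claims~\ref{claim:kukumain} and~\ref{claim:kuku1}) into the two-runs-alive invariant (the transition types T(i)--T(iii)) that the $\iota$-bookkeeping depends on.
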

\begin{proof}

(i) $\Rightarrow$ (iii): We delay the proof of this direction to Proposition \ref{lemma:kukudeterminsticnondeterministic} in the next subsection; it is highly technical and involves a great deal of machinery different from that involved in the rest of the proof of this theorem.

(iii) $\Rightarrow$ (ii): It is easy to design a nondeterministic automaton $M$ that reads an input stream~${\alpha\in\{-1,0,1\}^\omega}$ and which writes an output stream $\beta\in 2^\omega$ such that $\overline{\beta}=\sgnd{\alpha}$  along every infinite run. We can use $M$ to convert the output written by a deterministic transducer for the signed binary representation.

(ii) $\Rightarrow$ (i): We know that $f$ is continuous by Proposition~\ref{cont}. We need to show that $f$ is regular.

Informally, given $\alpha$ and $\beta$, we will imitate the run of $M$ on $\alpha$ and see if it agrees with $\beta$. If more agreement is observed then we allow our new automaton $A_0$ to visit an acceptance state once again. Of course, $M$ could be nondeterministic, but recall that $A_0$ can be nondeterministic as well. We will therefore be able to track all possible partial runs of $M$ and code them into $A_0$ so that no possibility is missing.  The only dangerous potential problem is related to the usual pathology of the binary representation, i.e., if we encounter a potentially infinite tail of 1s in $\beta$ but then discover very late that the actual $f(\alpha)$ is only very slightly larger than $\beta$. (In Proposition~\ref{reg->tr}, we will essentially show that this pathology is the only difficulty which makes determinization impossible for transducers in general.)

Formally, suppose that $f$ is computed by a nondeterministic transducer $M$ for the standard binary representation. Suppose the states of $M$ are $s_0,\ldots,s_n$ and the transition relation is $\Delta_M$.  If $M$ has delay~$D$, we will consider the function $2^{-D}f(x)$ instead, so we can assume without loss of generality that $M$~has no delay.

We define a (nondeterministic) B\"{u}chi automaton $A_0$ accepting the graph of $f$. The states of $A_0$ are of the form $(s,X)$ where $s$ is an $M$-state and $X$ is one of the following three options: $L$ (representing ``left''), $R$ (representing ``right''), or $E$ (representing ``exact''). These states are all accepting. We also include a single nonaccepting state $t$ for $A_0$, and we denote the input stream of $A_0$ as $\alpha\oplus \beta$.

Assume that the transitions in $\Delta_M$ are of the form $(s,a,b,s')$, where $s$ is the current $M$-state, $a$ is the next input bit scanned, $b$ is the  output bit written by $M$ after scanning $a$, and $s'$ is the new $M$-state. $A_0$ will scan two binary bits $ab$ at a time (the next bit $a$ of $\alpha$ and the the next bit $b$ of $\beta$). The initial states of $A_0$ are $(s,E)$ where $s$ is an initial state of $M$, and the transitions of $A_0$ are the following:
\begin{itemize}
\item $(t, ab,t)$ for any $ab\in\{00,01,10,11\}$.
\item $\left((s,L),ab,(s',L)\right)$ if $(s,a,0,s')\in \Delta_M$ and $b=1$.
\item $\left((s,L),ab,t\right)$ if $(s,a,1,s')\in \Delta_M$ or $b=0$.
\item $\left((s,R),ab,(s',R)\right)$ if $(s,a,1,s')\in \Delta_M$ and $b=0$.
\item $\left((s,R),ab,t\right)$ if $(s,a,0,s')\in \Delta_M$ or $b=1$.
\item $\left((s,E),ab,(s',E)\right)$ if $(s,a,b,s')\in \Delta_M$.
\item $\left((s,E),ab,(s',L)\right)$ if $(s,a,1,s')\in \Delta_M$ and $b=0$.
\item $\left((s,E),ab,(s',R)\right)$ if $(s,a,0,s')\in \Delta_M$ and $b=1$.
\end{itemize}
Now we want to check that $A_0$ accepts $\alpha\oplus\beta$ if and only if $\overline{\beta}=f(\overline{\alpha})$. Suppose that
$\overline{\beta}=f(\overline{\alpha})$. Then there must be some infinite run of $M$, $s_{i(0)},s_{i(1)},\ldots$ such that $s_{i(0)}$ is an initial $M$-state and for every $k$, $\left(s_{i(k)},\alpha(k),\beta'(k),s_{i(k+1)}\right)\in\Delta_M$ for some $\beta'$ such that $\overline{\beta'}=\overline{\beta}$. Then, as
\[(s_{i(0)},E), \alpha(0)\beta'(0),(s_{i(1)},E), \alpha(1)\beta'(1),\ldots\] is an infinite run of $A_0$, we conclude that $A_0$ accepts $\alpha\oplus \beta'$. Hence if $\beta=\beta'$ then we are done, so suppose that $\beta\neq\beta'$. In that case, there is some $k$ such that either $\beta=\beta\upharpoonright k*10^\omega$ and ${\beta'=\beta\upharpoonright k*01^\omega}$ or vice versa. Assume we have $\beta=\beta\upharpoonright k*10^\omega$. Then it is straightforward to verify that
\begin{multline*}
(s_{i(0)},E), \alpha(0)\beta(0),(s_{i(1)},E), \alpha(1)\beta(1),\allowbreak  \ldots,\\
(s_{i(k)},E), \alpha(k)1, (s_{i(k+1)},R), \alpha(k+1)0, (s_{i(k+2)},R), \alpha(k+2)0,\ldots
\end{multline*}
is an infinite run of $A_0$. Hence, $A_0$ accepts $\alpha\oplus \beta$.

Now suppose that $(s_{i(0)},X_{i(0)}), \alpha(0)\beta(0),(s_{i(1)},X_{i(1)}), \alpha(1)\beta(1),\ldots$ is an infinite accepting run of an input $\alpha\oplus\beta$. Note that no infinite accepting run can encounter the state $t$, so the accepting run must be of the form above. Suppose that $X_{i(k)}=E$ for all $k$. Then $\left(s_{i(k)},\alpha(k),\beta(k),s_{i(k+1)}\right)\in\Delta_M$ for all $k$, and so $\overline{\beta}=f\left(\overline{\alpha}\right)$. Otherwise, there is some $j$ such that $X_{i(0)}=\ldots=X_{i(j)}=E$ and ${X_{i(j+1)}=X_{i(j+2)}=\ldots=L}$ or $X_{i(j+1)}=X_{i(j+2)}=\ldots=R$. Assume, without loss of generality, that $X_{i(j+1)}=L$. Then $\left(s_{i(k)},\alpha(k),\beta(k),s_{i(k+1)}\right)\in\Delta_M$ for all $k<j$. We have $(s_{i(j)},\alpha(j),1,s_{i(j+1)})\in\Delta_M$ and $\beta(j)=0$. We must also have $\left(s_{i(k)},\alpha(k),0,s_{i(k+1)}\right)\in\Delta_M$ and $\beta(k)=1$ for all $k>j$. Hence
there is an infinite run of $M$ which writes the output $(\beta\upharpoonright j)*10^\omega$ on input $\alpha$, while $\beta=(\beta\upharpoonright j)*01^\omega$. Hence, $\overline{\beta}=\overline{(\beta\upharpoonright j)*10^\omega}=f\left(\overline{\alpha}\right)$.\end{proof}
		
\subsection{The technical propositions}	

Proposition \ref{reg->tr}  and its proof  will be
used heavily in the proof of the main technical  Proposition \ref{lemma:kukudeterminsticnondeterministic}. 	
Even though this proposition gives us (i) $\Rightarrow$ (ii) in Theorem \ref{thm:kuku1}, we do not need it to establish this implication of the theorem since we have (iii) $\Rightarrow$ (ii) and (ii) $\Rightarrow$ (i) already, and Proposition \ref{lemma:kukudeterminsticnondeterministic} will give us (i) $\Rightarrow$ (iii). It may be instructive to compare it with Lemma~\ref{lem:deterministic}.

\begin{proposition}\label{reg->tr}
Each continuous and regular $f\colon  [0,1] \rightarrow [0,1]$ can be computed by a nondeterministic transducer for the standard binary representation.
\end{proposition}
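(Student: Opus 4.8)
The goal is to show that a continuous regular $f\colon [0,1]\to[0,1]$ can be simulated by a nondeterministic transducer reading and writing the standard binary representation. I would start from a \emph{deterministic} B\"uchi automaton $A$ accepting $\mathrm{Graph}(f)$, which is available by Lemma~\ref{lem:deterministic}, and I would reuse the machinery developed in the proof of Lemma~\ref{lemlip}: the pruned configuration trees $T_\sigma$, the state-labelling function $\ell$, and especially Claim~\ref{claim:kukumain}, which guarantees a constant $D$ (depending only on the number of states $N$) such that after scanning $D$ further input bits one can always locate $2N+1$ nested extendible nodes all carrying the \emph{same} accepting state. The transducer I build will have delay $D$ (or a small multiple of it): after reading a block of input bits $\alpha\res[0,D)$ it commits to outputting the first output bit, then proceeds blockwise.

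\textbf{Key steps.} First I would set up the bookkeeping: as the transducer reads more and more of $\alpha\in 2^\omega$, it maintains the set of states of the ``living'' runs of $A$ on the pruned tree $T_{\alpha\res m}$, i.e.\ it tracks $\{\ell(\alpha\res m,\tau): \tau\in T_{\alpha\res m},\ |\tau|=m\}$, together with which of these have already seen an accepting state. This is a finite amount of information (a subset of $\mathcal P(Q)$ type data), so it fits in finitely many transducer states. Second, the nondeterminism is used exactly as in the $(ii)\Rightarrow(i)$ argument, but in reverse: at each output step the transducer guesses the next output bit $b$, and keeps only those runs of $A$ consistent with having produced $\beta\res k \!*\! b$ so far; if every surviving run dies, that branch of the transducer dies. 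The delay $D$ is what makes this commitment safe: by Claim~\ref{claim:kukumain} and the argument in Claim~\ref{claim:kuku1} (the Lipschitz estimate), once we have read $i+C$ input bits we know $f(\overline{\alpha})$ to within $2^{-i}$, so the output bit $i$ (modulo the usual $1\bar 0^\omega$ versus $0\bar 1^\omega$ ambiguity) is determined and some surviving run will realise it. Third, I would handle the binary-representation pathology the same way the main theorem's $(ii)\Rightarrow(i)$ proof does: maintain, in addition to the ``exact'' branch, branches that have committed to $\beta = \beta\res k\!*\!10^\omega$ or $\beta = \beta\res k\!*\!01^\omega$; one of these is always correct when the exact branch would have to decide prematurely, and these branches are legitimate infinite runs producing a legitimate binary name for the same real. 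Finally I would verify both directions: every infinite run of the transducer on $\alpha$ outputs a binary name of $f(\overline\alpha)$ (because it is shadowing surviving runs of $A$ whose limit output, by continuity, must be $f(\overline\alpha)$ — essentially the convergence argument $x_k\to x$, $y_k\to y$ from Lemma~\ref{lem:deterministic}); and conversely, for any $\alpha$ there \emph{is} an infinite run, because Claim~\ref{claim:kukumain} guarantees the living-runs set never empties out and always contains a run that keeps hitting accepting states, so the transducer never gets stuck with all branches dead.

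\textbf{Main obstacle.} The delicate point is not the nondeterministic simulation itself — that is routine given the earlier lemmas — but rather proving that the transducer \emph{has} an infinite run for \emph{every} input $\alpha$, including irrational $\alpha$, and that \emph{every} such run is correct. Getting a run that is infinite requires knowing that the pruned-tree living-states set stabilises nicely and periodically revisits accepting states; this is exactly the content extracted from Claim~\ref{claim:kukumain}, but one has to patch together the blockwise guarantees (each valid over a window of length $D$) into a single coherent infinite run, which is where the totality of $f$ and a compactness/K\"onig's-lemma argument enter. Conversely, ruling out a run that is infinite but outputs the wrong real reduces, via the telescoping $x_k$, $y_k$ construction and continuity of $f$, to the same contradiction used in Claim~\ref{claim:kuku1}. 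So the bulk of the real work is repackaging the Lipschitz-proof machinery; I would expect the write-up to lean on Claim~\ref{claim:kukumain} as a black box and spend most of its length on the verification that the constructed transducer is total and sound.
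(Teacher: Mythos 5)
Your proposal matches the paper's proof in all essential respects: it starts from the deterministic B\"uchi automaton of Lemma~\ref{lem:deterministic}, reuses the pruned trees $T_\sigma$, the labelling $\ell$, and Claim~\ref{claim:kukumain} to get a fixed delay $D$, tracks the set of living states as finite transducer memory, uses nondeterminism precisely for the $0\bar{1}^\omega$ versus $1\bar{0}^\omega$ ambiguity by maintaining a ``left'' and a ``right'' candidate output, and verifies totality and soundness via the Lipschitz-type window bound of Claim~\ref{claim:kuku1}. The paper's write-up simply makes the bookkeeping concrete (the state is a triple $(X,\gamma,S)$ and the nondeterminism is organized so that exactly two twin runs are alive at a time), but the architecture and the choice of key lemmas are exactly as you describe.
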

\begin{proof}[Proof of Proposition~\ref{reg->tr}] We will describe a nondeterministic transducer computing $f$. We require that for each input $x$ and each infinite run corresponding to $x$, the output from this run is a binary representation for $f(x)$.

The \emph{informal idea} of this proof is as follows. By Lemma~\ref{lem:deterministic}, we can assume the automaton witnessing regularity is deterministic.
 By  Lemma \ref{lemlip}, $f$ has to be Lipschitz. Thus,
the naive hope is that it should be possible to decide the next bit of $f(x)$, say the $i^{\text{th}}$ bit, by looking ahead at the next $n$~bits of~$x$, where $n$ does not depend on $i$.   We scan all possible runs of the automaton on strings of length~$Cn$, where~$C>1$ will depend on the size of the automaton in the spirit of the proof of Lemma \ref{lemlip},  and see which one hits an accepting state twice. This should allow us to decide what $f(x)$ is with good precision. Of course, the problem is that after several iterations of this strategy, we may see that both $0.011111$ and $0.100000$ could serve as $f(x)$ because we have not yet scanned $x$ far enough.
Then, at a very late stage we may discover that the true output has the form $0.1000\ldots01$. Thus, if we decide to stick with $0.01111\ldots1$, we will be in trouble. Similarly, if we stick with $0.1000\ldots0$ and the actual output is
$0.01111\ldots10$, we will not be able to go back. Thus, we will have to keep both possibilities open, and this is why the resulting transducer  will be nondeterministic at the end.

To make these ideas formal,
we will apply the proof for Lemma \ref{lemlip} here, using the ``memoryless" property of an automaton. We will need to rephrase the argument used in Lemma \ref{lemlip} appropriately; the notation we use here is discussed in the proof of that lemma.

For each $d$, consider the tree with height $d+1$, $N$ many branches at the root, and binary branching thereafter. It is easy to see that given any $n$, $d$, and $\sigma_0,\sigma_1$ of lengths $n$ and $d$ respectively, there is a subtree $\hat{T}_{\sigma_1}$ of the abovementioned tree such that each string $k*\tau$ on~$\hat{T}_{\sigma_1}$ corresponds to a string $\eta$ on~$T_{\sigma_0*\sigma_1}$ such that $\ell\left(\sigma_0,\eta\upharpoonright n\right)=k$ and $\ell\left(\sigma_0*\sigma_1,\eta\right)=\ell\left(\sigma_1,k*\tau\right)$. (Here, we assume that $\{1,2,\ldots,N\}$ are the states of $A$ and that $\ell\left(\sigma_1,k*\tau\right)$ for $\hat{T}_{\sigma_1}$ is defined in the obvious way, with $k$ representing the starting state used to scan $\sigma_1*\tau$.) This correspondence is one-to-one. Therefore, if we are only concerned with the strings of $T_{\sigma_0*\sigma_1}$ of length greater than $n$, we will only need the parameters $d$, $\sigma_1$ and the finite set $X=\{\ell(\sigma_0,\eta)\mid\eta\in T_{\sigma_0}$ has length $n_0\}$; the string $\sigma_0$ is irrelevant, as the strings extending the leaves of $T_{\sigma_0}$ depend only on $X$ and $\sigma_1$.

With this setup, Claim \ref{claim:kukumain} may now be rephrased as follows:

\begin{quote}
Given any string $\hat{\sigma}$ of length $D$ and any subset $X$ of $\{1,2,\ldots,N\}$ of ``reachable" states, there are strings $\hat{\eta}_0\subset\hat{\eta}_1\subset\ldots\subset \hat{\eta}_{2N}$ such that $3<|\hat{\eta}_0|<\ldots<|\hat{\eta}_{2N}|<D$ and some $k\in X$ such that for all $j,j'\leq 2N$, $k*\hat{\eta}_j\in\hat{T}_{\hat{\sigma}}$ and $\ell\left(\hat{\sigma},k*\hat{\eta}_j\right)=\ell\left(\hat{\sigma},k*\hat{\eta}_{j'}\right)$ is accepting.
\end{quote}

Now, given the parameters $\hat{\sigma}$ and $X$, let $\nu(\hat{\sigma},X)$ encode the first two digits of $\hat{\eta}_0$ as well as all the states on the second level of the tree $\hat{T}_{\hat{\sigma}}$. The function $\nu$ contains only finitely much information and may therefore be used as memory in the transducer $\hat{A}$ we are about to define.

The transducer $\hat{A}$ works as follows. The states of $\hat{A}$ encode a set $X\subseteq\{1,\ldots,N\}$, a binary string $\gamma$ of length $2$, and a symbol $S$ from the set $\{L,R\}$. The meanings of the parameters are as follows. $X$ stands for the set of states on the leaves of the tree $T_{\sigma}$, where $\sigma$ is the input read so far by $\hat{A}$ ($\hat{A}$ of course does not remember all of $\sigma$, only the last $D$ bits it has most recently read). $\gamma$ stands for the first two bits of $\eta_0$ produced at the previous step, and the symbols $L$ and $R$ mean that the state is recording the left path of a potential output $\beta*0*111\ldots$ or the right path $\beta*1*000\ldots$, respectively. The nondeterminism of $\hat{A}$ is used here to allow us to keep both binary representations of a dyadic rational alive as potential outputs.

The initial state of $\hat{A}$ is the state encoding $X=\{1\},\gamma=00$, and $S=R$, where $1$ is the initial state of $A$.
From this initial state, it scans the first $D+1$ bits of the input $\hat{\sigma}$. Let $X'$ and $\gamma'$ be the output of $\nu(\hat{\sigma},\{1\})$. Without loss of generality, we assume that $Rng(f)\subseteq \left(\frac{1}{4},\frac{1}{2}\right)$ and that $\gamma'=01$ (we will later verify that this choice of $\gamma'$ works). 
We allow the initial state $\left(\{1\},00,R\right)$ to transit (nondeterministically) to $\left(X',01,L\right)$ and write the output $0$ and to $\left(X',01,R\right)$ and write the output $1$.

Now we describe the transitions from a noninitial state. Consider the state $\left(X,\gamma,S\right)$ and assume that $A$ has just read the next bit of the input and that $\hat{\sigma}$ is the last $D$ bits of the input read by $A$. Let $X'$ and $\gamma'$ be the output of $\nu(\hat{\sigma},X)$. Our transitions are listed in the table below. The second, third, and fourth columns represent the values of $S,\gamma$ and $\gamma'$ respectively. The last column represents the state (or states) that we transit to and the corresponding output bit to be written. For instance, the third row represents the situation where we have $S=L$, $\gamma$ is either $01$ or $11$, and $\gamma'$ is $01$ or $10$. In this case, we allow nondeterministic transitions to the state $\left(X',\gamma',L\right)$ where we write output bit $0$ as well as to $\left(X',\gamma',R\right)$ where we write output bit $1$.


\begin{center}
\begin{tabular}{|c|c|l|l|l|}
\hline
{\hspace*{-0.17cm}\small Row} & $S$ & \multicolumn{1}{c|}{$\gamma$} & \multicolumn{1}{c|}{$\gamma'$} & New state(s) and output bit(s) \\\hline
1 & $L$ &  $00,01,10,11$ & $00,11$ & $\left(X',\gamma',L;1\right)$ \\
2 & $L$ &  $00,10$ & $01,10$ & run dies \\
3 & $L$ &  $01,11$ & $01,10$ & $\left(X',\gamma',L;0\right),\left(X',\gamma',R;1\right)$ \\\hline
4 & $R$ &  $00,01,10,11$ & $00,11$ & $\left(X',\gamma',R;0\right)$ \\
5 & $R$ &  $00,10$ & $01,10$ & $\left(X',\gamma',L;0\right),\left(X',\gamma',R;1\right)$ \\
6 & $R$ &  $01,11$ & $01,10$ & run dies \\\hline
\end{tabular}
\end{center}
Given any finite run $\left(X_0,\gamma_0,S_0\right),\ldots,\left(X_{n-1},\gamma_{n-1},S_{n-1}\right)$ of length $n\geq 2$, there is a maximal $j<n-1$ such that the transition from $\left(X_j,\gamma_j,S_j\right)$ to $\left(X_{j+1},\gamma_{j+1},S_{j+1}\right)$ is nondeterministic (corresponding to row~$3$ or~$5$). In this case, we say that the run is \emph{$j$-splitting}. Clearly, we must have $S_{j+1}=\ldots=S_{n-1}$, and all transitions
from $\left(X_{i},\gamma_i,S_i\right)$ to $\left(X_{i+1},\gamma_{i+1},S_{i+1}\right)$ for $j<i<n-1$ are of type $1$ or $4$. It is clear that both
\[
\begin{aligned}
\left(X_0,\gamma_0,S_0\right),\ldots,\left(X_{j},\gamma_j,S_j\right),\left(X_{j+1},\gamma_{j+1},L\right),\left(X_{j+2},\gamma_{j+2},L\right),\ldots,
\left(X_{n-1},\gamma_{n-1},L\right)\rlap{\text{ and}}\\
\left(X_0,\gamma_0,S_0\right),\ldots,\left(X_{j},\gamma_j,S_j\right),\left(X_{j+1},\gamma_{j+1},R\right),\left(X_{j+2},\gamma_{j+2},R\right),\ldots,
\left(X_{n-1},\gamma_{n-1},R\right)
\end{aligned}
\]
are runs of length $n$. We call pairs of runs such as these \emph{twin runs}. Furthermore, note that if $j<n-2$, then $\gamma_{i}\in\{00,11\}$ for all $j+2\leq i\leq n-1$.

We now prove that $\hat{A}$ computes the function $f$. For the rest of this proof, fix a real number $x$ as input, a binary representation $\alpha$ of $x$, and a binary representation $\beta$ of $f(x)$. We first show that there is an infinite run corresponding to the input $\alpha$. By compactness, it suffices to show that for every $n\geq 2$, there is a run of $\hat{A}$ of length $n$ beginning with the initial state $\left(\{1\},00,R\right)$ and where the first $n+D-1$ many bits of $\alpha$ is read during the run (recall that the length of a run is always one more than the number of output bits written by the run, and that $\hat{A}$ has a delay of $D$). 
 Suppose that $\left(X_0,\gamma_0,S_0\right),\ldots,\left(X_{n-1},\gamma_{n-1},S_{n-1}\right)$ is a run of length $n$. Every combination of $S_{n-1},\gamma_{n-1},\nu(\alpha,X_{n-1})$ listed in the table will allow the current run to be extended except for rows $2$ and $6$. However, in those two cases, the corresponding twin run will be extendible, and therefore there must be some run of length $n+1$.

	
Next we have to show that any infinite run produced by $\hat{A}$ while scanning input $\alpha$ must write an output representing $f(x)$. To do this, we prove the following claim:

\begin{claim}\label{claim:kukutransducer}
Suppose that  $\left(X_0,\gamma_0,S_0\right),\ldots,\left(X_{n-1},\gamma_{n-1},S_{n-1}\right)$ is a run of length $n\geq 2$ produced by $\hat{A}$ while scanning $\alpha$, where $X_0=\{1\},\gamma_0=00$ and $S_0=R$, and $\tau$ is the output written by this run. Suppose this run is $j$-splitting for some $j<n-1$. Then
\begin{itemize}
\item[(i)] If $S_{n-1}=L$ then $\tau=(\tau\upharpoonright j)*01^{n-j-2}$, and if $S_{n-1}=R$ then $\tau=(\tau\upharpoonright j)*10^{n-j-2}$.

\item[(ii)] $X_{n-1}$ is the set $\{\ell(\alpha,\rho)\mid \rho\in T_{\alpha\upharpoonright (n-1)}$ and $|\rho|=n-1\}$.

\item[(iii)] If $\gamma_{n-1}=00$ or $10$ then
\[\overline{(\tau\upharpoonright j)*10^{n-j-1}}-2^{-n-2}\leq \overline{\beta\upharpoonright (n+2)}\leq \overline{(\tau\upharpoonright j)*10^{n-j-1}}+2^{-n},\] and if $\gamma_{n-1}=01$ or $11$ then
\[\overline{(\tau\upharpoonright j)*01^{n-j-1}}-2^{-n-2}\leq \overline{\beta\upharpoonright (n+2)}\leq \overline{(\tau\upharpoonright j)*01^{n-j-1}}+2^{-n}.\]
\end{itemize}
\end{claim}
\begin{proof}(i) and (ii) can be verified easily by definition chasing. We prove (iii) by induction over $n$. The basis is $n=2$, where we have the run $(\{1\},00,R),$ $(X_1,01,S_1)$, which is of course $0$-splitting. We have to verify that $\overline{01}-2^{-4}\leq \overline{\beta\upharpoonright 4}\leq \overline{01}+2^{-2}$, which is the same as $\frac{3}{16}\leq\beta\upharpoonright 4\leq \frac{1}{2}$, which is true since we assumed that $f(x)\in\left(\frac{1}{4},\frac{1}{2}\right)$.

Now suppose that (iii) holds for $n\geq 2$. We fix a run $\left(X_0,\gamma_0,S_0\right),\ldots,\left(X_{n},\gamma_{n},S_{n}\right)$ of length $n+1$. First assume that $\gamma_{n}\in \{00, 10\}$ and that the run is $j$-splitting for some $j<n-1$. This means that the run $\left(X_0,\gamma_0,S_0\right),\ldots,\left(X_{n-1},\gamma_{n-1},S_{n-1}\right)$ is also $j$-splitting. Since $\gamma_n$ is either $00$ or $10$ and $j<n-1$, by examining the table, we see that $\gamma_n$ must in fact be $00$, and the very last transition $\left(X_{n-1},\gamma_{n-1},S_{n-1}\right)$ to $\left(X_{n},\gamma_{n},S_{n}\right)$ corresponds to row $1$ or $4$. This means that $\gamma_n=00$ is the output of $\nu(\hat{\sigma},X_{n-1})$, and by~(ii), there is some string $\tilde{\gamma}$ of length $n-1$ such that $\tilde{\gamma}*\gamma_n$ extends to a chain of $2N+1$ many nodes with accepting states on $T_{\alpha\upharpoonright (n-1+D)}$. Now apply the inductive hypothesis on $\gamma_{n-1}$; there are two cases. First, suppose that $\gamma_{n-1}\in\{00,10\}$, then, we have $\overline{(\tau\upharpoonright j)*10^{n-j-1}}-2^{-n-2}\leq \overline{\beta\upharpoonright (n+2)}\leq \overline{(\tau\upharpoonright j)*10^{n-j-1}+2^{-n}}$. Applying Claim \ref{claim:kuku1} to $\tilde{\gamma}*\gamma_n$ limits the possible values of $\overline{\beta\upharpoonright (n+2)}$ relative to $\overline{\tilde{\gamma}*\gamma_n}$, and using the abovementioned bounds on $\beta\upharpoonright (n+2)$, we conclude that $\tilde{\gamma}=(\tau\upharpoonright j)*10^{n-j-2}$. If $\gamma_{n-1}\in\{01,11\}$, then we can also conclude by a similar argument that $\tilde{\gamma}=(\tau\upharpoonright j)*10^{n-j-2}$. Since $\gamma_n=00$, applying Claim \ref{claim:kuku1} to $\tilde{\gamma}*\gamma_n$ gives us the bounds $\overline{(\tau\upharpoonright j)*10^{n-j}}-2^{-n-3}\leq \overline{\beta\upharpoonright (n+3)}\leq \overline{(\tau\upharpoonright j)*10^{n-j}}+2^{-n-1}$, which is what we need. If $\gamma_n\in\{01,11\}$, then a symmetric argument holds, thus, the inductive step can be verified in the case where $j<n-1$.

Now we assume that $\gamma_{n}\in\{00,10\}$ and that the run is $(n-1)$-splitting. As above, fix $\tilde{\gamma}$ of length~${n-1}$ such that $\tilde{\gamma}*\gamma_n$ extends to a chain of $2N+1$ many nodes with accepting states on $T_{\alpha\upharpoonright (n-1+D)}$. Fix~${j'<n-1}$ such that $\left(X_0,\gamma_0,S_0\right),\ldots,\left(X_{n-1},\gamma_{n-1},S_{n-1}\right)$ is $j'$-splitting. Applying the inductive hypothesis to $\gamma_{n-1}$, again there are two cases, depending on the value of $\gamma_{n-1}$. First suppose that $\gamma_{n-1}$ is either $00$ or $10$. Then since the last transition is a nondeterministic transition, this means that $\gamma_{n}$ must be $10$ and~$S_{n-1}=R$. By a similar argument as above, we can conclude that $\tilde{\gamma}=(\tau\upharpoonright j')*10^{n-j'-2}$. Since $S_{n-1}=R$, $\tau$ must extend $(\tau\upharpoonright j')*10^{n-j'-2}$ (by (i)). Together with the fact that $\gamma_n=10$, applying Claim~\ref{claim:kuku1} to~$\tilde{\gamma}*\gamma_n$ gives us the bounds $\overline{(\tau\upharpoonright n-1)*10}-2^{-n-3}\leq \overline{\beta\upharpoonright (n+3)}\leq \overline{(\tau\upharpoonright n-1)*10}+2^{-n-1}$, which is what we need. The other cases are symmetric. This concludes the induction, and hence (iii) holds.
\end{proof}
Now consider an infinite run produced by $\hat{A}$ while scanning input $\alpha$ and let $\tilde{\beta}$ be the output of this run. If there are infinitely many initial segments of this run which split at different positions, then  we claim that $\beta\supset(\tilde{\beta}\upharpoonright j)$ at each of these splitting positions, and therefore $\beta=\tilde{\beta}$. To see this, fix a splitting position $j$ along the infinite run. If there exists a least number $j'<j$ such that $\beta(j')\neq \tilde{\beta}(j')$, consider the following two cases. First if $\beta(j')=0$ then $\overline{\beta\upharpoonright j}\leq \overline{(\beta\upharpoonright j')*10^\omega}$, and since $\tilde{\beta}(j')=1$ we have $\overline{\beta\upharpoonright j}\leq \overline{(\tilde{\beta}\upharpoonright j)*0^\omega}$. On the other hand, if $\beta(j')=1$ then $\overline{\beta\upharpoonright j}\geq \overline{(\beta\upharpoonright j')*01^\omega}$, and since $\tilde{\beta}(j')=0$ we have $\overline{\beta\upharpoonright j}\geq \overline{(\tilde{\beta}\upharpoonright j)*1^\omega}$. Now applying Claim \ref{claim:kukutransducer}(iii) on a suitable $n>j$ tells us that either $\overline{(\tilde{\beta}\upharpoonright j)*10^{n-j-1}}-2^{-n-2}\leq \overline{\beta\upharpoonright (n+2)}\leq \overline{(\tilde{\beta}\upharpoonright j)*10^{n-j-1}}+2^{-n}$ or $\overline{(\tilde{\beta}\upharpoonright j)*01^{n-j-1}}-2^{-n-2}\leq \overline{\beta\upharpoonright (n+2)}\leq \overline{(\tilde{\beta}\upharpoonright j)*01^{n-j-1}}+2^{-n}$ holds. However note that this is a contradiction since the left side of both sets of inequalities is strictly greater than $\overline{(\tilde{\beta}\upharpoonright j)*0^\omega}$, the the right side of both sets of inequalities is strictly smaller than $ \overline{(\tilde{\beta}\upharpoonright j)*1^\omega}$. Therefore we conclude that $\beta=\tilde{\beta}$.

Now suppose there are only finitely many splitting positions, and let $j$ be the final splitting position along the infinite run. By Claim \ref{claim:kukutransducer}(i), $\tilde{\beta}=(\tilde{\beta}\upharpoonright j)*01^\omega$ or $\tilde{\beta}=(\tilde{\beta}\upharpoonright j)*10^\omega$, and by Claim \ref{claim:kukutransducer}(iii) (as $n\rightarrow\infty$), we conclude that $\beta$ and $\tilde{\beta}$ represent the same real (in fact, rational) number.
%
%

This concludes the proof of Proposition  \ref{reg->tr}.
\end{proof}

\begin{proposition}\label{lemma:kukudeterminsticnondeterministic}
Let $f\in C[0,1]$ and is regular. Then $f$ can be computed by a deterministic transducer for the signed binary representation.
\end{proposition}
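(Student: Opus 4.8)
The plan is to build the desired deterministic signed‑binary transducer $\Shat$ by recycling the machinery developed for Lemma~\ref{lemlip} and Proposition~\ref{reg->tr}, and then to bridge the gap between a signed‑binary input and the standard‑binary Büchi automaton by a bounded amount of bookkeeping that is justified by Lipschitz continuity. First I would fix a deterministic Büchi automaton $A$ with $N$ states accepting $Graph(f)$ (Lemma~\ref{lem:deterministic}), together with the constant $C=D+1$ and the bound from Claim~\ref{claim:kukumain}. Recall that Claim~\ref{claim:kuku1} then says that from a standard‑binary string $\sigma$ of length $i+C$ one can read off a dyadic $q$ with $|f(\overline\sigma)-q|\le 2^{-i-3}$, and by Lemma~\ref{lemlip} this same $q$ approximates $f(x)$ to within $2^{-i-2}$ for every $x$ with $|x-\overline\sigma|<2^{-i-C}$. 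Exactly as in Proposition~\ref{reg->tr}, the estimate $q$ (indeed a couple of its bits at a time) is produced from bounded data — the last $D$ input bits and the reachable‑state set $X\subseteq\{1,\dots,N\}$ — via the function $\nu$; this is the precedent showing that a finite‑state machine can keep refining an estimate of $f(x)$.

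The transducer $\Shat$ will carry in its finite state: (1) the last $O(1)$ signed‑binary input digits; (2) the finitely many standard‑binary strings of the current length whose value lies within a fixed multiple of $2^{-(\text{length})}$ of the real $x=\sgnd\alpha$ named by the input read so far, together with the corresponding sets of $A$‑states those prefixes can reach (updated incrementally, exactly as the set $X$ is updated in Proposition~\ref{reg->tr}); and (3) a bounded ``offset'' $\gamma$ recording, at the current scale, where the running estimate of $f(x)$ sits relative to the value $v_k$ of the signed‑binary output digits already written. The delay $D'$ is a fixed constant, namely $C$ plus the few extra bits of look‑ahead needed to pin down $\gamma$. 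I would maintain the invariant that after reading $k+D'$ input digits, $\Shat$ has written $k$ output digits of value $v_k$ with $|v_k-f(x)|\le 2^{-k-2}$.

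The step from $k$ to $k+1$ goes as follows: read one more input digit, update (2) and (3), and use Claim~\ref{claim:kuku1} together with Lemma~\ref{lemlip} applied to (one of) the tracked standard‑binary prefixes to obtain a dyadic $q'$ with $|q'-f(x)|\le 2^{-k-c}$ for a fixed $c$; then write the signed digit $b_{k+1}\in\{-1,0,1\}$ — chosen greedily, with ties broken by a fixed rule — that makes $v_{k+1}=v_k+b_{k+1}2^{-k-1}$ closest to $q'$. Two facts make this work. Redundancy of signed binary: the reals represented by extensions of a length‑$k$ signed‑binary prefix of value $v_k$ form exactly $[v_k-2^{-k},v_k+2^{-k}]$, which by the invariant contains the whole interval $[q'-2^{-k-c},q'+2^{-k-c}]$, so we never get stuck; and among $v_k,v_k-2^{-k-1},v_k+2^{-k-1}$ the one closest to $f(x)$ lies within $2^{-k-2}$ of it, restoring the invariant. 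Since the intervals around $q'$ shrink to $\{f(x)\}$, we get $v_k\to f(x)$, i.e.\ the output $\beta$ satisfies $\sgnd\beta=f(x)$; and since $\Shat$ only ever uses $\sgnd{\alpha\upharpoonright m}$, which is within $2^{-m}$ of $x$ regardless of which signed‑binary name $\alpha$ of $x$ was supplied, $\Shat$ computes $f$ on every representation. Determinism and the finite‑state/fixed‑delay conditions are immediate from the description.

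The hard part is item (2) and the ``carry'' phenomenon. Because $A$'s transitions are labelled by $\{0,1\}$ while the input of $\Shat$ is over $\{-1,0,1\}$, $\Shat$ cannot simply feed its input to $A$, and signed‑to‑standard binary conversion is, in general, not transducer‑computable. The remedy is to track the boundedly many standard‑binary candidate prefixes of $x$ together with the $A$‑state sets they reach, all stored relative to $\sgnd{\alpha\upharpoonright k}$ so that only bounded offsets are kept; one checks that this family is closed under reading one more signed‑binary digit. The delicate point is that the ``committed'' standard‑binary prefix of $x$ can jump far back when a carry propagates, changing the candidate used for the estimate; but the old and new candidates represent dyadics within $2^{-k+O(1)}$ of each other, so by Lemma~\ref{lemlip} the estimate of $f(x)$ moves by at most $2^{C}2^{-k+O(1)}$, which the signed‑binary redundancy of the output absorbs without breaking the invariant (after perhaps inflating $D'$ and $c$ by fixed amounts). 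Carrying out this bookkeeping precisely, and checking the boundary behaviour near $0$ and $1$ where prefixes may momentarily leave $[0,1]$ and must be clamped, is where essentially all of the technical work lies, which is presumably why this direction is flagged as highly technical.
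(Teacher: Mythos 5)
Your approach is genuinely different from the paper's. The paper does not go directly from the B\"{u}chi automaton $A$: it first takes the nondeterministic standard‑binary transducer $\hat{A}$ built in Proposition~\ref{reg->tr}, exploits the fact that $\hat{A}$ always has exactly two live runs producing $\outl(\Dy(\eta))$ and $\outr(\Dy(\eta))$, and builds $A_d$ to simulate $\hat{A}$ on both $\Dy(\eta)$ and $\Dy(\eta*(-1))$ simultaneously, with a ternary offset $\iota\in\{-1,0,1\}$ relating their outputs (Claim~\ref{claim:kukurelativeposition}). The crucial structural ingredient that you have no analogue of is Claim~\ref{claim:kukunondeterministictwice}: three consecutive ``kill‑left'' (or ``kill‑right'') transitions of $\hat{A}$ are impossible, which together with Claim~\ref{claim:kukucollapse} bounds the catch‑up $\gamma$ after a carry to at most three signed output digits. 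Your substitute is a Lipschitz estimate on how far the $f$‑value can move across a carry, which is a reasonable and arguably cleaner idea; the price is that you then need to track the $A$‑state sets of \emph{several} candidate standard‑binary prefixes rather than just the $\hat{A}$‑state for $\Dy(\eta)$ and $\Dy(\eta*(-1))$, and you need to check that the candidate family is closed under one signed digit and that the delay can be inflated to absorb the $2^C$ Lipschitz blow‑up.

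There is, however, a concrete quantitative gap in your invariant. You claim $|v_k - f(x)|\le 2^{-k-2}$ is maintained and ``restored'' by choosing the closest of $v_k,\,v_k\pm 2^{-k-1}$. But from $|v_k-f(x)|\le 2^{-k-2}$ the closest of those three is only guaranteed to lie within $2^{-k-2}$ of $f(x)$, not within $2^{-(k+1)-2}=2^{-k-3}$, so your own argument shows the invariant does not contract. Moreover you are choosing the digit based on the estimate $q'$, not on $f(x)$ itself, so an extra $2^{-k-c}$ error enters at every step. The correct self‑sustaining form is something like $|v_k-f(x)|\le\bigl(\tfrac12+2^{-c+1}\bigr)2^{-k}$, valid for $c\ge 3$, which is a fixed point of the one‑step recursion $|v_{k+1}-f(x)|\le 2^{-k-2}+2^{-k-c}$. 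This is fixable, but the version in your write‑up is false, and the constants in turn feed into how many candidate prefixes and how much delay you need, so the bookkeeping in item (2) has to be redone with the corrected constants before the argument closes.

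Separately, be careful about what is stored: you cannot keep ``standard‑binary strings of the current length'' in the state, nor $\sgnd{\alpha\!\upharpoonright\! k}$ as a reference point, as both are unbounded; what can be stored is the last $D$ (or $D+O(1)$) standard‑binary bits of \emph{each} candidate, their $A$‑state sets, and the bounded offset. That is exactly the shape of the paper's $A_d$‑state $\bigl(\delta,\iota,\langle\Shat^+_L,\Shat^+_R\rangle,\langle\Shat^-_L,\Shat^-_R\rangle\bigr)$, except the paper needs only two candidates because $\hat{A}$'s two‑run structure already carries the necessary information.
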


\begin{proof}
Let $f\in C[0,1]$ be regular. By Proposition \ref{reg->tr} we fix the nondeterministic transducer $\hat{A}$ for the standard binary representation computing $f$.  Our goal is to produce a deterministic transducer $A_d$ for the signed binary representation which computes $f$.

The \emph{informal idea} behind the proof below is based on the following intuition. Recall that the trouble in the proof of Proposition \ref{reg->tr} was that we could not quickly decide between the outputs with long tails of $1$s vs.~the outputs with long tails  of $0$s. The intuition is that the signed binary representation will allow us correct the errors online: if we have to go back from, say, $0.100\ldots00$ to
$0.111\ldots10$ we can instantly correct the error by outputting $0.100\ldots00(-1)$. Thus, if we have to decide between two such potential outputs we stick with $0.100\ldots00$.
To make sure we are dealing with a transducer which does not have any other issues unrelated to the pathology described above, we will be using the same automaton $\hat{A}$ as in the proof of Proposition \ref{reg->tr}. Of course, the input will also have to be signed binary. We will discuss this and further tensions in detail later.

We will make use of the properties of $\hat{A}$ proved in Proposition~\ref{reg->tr}; as in  Proposition~\ref{reg->tr} we assume that $Rng(f)\subseteq\left(\frac{1}{4},\frac{1}{2}\right)$.
Since we have to convert between the standard and signed binary representations of $[0,1]$, the following definition will be convenient: Given any $\eta\in\{-1,0,1\}^{<\omega}$, let $\Dy(\eta)$ be the unique $\sigma\in 2^{|\eta|}$ such that $\overline{\sigma}=\max\left\{\sgnd{\eta},0\right\}$. Note that this cannot be done in an online way.

\subsubsection*{Recalling $\hat{A}$} Since the definition of $A_d$ will rely heavily on $\hat{A}$, we will first recall some properties of $\hat{A}$ and prove several additional facts about $\hat{A}$ which will be used in the definition of $A_d$.

Recall that $\hat{A}$ has a delay of $D$ (for some constant $D$), i.e., for each $n$, $\hat{A}$ produces runs of length $n+1$ and writes an output of length $n$ after scanning $D+n$ many bits of the input. In fact, by Claim \ref{claim:kukutransducer}, for every $\sigma\in 2^{<\omega}$, there are exactly two runs
\[\begin{aligned}\left(X_0,\gamma_0,S_0\right),\ldots,\left(X_{j},\gamma_j,S_j\right), \left(X_{j+1},\gamma_{j+1},L\right),\left(X_{j+2},\gamma_{j+2},L\right),\ldots,
\left(X_{n},\gamma_{n},L\right)\rlap{\text{ and}}\\
\left(X_0,\gamma_0,S_0\right),\ldots,\left(X_{j},\gamma_j,S_j\right),\left(X_{j+1},\gamma_{j+1},R\right),\left(X_{j+2},\gamma_{j+2},R\right),\ldots,
\left(X_{n},\gamma_{n},R\right)\end{aligned}\] of length $n+1$, where $|\sigma|=n+D$. Let $\mathtt{out_L}(\sigma)$ and $\mathtt{out_R}(\sigma)$ be the output produced by these two runs respectively. Claim  \ref{claim:kukutransducer} says that if these runs are $j$-splitting, where $j<n$, then $\mathtt{out_L}(\sigma)=\tau*01^{n-j-1}$ and $\mathtt{out_R}(\sigma)=\tau*10^{n-j-1}$, where $|\tau|=j$ and $\tau=\mathtt{out_L}\left(\sigma\upharpoonright D+j\right)$ or $\mathtt{out_R}\left(\sigma\upharpoonright D+j\right)$. Henceforth we will say that \emph{$\sigma$ is $j$-splitting}.

Recall that the states of $\hat{A}$ are of the form $(X,\gamma,S)$; for brevity we write $\mathtt{\hat{S}}$ instead of $(X,\gamma,S)$. At any point there are exactly two runs of $\hat{A}$ which are alive;  the ``current state'' of $\hat{A}$ is not unique since $\hat{A}$ is a nondeterministic automaton. However, we can denote the ``current state'' (noninitial) of $\hat{A}$ by the pair of states $\Shat_L,\Shat_R$ corresponding to the current state of each of the living runs. Given any $b\in\{0,1\}$ and any $\sigma\in 2^{<\omega}$ with $|\sigma|>D$, let $\Shat_L,\Shat_R$ be the current state of $\hat{A}$ after reading $\sigma$. Suppose that $\mathtt{out_L}(\sigma)=\zeta*01^k$ and $\mathtt{out_R}(\sigma)=\zeta*10^k$ for some common prefix $\zeta$ and some $k\geq 0$. Recall that there are three possibilities for the current state of $\hat{A}$ after reading the next symbol $b$:
\begin{itemize}
\item[T(i)] Both runs make deterministic transitions. Then the left run transits from $\Shat_L$ to $\Shat_L^*$, and the right run transits from $\Shat_R$ to $\Shat_R^*$. In this case,  $\mathtt{out_L}(\sigma*b)=\mathtt{out_L}(\sigma)*1$ and $\mathtt{out_R}(\sigma*b)=\mathtt{out_R}(\sigma)*0$. More specifically, we will have $\mathtt{out_L}(\sigma*b)=\zeta*01^{k+1}$ and $\mathtt{out_R}(\sigma*b)=\zeta*10^{k+1}$.

\item[T(ii)] The left run makes a nondeterministic transition and the right run dies. Then the left run transits from $\Shat_L$ to $\Shat_L^*$ and $\Shat_R^*$. In this case,  $\mathtt{out_L}(\sigma*b)=\mathtt{out_L}(\sigma)*0$ and $\mathtt{out_R}(\sigma*b)=\mathtt{out_L}(\sigma)*1$. More specifically, we will have $\mathtt{out_L}(\sigma*b)=\zeta*01^{k}*0$ and $\mathtt{out_R}(\sigma*b)=\zeta*01^{k+1}$. 
\item[T(iii)] The right run makes a nondeterministic transition and the left run dies, symmetric to case T(ii). In this case,  $\mathtt{out_L}(\sigma*b)=\mathtt{out_R}(\sigma)*0$ and $\mathtt{out_R}(\sigma*b)=\mathtt{out_R}(\sigma)*1$. More specifically, we will have $\mathtt{out_L}(\sigma*b)=\zeta*10^{k+1}$ and $\mathtt{out_R}(\sigma*b)=\zeta*10^{k}*1$. 
\end{itemize}
We make a technical remark here: Since $\hat{A}$ has delay $D$, each state $\Shat$ of $\hat{A}$ also has to record the history of the most recent $D$ bits of the input scanned by $\hat{A}$; we assume that this is the case although this is not explicitly encoded in the triple  $(X,\gamma,S)$.

\subsubsection*{A discussion of problems faced in simulating $\hat{A}$}

The rough idea is for $A_d$ to read the next bit of the input $\eta$ and output the next bit of $\mathtt{out_R}(\Dy(\eta))$ by simulating the action of $\hat{A}$ on $\Dy(\eta)$. In other words, we wish for $A_d$ to write an output string $\nu$ such that $\sgnd{\nu}=\mathtt{out_R}(\Dy(\eta))$. The reason for this is that if the next bit of the input read is $b=0$ or $b=1$, then $\Dy(\eta*b)=\Dy(\eta)*b$, which means that $\mathtt{out_R}(\Dy(\eta*b))$ will be equal to either $\mathtt{out_R}(\Dy(\eta))*0$, $\mathtt{out_R}(\Dy(\eta))*1$ or $\mathtt{out_L}(\Dy(\eta))*1$, corresponding to $\hat{A}$ having a transition of type T(i), T(iii) or T(ii) respectively upon reading symbol $b$. $A_d$ can then write $b'=0,1$ or $-1$ as the next bit of the output respectively, and we will be able to maintain  $\sgnd{\nu*b'}=\mathtt{out_R}(\Dy(\eta*b))$.

However, the main difficulty arises when the next input bit read is $b=-1$. In this case, $\Dy(\eta*b)\nsupseteq \Dy(\eta)$, and in order to simulate $\hat{A}$ on the input $\Dy(\eta*b)$, we shall need $A_d$ to record certain information about the status of $\hat{A}$ in its limited memory in order for it to simulate $\hat{A}$. The least $A_d$ needs to do is to remember the state that $\hat{A}$ is in after scanning  $\Dy(\eta*(-1))$. Since $A_d$ has limited memory, it needs to keep track of this additional information regardless of what the input bit currently being read is; when we next see an input bit $-1$, we can immediately switch to simulating $\hat{A}$ on the input $\Dy(\eta*(-1))$. The second thing we need $A_d$ to remember is the position of $\mathtt{out_R}(\Dy(\eta*(-1)))$ relative to $\mathtt{out_R}(\Dy(\eta))$; this information is obviously needed by $A_d$ when it next finds $b=-1$ because it needs to know which bit to output next in order to maintain $\sgnd{\nu*b'}=\mathtt{out_R}(\Dy(\eta*b))$.

Fortunately, by the following claim, $\mathtt{out_R}(\Dy(\eta*(-1))\upharpoonright n+D)$ is either equal to, immediately to the left, or immediately to the right of $\mathtt{out_R}(\Dy(\eta)\upharpoonright n+D)$. Thus to record the relative position of $\mathtt{out_R}(\Dy(\eta))$ and $\mathtt{out_R}(\Dy(\eta*(-1)))$ we only need a single ternary bit, irrespective of the length of $\eta$.


\begin{claim}\label{claim:kukurelativeposition} Suppose that $\sigma_0,\sigma_1\in 2^{n+D}$ such that $|\overline{\sigma_0}-\overline{\sigma_1}|\leq 2^{-n-D}$. Then $|\overline{\mathtt{out_R}(\sigma_0)}-\overline{\mathtt{out_R}(\sigma_1)}|\leq 2^{-n}$.
\end{claim}
\begin{proof} Since $|\overline{\sigma_0}-\overline{\sigma_1}|\leq 2^{-n-D}$, for each $i=0,1$ there is some $\alpha_i\supset\sigma_i$ such that $\alpha_i\in 2^\omega$, and~${\overline{\alpha_0}=\overline{\alpha_1}}$. Let~${\beta_i\in 2^\omega}$ be the output produced by $\hat{A}$ on input $\alpha_i$, for $i=0,1$. If $\overline{\mathtt{out_R}(\sigma_0)}<\overline{\mathtt{out_R}(\sigma_1)}-2^{-n}=\overline{\mathtt{out_L}(\sigma_1)}$, we can apply Claim \ref{claim:kukutransducer} to get $\overline{\beta_0\upharpoonright n+3}\leq \overline{\mathtt{out_R}(\sigma_0)}+2^{-n-1}$ and $\overline{\beta_1\upharpoonright n+3}\geq \overline{\mathtt{out_L}(\sigma_1)}$. However, as $\overline{\mathtt{out_R}(\sigma_0)}\leq \overline{\mathtt{out_L}(\sigma_1)}-2^{-n}$, we see that $\overline{\beta_0\upharpoonright n+3}<\overline{\beta_1\upharpoonright n+3}-2^{-n-3}$, and so $\overline{\beta_0}\neq\overline{\beta_1}$, which is impossible. Thus $\overline{\mathtt{out_R}(\sigma_0)}\geq\overline{\mathtt{out_R}(\sigma_1)}-2^{-n}$. Apply the above symmetrically to conclude that ${|\overline{\mathtt{out_R}(\sigma_0)}-\overline{\mathtt{out_R}(\sigma_1)}|\leq 2^{-n}}$.
\end{proof}

Unfortunately, keeping track of the state of $\hat{A}\left(\Dy(\eta*(-1))\right)$ and the position of its output is still not enough. Let's illustrate with a potentially problematic scenario. Suppose $A_d$ has seen $\eta$ on its input tape and written $\nu$ on the output tape and successfully ensured that $\sgnd{\nu}=\mathtt{out_R}(\Dy(\eta))$ holds so far. Suppose the next input bit read is $0$, and that $\hat{A}$ sees a transition of type T(ii) when reading this next input bit. This means that the right run is reset, i.e., $\mathtt{out_R}(\Dy(\eta*0))=\mathtt{out_L}(\Dy(\eta))*1$, and in order to maintain $\sgnd{\nu*b'}=\mathtt{out_R}(\Dy(\eta*0))$ we have to make $A_d$ write output bit $b'=-1$ (a transition of type T(ii) causes the value of $\outr$ to decrease, i.e., $\overline{\mathtt{out_R}(\Dy(\eta*0))}<\overline{\mathtt{out_R}(\Dy(\eta))}$). Now suppose that the next input bit read by $A_d$ after that is $-1$. Since $\Dy(\eta*0*(-1))\nsupset \Dy(\eta*0)$ it could be that along the input $\Dy(\eta*0*(-1))$ we do not make any transitions that resets the right run and therefore
%
%
%
%
it is possible that $\mathtt{out_R}(\Dy(\eta*0*(-1)))=\mathtt{out_R}(\Dy(\eta))*00$.
%
Unfortunately, in this case there is no way we can ensure
$\sgnd{\nu*(-1)*b'}=\mathtt{out_R}(\Dy(\eta*0*(-1)))$, as the next bit of our output is too insignificant.

The problem above was due to the fact that we reacted to the input bit $0$ too hastily. If $A_d$ had scanned two input bits instead of just one, it would have found that the next two input bits were $0$ followed by $-1$, and would have written $\nu*0$ instead of $\nu*(-1)$. However, scanning more input bits at once does not appear to help as well, because we might find an arbitrarily long sequence of $0$s before finally seeing a $-1$. Since $A_d$ has a fixed delay, the problem above appears to be still there. Fortunately, our solution to this problem will come from the next claim.

The next claim says that there cannot be three consecutive transitions of $\hat{A}$ where we kill the left run (or the right run). This is because if there was such a sequence of transitions where we kill, for instance, the left run three times consecutively, then the output will extend a string of the form $\tau*111$. However, the transducer $\hat{A}$ was designed so that if the output was of this form, then the input would remain $j$-splitting for some $j<|\tau|$, and we would not be able to make such transitions after all.

\begin{claim}\label{claim:kukunondeterministictwice} There cannot be three consecutive transitions of type T(ii) nor three consecutive transitions of type T(iii).
\end{claim}
\begin{proof} Without loss of generality, we assume that there are three consecutive transitions of type T(ii). Let $\sigma_0\subset \sigma_1$ be such that $|\sigma_1|=|\sigma_0|+3$ and $|\sigma_0|=D+n-1$ for some $n\geq 2$, and ${\mathtt{out_R}(\sigma_1)=\mathtt{out_L}(\sigma_0)*001}$. Note that $\left|\mathtt{out_L}(\sigma_0)\right|=n-1$. Let $\beta\in 2^\omega$ be a representation for $f(\overline{\sigma_1})$. By applying Claim \ref{claim:kukutransducer}, we obtain $\overline{\beta\upharpoonright n+2}\geq \overline{\mathtt{out_L}(\sigma_0)*011}$. However, we must also have $\beta\supset\mathtt{out_R}(\sigma_1)$ or $\beta\supset\mathtt{out_L}(\sigma_1)$, so ${\overline{\beta\upharpoonright n+2}\leq\overline{\mathtt{out_R}(\sigma_1)}}$. However, this is impossible since $\mathtt{out_R}(\sigma_1)=\mathtt{out_L}(\sigma_0)*001$.
\end{proof}

Now our solution to the problem is to have $A_d$ scan the next $4$ bits of the input  (in fact, $D+4$ bits, because $A_d$ has to simulate $\hat{A}$ which already has a delay of $D$) before deciding on the next output bit. Again, suppose that $A_d$ has read the input $\eta$ and written an output $\nu$ such that
$\sgnd{\nu}=\mathtt{out_R}(\Dy(\eta))$. If the next four bits are not $0000$ then the problem above does not occur, as any subsequent bit of $-1$ will not result in a carry that invalidates the current output. Therefore the most risky situation is that the next four bits scanned are $0000$, followed by a $-1$ bit. 
Now clearly the fact that $\Dy(\eta*0000*(-1))\upharpoonright |\eta|\neq\Dy(\eta)$ is going to potentially cause us some problems, so let's review the situation described above again more carefully.

Having scanned four more bits of the input, we saw the input sequence $\eta$ being extended to $\eta*0000$. However assume that $\hat{A}$ makes a transition of type T(ii) when reading the last bit of $\Dy(\eta*0)$. Since we have no evidence that  the input stream is going to decrease in value,  we have no choice but to make $A_d$ write $b'=-1$ (as explained above). This will allow us to keep our output $\nu*(-1)$ at the same value as $\outr{(\Dy(\eta*0))}\supset \outl{(\Dy(\eta))}$, which is fine for now. However it turned out that the \emph{next} bit read is $-1$, which would mean that the input stream, when converted to binary, got ``reset'', i.e., $\Dy(\eta*0000*(-1))\nsupset\Dy(\eta)$. This would mean that potentially, the right side output of this new binary string $\Dy(\eta*0000*(-1))$ could end up following $\outr{(\Dy(\eta))}$ instead of $\outl{(\Dy(\eta))}$. Since we have already written $-1$ on our output, we cannot extend $\nu*(-1)$ to match the value along $\outr{(\Dy(\eta))}$. To prevent this problem we will argue that the right sided output of $\Dy(\eta*0000*(-1))$ \emph{must} necessarily extend $\outl{(\Dy(\eta))}$.

By Claim \ref{claim:kukunondeterministictwice}, the transitions $\hat{A}$ makes when scanning the last three bits of the input $\Dy(\eta*0000)$ cannot all be of type T(iii). Therefore, $\mathtt{out_R}(\Dy(\eta*0000))$ must extend $\mathtt{out_L}(\Dy(\eta))*0$, $\mathtt{out_L}(\Dy(\eta))*10$, $\mathtt{out_L}(\Dy(\eta))*110$ or $\mathtt{out_L}(\Dy(\eta))*1110$. In any case, even if the next bit scanned is $-1$, by Claim \ref{claim:kukurelativeposition}, the $\outr$ values of $\Dy(\eta*0000*(-1)))$ and of $\Dy(\eta))$ can only differ by their least insignificant digit. This means that $\mathtt{out_R}(\Dy(\eta*0000*(-1)))\supset \mathtt{out_L}(\Dy(\eta))$, which means that we will be able to restore $\sgnd{\nu'}=\mathtt{out_R}(\Dy(\eta*0000*(-1)))$ for some $\nu'\supset\nu$. This avoids the problematic situation where $\mathtt{out_R}(\Dy(\eta*0000*(-1)))\supset  \mathtt{out_R}(\Dy(\eta))$ described above.
The following claim proves this formally.
\begin{claim}\label{claim:kukucollapse}
Let $\eta\in\{-1,0,1\}^{<\omega}$ and $n>0$ be such that $|\eta|=n+D+3$. Then $$\sgnd{\outr\left(\Dy(\eta)\upharpoonright n+D\right)*\gamma}=\sgnd{\outr\left(\Dy(\eta*(-1))\upharpoonright n+D+|\gamma|\right)}$$ for some $\gamma\in\{\langle\rangle,1,11,111,-1,(-1)*(-1),(-1)*(-1)*(-1)\}$.
\end{claim}
\begin{proof}
Suppose that $\outr\left(\Dy(\eta)\upharpoonright n+D\right)\neq\outr\left(\Dy(\eta*(-1))\upharpoonright n+D\right)$ (if they are equal then take $\gamma=\langle\rangle$). By Claim \ref{claim:kukurelativeposition}, we can assume that $\overline{\outr\left(\Dy(\eta)\upharpoonright n+D\right)}=\overline{\outr\left(\Dy(\eta*(-1))\upharpoonright n+D\right)}\pm 2^{-n}$. 

First, let's assume that $\overline{\outr\left(\Dy(\eta)\upharpoonright n+D\right)}=\overline{\outr\left(\Dy(\eta*(-1))\upharpoonright n+D\right)}- 2^{-n}$. Therefore $\outr\left(\Dy(\eta*(-1))\upharpoonright n+D\right)$ is of the form $\mu*100\cdots 0$ with length $n$ and $\outr\left(\Dy(\eta)\upharpoonright n+D\right)$ is of the form $\mu*011\cdots 1$ with the same length $n$.

The transition from $\Dy(\eta*(-1))\upharpoonright n+D$ to $\Dy(\eta*(-1))\upharpoonright n+D+1$ cannot be of type T(iii), because otherwise $\overline{\outr\left(\Dy(\eta*(-1))\upharpoonright n+D+1\right)}=\overline{\outr\left(\Dy(\eta*(-1))\upharpoonright n+D\right)}+2^{-n-1}$, and $\overline{\outr\left(\Dy(\eta)\upharpoonright n+D+1\right)}\leq \overline{\outr\left(\Dy(\eta)\upharpoonright n+D\right)}+2^{-n-1}$, contradicting Claim \ref{claim:kukurelativeposition} applied to length $n+D+1$. If the transition is of type T(ii) then $\outr\left(\Dy(\eta)\upharpoonright n+D+1\right)=\mu*011\cdots 1$ of length $n+1$, which means that we can take $\gamma=1$. Therefore we can assume that the transition is of type T(i), which means that $\outr\left(\Dy(\eta*(-1))\upharpoonright n+D+1\right)=\mu*100\cdots 0$ with length $n+1$.

Now we examine the transition from
$\Dy(\eta*(-1))\upharpoonright n+D+1$ to $\Dy(\eta*(-1))\upharpoonright n+D+2$. For the same reason as above, it cannot be of type T(iii). If it is of type T(ii) then again $\outr\left(\Dy(\eta)\upharpoonright n+D+2\right)=\mu*011\cdots 1$ of length $n+2$, which means that we can take $\gamma=11$. Therefore, we can assume that this next transition is of type T(i), which means that $\outr\left(\Dy(\eta*(-1))\upharpoonright n+D+2\right)=\mu*100\cdots 0$ with length $n+2$. Finally the transition 
from $\Dy(\eta*(-1))\upharpoonright n+D+2$ to $\Dy(\eta*(-1))\upharpoonright n+D+3$ cannot be of type T(iii) for the same reason, and we take $\gamma=111$ if it is of type T(ii). So we assume it is of type T(i), which means that $\outr\left(\Dy(\eta*(-1))\upharpoonright n+D+3\right)=\mu*100\cdots 0$ with length $n+3$.

Now by Claim \ref{claim:kukunondeterministictwice}, one of the three transitions of $\Dy(\eta)\upharpoonright n+D+i$ to $\Dy(\eta)\upharpoonright n+D+i+1$ for $i=0,1,2$ cannot be of type T(iii). This means that (by examining cases) $\overline{\outr\left(\Dy(\eta)\upharpoonright n+D+3\right)}<\overline{\mu*0111\cdots 1}$ with length $n+3$. Comparing this value with $\outr\left(\Dy(\eta*(-1))\upharpoonright n+D+3\right)=\mu*100\cdots 0$ would contradict Claim \ref{claim:kukurelativeposition}. 


The case where $\overline{\outr\left(\Dy(\eta)\upharpoonright n+D\right)}=\overline{\outr\left(\Dy(\eta*(-1))\upharpoonright n+D\right)}+2^{-n}$ is symmetric.
%
In this case, the string $\outr\left(\Dy(\eta*(-1))\upharpoonright n+D\right)$ is of the form $\mu*011\cdots 1$ of length $n$, and the string $\outr\left(\Dy(\eta)\upharpoonright n+D\right)$ is of the form $\mu*100\cdots 0$ of the same length $n$. We proceed as above, but this time arguing that if the transition from $\Dy(\eta*(-1))\upharpoonright n+D+i$ to $\Dy(\eta*(-1))\upharpoonright n+D+i+1$ for $i=0,1,2$ is of type T(iii) then we would take $\gamma\in \{-1, (-1)*(-1), (-1)*(-1)*(-1)\}$. 
\end{proof}


\subsubsection*{Describing $A_d$ formally}

$A_d$ will have delay $D+3$. After scanning $n+D+3$ many bits of an input $\eta$, $A_d$ would have written $n$ bits of the output and will need to remember the last $4$ bits of $\Dy\left(\eta\right)$ and the state that $\hat{A}$ is in after reading $\Dy\left(\eta\right)\upharpoonright n+D$ as well as the state that $\hat{A}$ is in after reading $\Dy\left(\eta*(-1)\right)\upharpoonright n+D$. Denote these as $\delta, \left\langle\Shat_L^+,\Shat_R^+\right\rangle$ and $\langle\Shat_L^-,\Shat_R^-\rangle$ respectively. By Claim \ref{claim:kukurelativeposition}, $\overline{ \outr\left( \Dy\left(\eta*(-1)\right)\upharpoonright n+D\right) }=\overline{\outr\left( \Dy\left(\eta\right)\upharpoonright n+D\right)}+\iota 2^{-n}$
where $\iota=-1,0$ or $1$. The states of $A_d$ will be of the form $\left(\delta,\iota,\langle\Shat_L^+,\Shat_R^+\rangle,\langle\Shat_L^-,\Shat_R^-\rangle\right)$ where $\delta$ is a binary string of length $4$, $\iota\in\{-1,0,1\}$ and $\Shat_L^+,\Shat_R^+,\Shat_L^-,\Shat_R^-$ are states of $\hat{A}$.

Since $A_d$ is not interested in inputs representing a negative number, we can, for instance, first prepare the input by replacing it with $0^\omega$ if the first nonzero digit is $-1$. Therefore, we can assume that the input to $A_d$ is an infinite string in $\{-1,0,1\}^\omega$ where the first nonzero digit is $1$. Let $\Shat_{init}$ be the initial state of $\hat{A}$. $A_d$ starts with the initial state $\left(0000,0,\langle \Shat_{init},\Shat_{init}\rangle,\langle \Shat_{init},\Shat_{init}\rangle\right)$ which  will never be visited again once $A_d$ has started reading the input. It then reads the first $D+4$ many bits of the input and transits to the state $\left(\delta,0,\langle\Shat_L^+,\Shat_R^+\rangle,\langle\Shat_L^-,\Shat_R^-\rangle\right)$, where $\delta$ is the last four bits of $\Dy(\eta)$ and $\eta$ is the first $D+4$ bits of the input read and $\langle\Shat_L^+,\Shat_R^+\rangle,\langle\Shat_L^-,\Shat_R^-\rangle$ are the states that $\hat{A}$ would be in after reading $\Dy\left(\eta\right)\upharpoonright D+1$ and $\Dy\left(\eta*(-1)\right)\upharpoonright D+1$ respectively.  $A_d$ then writes the first output bit $1$.

Now we describe the transition of $A_d$ from a noninitial state  $\left(\delta,\iota,\langle\Shat_L^+,\Shat_R^+\rangle,\langle\Shat_L^-,\Shat_R^-\rangle\right)$ to the state $\left(\delta^*,\iota^*,\langle\Shat_0^*,\Shat_1^*\rangle,\langle\Shat_2^*,\Shat_3^*\rangle\right)$. Assume the next input bit read is $b\in\{-1,0,1\}$. Let $\eta*b$ denote the input read so far; assume that $|\eta|=n+D+3$. $A_d$ of course remembers only $\delta$ and the last $4$ bits of $\Dy(\eta)$ and has no access to all of $\Dy(\eta)$. Nevertheless, it is easy to verify the following:
\begin{itemize}
\item If $b=1$ then $\Dy(\eta*b)=\Dy(\eta)*1$ and $\Dy\left(\eta*b*(-1)\right)=\Dy(\eta)*01$.
\item If $b=0$ then $\Dy(\eta*b)=\Dy(\eta)*0$ and $\Dy\left(\eta*b*(-1)\right)=\Dy\left(\eta*(-1)\right)*1$.
\item If $b=-1$ then $\Dy(\eta*b)=\Dy\left(\eta*(-1)\right)$ and $\Dy\left(\eta*b*(-1)\right)=\left(\Dy\left(\eta*(-1)\right)\upharpoonright n+D+3\right)*01$.
\end{itemize}
This gives us a way to determine $\left(\delta^*,\iota^*,\langle\Shat_0^*,\Shat_1^*\rangle,\langle\Shat_2^*,\Shat_3^*\rangle\right)$ from $\left(\delta,\iota,\langle\Shat_L^+,\Shat_R^+\rangle,\langle\Shat_L^-,\Shat_R^-\rangle\right)$. $A_d$ will simulate~$\hat{A}$ by feeding the next bit of a binary input $b_{sim}$ to $\hat{A}$. 
Additionally, $A_d$ also has to pick which state of~$\hat{A}$ to use when scanning the new binary bit $b_{sim}$. Denote this state by $S_{sim}$. The following table summarizes this.

\begin{center}
\begin{tabular}{|l|c|c|c|c|c|}
\hline
 & $\delta^*$ & $S^+_{sim}$ & $b^+_{sim}$ & $S^-_{sim}$ & $b^-_{sim}$ \\\hline

\multirow{2}{*}{$b=1$} & \multirow{2}{*}{$\delta(1)\delta(2)\delta(3)1$} & \multirow{2}{*}{$\langle\Shat_L^+,\Shat_R^+\rangle$} & \multirow{2}{*}{$\delta^*(0)$} & \multirow{2}{*}{$\langle\Shat_L^+,\Shat_R^+\rangle$} &  \multirow{2}{*}{$\delta^*(0)$}\\

&&&&& \\\hline

$b=0$ and & \multirow{2}{*}{$\delta(1)\delta(2)\delta(3)0$} & \multirow{2}{*}{$\langle\Shat_L^+,\Shat_R^+\rangle$} &  \multirow{2}{*}{$\delta^*(0)$} & \multirow{2}{*}{$\langle\Shat_L^-,\Shat_R^-\rangle$} & \multirow{2}{*}{$1$}\\

$\delta\in\{0000,1000\}$ &&&&&\\\hline

$b=0$ and  & \multirow{2}{*}{$\delta(1)\delta(2)\delta(3)0$} & \multirow{2}{*}{$\langle\Shat_L^+,\Shat_R^+\rangle$} &  \multirow{2}{*}{$\delta^*(0)$} & \multirow{2}{*}{$\langle\Shat_L^-,\Shat_R^-\rangle$} & \multirow{2}{*}{$\Dy(\delta^**(-1))(0) $}  \\

$\delta\not\in\{0000,1000\}$ &&&&&  \\\hline

$b=-1$ and & \multirow{2}{*}{$1111$} & \multirow{2}{*}{$\langle\Shat_L^-,\Shat_R^-\rangle$} & \multirow{2}{*}{$\delta^*(0)$} & \multirow{2}{*}{$\langle\Shat_L^-,\Shat_R^-\rangle$}  &  \multirow{2}{*}{$\delta^*(0)$}\\

$\delta\in\{0000,1000\}$ &&&&&\\\hline

$b=-1$ and & Last $4$ bits  & \multirow{2}{*}{$\langle\Shat_L^-,\Shat_R^-\rangle$} & \multirow{2}{*}{$\delta^*(0)$} & \multirow{2}{*}{$\langle\Shat_L^-,\Shat_R^-\rangle$}  & \multirow{2}{*}{$\delta^*(0)$} \\

 $\delta\not\in\{0000,1000\}$ & of $\Dy(\delta*(-1))$ &&&& \\\hline
\end{tabular}
\end{center}

%
%

Now take $\langle\Shat_0^*,\Shat_1^*\rangle$ to be the result of applying $\hat{A}$ to the state $S^+_{sim}$ with bit $b^+_{sim}$. Take $\langle\Shat_2^*,\Shat_3^*\rangle$ to be the result of applying $\hat{A}$ to the state $S^-_{sim}$ with bit $b^-_{sim}$. We want to take $\iota^*$ to encode the relative positions of $\outr\left( \Dy\left(\eta*b*(-1)\right)\upharpoonright n+D+1\right)$ and $\outr\left( \Dy\left(\eta*b\right)\upharpoonright n+D+1\right)$. In the first, fourth and fifth rows, $\iota^*=0$ since $\Dy\left(\eta*b\right)\upharpoonright n+D+1=\Dy\left(\eta*b*(-1)\right)\upharpoonright n+D+1$. In the second and third rows, we can tell $\iota^*$ by looking at $\iota$ and the type of transition $\hat{A}$ makes when applying  state $S^+_{sim}$ with bit $b^+_{sim}$ and when applying state $S^-_{sim}$ with bit $b^-_{sim}$.

Now we describe the output bit written by $A_d$ after reading $\eta*b$.
We first consider the case where~${b=0,1}$. Since $\Dy(\eta*b)=\Dy(\eta)*b$, we look at the transition type $\hat{A}$ makes when the state~$\langle\Shat^+_L,\Shat^+_R\rangle$ reads $\delta^*(0)$. If this is type T(i) we output $0$, for T(ii) we output $-1$ and for T(iii) we output $1$. Now suppose that $b=-1$. If $\iota=0$ then we look at the transition type $\hat{A}$ makes when the state~$\langle\Shat^-_L,\Shat^-_R\rangle$ reads~$\delta^*(0)$ and output $0,-1$ or $1$ respectively. However, if $\iota\neq 0$ then by Claim \ref{claim:kukucollapse},
\[\outr\left(\Dy(\eta)\upharpoonright n+D\right)*\gamma=\outr\left(\Dy(\eta*(-1))\upharpoonright n+D+|\gamma|\right)\] for some $\gamma\in\{1,11,111,-1,(-1)*(-1),(-1)*(-1)*(-1)\}$. Furthermore, we can tell which case holds by looking at $\iota$, $\langle\Shat^+_L,\Shat^+_R\rangle$, $\delta$, $\langle\Shat^-_L,\Shat^-_R\rangle$, $\delta^*$. We output $\gamma$. (Of course, if $|\gamma|>1$ we do not output anything else for the next few steps.)

\subsubsection*{Verifying $A_d$}

We now verify that $A_d$ correctly computes $f$. First, we show that the parameters of $A_d$ are represented correctly:

\begin{claim} Let $\eta$ be an input string of length $n+D+3$ scanned by $A_d$ for some $n>0$, and let $\left(\delta,\langle\Shat_L^+,\Shat_R^+\rangle,\langle\Shat_L^-,\Shat_R^-\rangle\right)$ be the resulting $A_d$-state. Then $\delta$ is the last four bits of $\Dy(\eta)$, $\langle\Shat_L^+,\Shat_R^+\rangle$ and~$\langle\Shat_L^-,\Shat_R^-\rangle$ are the $\hat{A}$-states after scanning $\Dy(\eta)\upharpoonright n+D$ and $\Dy(\eta*(-1))\upharpoonright n+D$ respectively, and \[\overline{ \outr\left( \Dy\left(\eta*(-1)\right)\upharpoonright n+D\right) }=\overline{\outr\left( \Dy\left(\eta\right)\upharpoonright n+D\right)}+\iota 2^{-n}.\]
\end{claim}
\begin{proof}
This follows by an easy induction on $n$ using the definition of $A_d$. The base case $n=1$ is clear. Assume it holds for $n$. The fact that $\delta^*$ is the last four bits of $\Dy(\eta*b)$ is easy; in the case $\delta=0000$ and $b=-1$ we have to use the fact that the first nonzero bit of $\eta$ is assumed to be $1$.
\end{proof}

We now verify that $A_d$ simulates $\hat{A}$ correctly:

\begin{claim}\label{claim:correctlysimulates} Let $\alpha\in\{-1,0,1\}^\omega$. There are infinitely many $n$ such that $\sgnd{\nu}=\overline{\mathtt{out_R}\left(\Dy(\eta)\upharpoonright n+D\right)}$, where $\eta=\alpha\upharpoonright n+D+3$ and $\nu$ be the output produced by $A_d$ after scanning $\eta$. 
%
%
\end{claim}
\begin{proof}We argue by induction on $n$, the length of the output. For the base case $n=1$, we have $\mathtt{out_R}\left(\Dy\left(\alpha\upharpoonright 4+D\right)\upharpoonright 1+D\right)=1$, since this is what $\hat{A}$ does after scanning the first $1+D$ bits. Furthermore, the first output bit of $A_d$ is always $1$, so the base case holds. Assume this holds for $\nu$ of length~$n$ and $\eta=\alpha\upharpoonright n+D+3$. Let $b=\alpha\left(n+D+3\right)$ be the next bit of the output, $\tau=\Dy(\eta)\upharpoonright n+D$ and $\tau'=\Dy(\eta*(-1))\upharpoonright n+D$. By the hypothesis we have $\sgnd{\nu}=\overline{\outr(\tau)}$.

If $b=0$ or $1$, then $\Dy(\eta*b)\upharpoonright n+D+1=\Dy(\eta)*b\upharpoonright n+D+1=\Dy(\eta)\upharpoonright n+D+1=\tau*\delta^*(0)$. Therefore, $\overline{\mathtt{out_R}\left(\tau\right)} - \overline{\mathtt{out_R}\left(\Dy(\eta*b)\upharpoonright n+D+1\right)}= \overline{\mathtt{out_R}\left(\tau\right)} - \overline{\mathtt{out_R}\left(\tau*\delta^*(0)\right)}$. This last quantity depends on the transition type $\hat{A}$ takes after scanning $\tau$ and next reads $\delta^*(0)$. However, the action of $A_d$ exactly mirrors this to ensure that $\sgnd{\nu}-\sgnd{\nu*d}$ is the same as this quantity, where $d$ is the next output bit written by $A_d$.

Now suppose that $b=-1$. Then we have $\Dy(\eta*b)\upharpoonright n+D+1=\tau'*\delta^*(0)$. If $\iota=0$ then ${\overline{\mathtt{out_R}\left(\tau\right)}=\overline{\mathtt{out_R}\left(\tau'\right)}}$, which means that
\[\overline{\mathtt{out_R}\left(\tau\right)} - \overline{\mathtt{out_R}\left(\Dy(\eta*b)\upharpoonright n+D+1\right)}=\overline{\mathtt{out_R}\left(\tau'\right)}-\overline{\mathtt{out_R}\left(\tau'*\delta^*(0)\right)}.\]
This last quantity again depends on the transition type $\hat{A}$ takes after scanning $\tau'$ and next reads $\delta^*(0)$. The action of $A_d$ exactly mirrors this to ensure that $\sgnd{\nu}-\sgnd{\nu*d}$ is the same as this quantity.

Now finally suppose that $b=-1$ and $\iota\neq 0$. Then $A_d$ will write $\gamma$, where
\[
\outr\left(\tau\right)*\gamma=\outr\left(\Dy(\eta*(-1))\upharpoonright n+D+|\gamma|\right).
\]
Therefore, $\sgnd{\nu*\gamma}=\overline{\outr\left(\Dy(\eta*(-1))\upharpoonright n+D+|\gamma|\right)}$. Since $b=-1$, this means that $\eta$ cannot be all zeroes. Furthermore, by our assumption, the first nonzero bit of $\eta$ is $1$. This means that the last bit of $\Dy(\eta*(-1))$ is~$1$, and thus $\Dy(\alpha\upharpoonright n+D+3+j)\supset \Dy(\eta*(-1))\upharpoonright n+D+3$ for any $j>0$. Since $|\gamma|\leq 3$, we have that $\Dy(\alpha\upharpoonright n+D+3+|\gamma|)\upharpoonright n+D+|\gamma|=\Dy(\eta*(-1)) \upharpoonright n+D+|\gamma|$. Thus, the claim holds for output of length~${n+|\gamma|}$.
\end{proof}

\begin{claim}
Given any $\alpha\in\{-1,0,1\}^\omega$ such that $\sgnd{\alpha}\in [0,1]$, if $\beta$ is the output produced by $A_d(\alpha)$, we have $\sgnd{\beta}=f\left(\sgnd{\alpha}\right)$.
\end{claim}
\begin{proof}
Since we assumed that the first nonzero bit of $\alpha$ is $1$, we have $\overline{\Dy\left(\alpha\upharpoonright n+D+3\right)}=\sgnd{\alpha\upharpoonright n+D+3}$ for all $n$. By Claim \ref{claim:correctlysimulates} we fix an $n$ so that $\overline{\mathtt{out_R}\left(\Dy(\alpha\upharpoonright n+D+3)\upharpoonright n+D\right)}=\sgnd{\beta\upharpoonright n}$. Applying Claim
\ref{claim:kukutransducer} to the input string $\Dy\left(\alpha\upharpoonright n+D+3\right)*0^\omega$ for $\hat{A}$ with a run length of $n+4$, we see that
\[
\left|f\left(\overline{\Dy\left(\alpha\upharpoonright n+D+3\right)}\right)  -   \overline{\mathtt{out_R}\left( \Dy\left(\alpha\upharpoonright n+D+3\right) \right)}\right|\leq 2^{-n-3}.
\]
Now $\left|\sgnd{\beta}-f\left(\sgnd{\alpha}\right)\right|$ is bounded above by the sum of the quantities
\begin{itemize}
\item $\left|\sgnd{\beta}- \sgnd{\beta\upharpoonright n}\right|$,
\item $\left|f\left(\sgnd{\alpha}\right) - f\left(\sgnd{\alpha\upharpoonright n+D+3}\right)\right|$,
\item $\left| f\left(\sgnd{\alpha\upharpoonright n+D+3}\right) -   \overline{\mathtt{out_R}\left( \Dy\left(\alpha\upharpoonright n+D+3\right) \right)}\right|$.
\item $\left| \overline{\mathtt{out_R}\left( \Dy\left(\alpha\upharpoonright n+D+3\right) \right)} -  \sgnd{\beta\upharpoonright n}\right|$.
\end{itemize}
The first term is bounded by $2^{-n}$ while the second term is bounded by $O(2^{-n-D-3})$ since $f$ is Lipschitz. As for the third term,
note that since $\overline{\Dy\left(\alpha\upharpoonright n+D+3\right)}=\sgnd{\alpha\upharpoonright n+D+3}$, it is bounded  by $2^{-n-3}$. As for the fourth term, since $\sgnd{\beta\upharpoonright n}=\allowbreak\overline{\mathtt{out_R}\left(\Dy(\alpha\upharpoonright n+D+3)\upharpoonright n+D\right)}$, it is bounded by $2^{-n-1}+2^{-n-2}+2^{-n-3}$. Since $n$ can be made arbitrarily large, we conclude that  $\left|\sgnd{\beta}-f\left(\sgnd{\alpha}\right)\right|=0$.
\end{proof}




This concludes the proof of Proposition \ref{lemma:kukudeterminsticnondeterministic}.
\end{proof}

\section{Proof of Theorem~\ref{thm:mainreg}}\label{sec:mainreg}

This section is devoted to the proof of Theorem~\ref{thm:mainreg}.
We first note that, given a total computable~$f$, checking that $f$ is regular is a $\Sigma^0_2$ problem.
By Theorem~\ref{thm:kuku1}, it is sufficient to check whether there exists a deterministic transducer $M$ working in signed binary such that, for every rational $q$, $f(q)$~is equal to~$M(\overline{\overline{q}})$ (as a real number).
The equality of two computable real numbers is $\Pi^0_1$, and this gives the desired  upper bound $\Sigma^0_2$.

We now prove $\Sigma^0_2$-completeness.
Fix a $\Sigma^0_2$-complete set $S$. Our goal is to produce a computable sequence of functions $(f_z)_{z \in \mathbb{N} }$ in $C[0,1]$ with the following properties:

\begin{itemize}
\item $z \notin S$  implies that $f_z$ is  Lipschitz, pointwise linear-time computable, linear almost everywhere, and such that $f[\mathbb{Q}\cap[0,1]]\subseteq\mathbb{Q}$, but is \emph{not} $b$-regular for any base $b$;

\item $z\in S$ implies $f_z$ is $2$-regular.
\end{itemize}
Here $f_z$ is represented as the index of a Turing functional. Fix a computable relation $R$ such that
 $$z \notin S \iff \exists^{\infty} y (R(z,y)=1). $$

In order to keep $f_z$ linear-time computable, we will need to approximate the membership of $S$ very slowly. By suitably modifying $R$, we can assume that $R(z,y)$ can be computed in $O(y)$ many steps. Since $z$ is fixed, we let $r(s)=R(z,s)$. Define the pairwise disjoint sequence of intervals $(J_n)_{n>2}$ by $J_n=\left(2^{-n}-2^{-2n},2^{-n}+2^{-2n}\right)$.

\begin{claim}\label{claim:kukudelta}
Define the function $\delta\colon 2^{<\omega}\rightarrow 2^{<\omega}$ such that for any $\sigma\in 2^{<\omega}$,
\[\delta(\sigma)=\begin{cases}
1^n&\text{if $\overline{\sigma}\in J_n$ for some $n>2$},\\
0 &\text{if $\displaystyle\overline{\sigma}\not\in\bigcup_{n=3}^\infty J_n$.}
\end{cases}\]
Then $\delta$ can be computed in $O(|\sigma|)$ many steps.
\end{claim}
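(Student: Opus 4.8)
The plan is to imitate the linear-time computation of $\tilde\delta$ from the proof of Proposition~\ref{prop:counterexlinear}; the present situation is in fact slightly easier, since the half-width of $J_n$ is $2^{-2n}$ rather than $2^{-n^2}$. The goal is to reduce, in $O(|\sigma|)$ steps, the question ``is $\overline{\sigma}\in J_n$ for some $n>2$?'' to a purely syntactic test on a window of bits of $\sigma$ whose length is $O(|\sigma|)$.

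First I would scan $\sigma$ from the left to find the position $p$ of its leftmost $1$, outputting $0$ immediately if $\sigma$ consists only of zeros (then $\overline{\sigma}=0\notin\bigcup_{n>2}J_n$); this costs $O(|\sigma|)$ steps, and afterwards $\overline{\sigma}\in[2^{-p-1},2^{-p})$. Since the intervals $J_n$ (for $n>2$) are pairwise disjoint and $J_n$ lies around $2^{-n}$, the only $n$ for which $\overline{\sigma}\in J_n$ is possible are $n=p+1$ (where $\overline{\sigma}$ approaches $2^{-p-1}$ from above) and $n=p$ (where $\overline{\sigma}$ approaches $2^{-p}$ from below), so it suffices to test these two candidates subject to the constraint $n>2$. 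For $n=p+1$: as $\overline{\sigma}\ge 2^{-p-1}$ automatically and $2^{-2p-2}=\sum_{j\ge 2p+2}2^{-j-1}$, one checks that $\overline{\sigma}\in J_{p+1}$ holds iff $\sigma(j)=0$ for every $j$ with $p<j<\min\{|\sigma|,2p+2\}$. For $n=p$: as $\overline{\sigma}<2^{-p}$ automatically and $2^{-p}-2^{-2p}=\overline{0^p1^p}$, writing out the lexicographic comparison shows that $\overline{\sigma}\in J_p$ holds iff $\sigma(j)=1$ for every $j$ with $p<j<2p$ and $\sigma(j)=1$ for some $j$ with $2p\le j<|\sigma|$. (The last clause excludes the left endpoint $2^{-p}-2^{-2p}$ of the open interval $J_p$, and reflects the familiar non-injectivity of binary representations at dyadic rationals; note also that it forces $|\sigma|>2p$, so the bits named in the preceding clause exist.) Then $\delta(\sigma)$ outputs $1^{p+1}$, $1^{p}$, or $0$ according to which, if either, of the two tests succeeds.

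It remains to check the time bound, which is the one point I would treat with care. Computing $2p$ and $2p+2$ from $p$ in unary is $O(p)$; each of the two bit-window scans is $O(p)$; emitting $1^{p+1}$ or $1^{p}$ is $O(p)$; and $p\le|\sigma|$, so the total is $O(|\sigma|)$. The genuine subtlety is to ensure that no step costs more than this --- in particular, nothing proportional to $n^2$, to $2^{|\sigma|}$, or to the possibly enormous trailing block of zeros in a string that represents $2^{-n}$ exactly. Truncating every search at position $\min\{|\sigma|,2p+2\}$ achieves exactly this: if $\overline{\sigma}=2^{-n}$ is witnessed by a very short $\sigma$ then the scan halts early, while if $\overline{\sigma}\in J_n\setminus\{2^{-n}\}$ then necessarily $|\sigma|>2n$, so the window of length $O(n)$ does fit inside $\sigma$. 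I do not expect any deeper obstacle; the entire content of the claim is this bookkeeping that keeps the running time linear.
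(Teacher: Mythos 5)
Your proof is correct and takes essentially the same approach as the paper: split the test ``$\overline{\sigma}\in J_n$'' into the two one-sided cases ($\overline{\sigma}$ just above $2^{-n}$, $\overline{\sigma}$ just below $2^{-n}$), locate the leftmost $1$ of $\sigma$, and reduce each one-sided test to a fixed-width bit scan of length $O(|\sigma|)$. The paper organizes this as two auxiliary functions $\delta^+$ and $\delta^-$ and is a bit loose about $0$- vs.\ $1$-indexing of $\sigma$; your write-up folds the two tests into one pass and is more careful about the index arithmetic, but the content is identical.
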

\begin{proof}
Let $\delta^+$ and $\delta^-$ be defined as $\delta$, but with $J_n$ replaced by $\left[2^{-n},2^{-n}+2^{-2n}\right)$ and $\left(2^{-n}-2^{-2n},2^{-n}\right)$ respectively. We first show that we can compute $\delta^+(\sigma)$ in  $O(|\sigma|)$ many steps.
Given $\sigma$ on the input tape we scan $\sigma$
for the first digit $n$ such that $\sigma(n)=1$. If we reach the end of $\sigma$ without finding $n$, or if we find that $n\leq 2$, then we conclude that $\overline{\sigma}=0$ or $\overline{\sigma}\geq 2^{-2}$ and write $0$ on the output tape. Otherwise, test
if $\sigma(j)=0$ for every $n<j<\min\{|\sigma|,2n+1\}$, and if so, we halt with $1^n$ on the output tape. Otherwise, output $0$. This procedure clearly runs in $O(|\sigma|)$ steps and outputs $\delta^+(\sigma)$.

To compute $\delta^-(\sigma)$, we scan for the first $n$ such that $\sigma(n+1)=1$. We assume that $n\geq 3$, otherwise we conclude that $\overline{\sigma}=0$ or $\overline{\sigma}\geq 2^{-3}$ and we can output $0$ on the output tape. Next we test if $\sigma(j)=1$ for every $n+1\leq j \leq 2n$ and also for some $j>2n$. If one of these fails we output $0$, otherwise output $1^{n}$. This procedure clearly runs in $O(|\sigma|)$ steps and outputs $\delta^-(\sigma)$.
\end{proof}

Now define $f_z$ to be the following function:
\[f_z(x)=\begin{cases}
x+\left(2^{-2n}-2^{-n} \right) &\text{if $\displaystyle x\in \left(2^{-n}-2^{-2n},2^{-n}-2^{-2n-1}\right]$ and $r(n)=1$ for some $n\geq 3$,}\\
2^{-2n-1} &\text{if $\displaystyle x\in \left(2^{-n}-2^{-2n-1},2^{-n}+2^{-2n-1}\right)$ and $r(n)=1$ for some $n\geq 3$,}\\
-x+\left( 2^{-2n}+2^{-n}\right) &\text{if $\displaystyle x\in \left[2^{-n}+2^{-2n-1},2^{-n}+2^{-2n}\right)$ and $r(n)=1$ for some $n\geq 3$,}\\
0 &\text{if $\displaystyle x\in J_n$ and $r(n)=0$ for some $n\geq 3$,}\\
0 &\text{if $\displaystyle x\not\in\bigcup_{n=3}^\infty J_n$.}
\end{cases}
\]
$f_z$ is evidently effectively continuous, and we can compute uniformly in $z$ an index for a Turing functional representing $f_z$. Clearly we have $f_z[\mathbb{Q}\cap[0,1]]\subseteq\mathbb{Q}$ and $|f_z(x)-f_z(y)|\leq |x-y|$ for any $x,y\in [0,1]$. Now if $z\in S$ then $r(n)=0$ for almost all $n$, and so $f_z$ is a piecewise linear function with dyadic coordinates at the breakpoints. By Lemma~\ref{cor:piecewiselinearregular}, $f_z$ is $2$-regular.

Now suppose that $z\not\in S$. Given any $\sigma\in 2^{<\omega}$, it takes (by Claim \ref{claim:kukudelta}) $O(|\sigma|)$ many steps to compute~$\delta(\sigma)$ and in the case where $\overline{\sigma}\in J_n$, also linear-time many steps to evaluate $r(n)$ since $n=O(|\sigma|)$. Finally, in the case where $r(n)=1$, it takes another $O(|\sigma|)$ steps to figure out which of the three subcases in the definition of $f_z(\overline{\sigma})$ applies and to conduct the bitwise operations necessary to produce the output  $f_z(\overline{\sigma})$. So, $f_z$ is pointwise linear-time computable.

It remains to argue that $f_z$ is not $b$-regular for any base $b>1$. Let $b=2^\ell\cdot L$, where $L$ is odd and $\ell\geq 0$. Suppose that the graph of $f_z$ is accepted by some deterministic automaton $M$ in base $b$. The result of applying $M$ to the infinite string $0^\omega$ will be $\eta*\tau^\omega$ for some $\eta,\tau\in b^{<\omega}$, and its value will depend only on the state in which $M$ is in when starting the computation. Therefore, for any $d\in\mathbb{D}_b$, $f(d)=d'+b^{-|d'|}q$ where $d'\in \mathbb{D}_b$ and $q\in C$ where $C$ is a finite set of rationals, and $|d|=|d'|$. Let $C=\left\{\frac{q_i}{q_i'}\mid i=0,\ldots j\right\}$.

First assume that $\ell>0$. Fix $n_0,n_1$ such that $r(\ell n_0-n_1)=1$; we can always pick $n_1<\ell$, so $2^{\ell n_0-2n_1}$ can be made arbitrarily large and we may assume that $2^{\ell n_0-2n_1+1}>\max\{q_i'\mid i=0,\ldots,j\}$. Now since $2^{-\ell n_0+n_1}=L^{n_0}\cdot 2^{n_1}\cdot b^{-n_0}\in\mathbb{D}_b$, this means that $f\left(2^{-\ell n_0+n_1}\right)=C_2\cdot b^{-n_0}+q_{i_1}\cdot(q_{i_1}')^{-1}\cdot b^{-n_0}$ for some $C_2$ and~$i_1$. Since $f\left(2^{-\ell n_0+n_1}\right)=2^{-2\ell n_0+2n_1-1}$, it follows that $2^{\ell n_0-2n_1+1}$ divides $L^{n_0} q_{i_1}'$, which is a contradiction since $L$ is odd.

Now assume that $\ell=0$ and hence $b$ is odd. Consider an $n$ large enough so that ${2^{2n+1}>\max\{q_i'\mid i=0,\ldots,j\}}$ and $r(n)=1$. Unlike in the previous case, we cannot use the center of the interval $J_n$ to obtain a contradiction, since it isn't in $\mathbb{D}_b$. However since $\mathbb{D}_b$ is dense in $[0,1]$, let $C_0$ and~$D_0$ be such that $C_0\cdot b^{-D_0}\in\left(2^{-n}-2^{-2n-1},2^{-n}+2^{-2n-1}\right)$. Then $f\left(C_0\cdot b^{-D_0}\right)=C_1\cdot b^{-D_0}+q_{i_0}\cdot (q_{i_0}')^{-1}\cdot b^{-D_0}\cdot $ for some $C_1$ and~$i_0$. Since $f\left(C_0\cdot b^{-D_0}\right)=2^{-2n-1}$, it follows that $2^{2n+1}$ divides $b^{D_0}\cdot q_{i_0}'$, which is a contradiction since $b$~is odd.

This completes the proof of Theorem~\ref{thm:mainreg}.

\section{Proof of Theorem~\ref{thm:3}}\label{sec:thm:3}

Now we turn our attention to the global approach. 
 In order to make the following discussion clearer, we recall some definitions. Our result (Theorem \ref{thm:3}) is concerned with the complexity of identifying computable presentations of $C[0,1]$ as a Banach space. Recall that a Banach space is a complete normed vector space. A \emph{computable Banach space} is a tuple $(A, d, +, (r\cdot)_{r\in \mathbb{Q}})$ where $A$ is a computable set of special points (that are dense in the underlying metric), $d:A\times A\rightarrow \mathbb{R}$ is a computable metric, $+$ and $(r\cdot)_{r\in \mathbb{Q}}$ are computable functions defined on $A$ representing the vector space operations. Each (partial) computable Banach space can be encoded effectively by an index, and thus we can effectively list all partial computable Banach spaces $B_0,B_1,\cdots$. We say that $(A, d, +, (r\cdot)_{r\in \mathbb{Q}})$ is a computable presentation of a Banach space, say, $(C[0,1], d_{sup}, +, (r\cdot)_{r \in \mathbb{Q}})$, if there is an isometric isomorphism between $(C[0,1], d_{sup}, +, (r\cdot)_{r \in \mathbb{Q}})$ and the completion of $(A, d, +, (r\cdot)_{r\in \mathbb{Q}})$.

We assume that the reader is familiar with the elementary calculus of quantifiers in the arithmetical hierarchy, oracle computation, and with standard notation such as $\bf 0'$, $x \oplus \bf{0'}$ and $\Sigma^0_2(\bf{0'})$. For that, we cite the first few chapters of~\cite{Soa}. It should also be clear how to define an $X$-computable function $f:[0,1] \rightarrow \mathbb{R}$ (here $X$ is an oracle): replace the term ``computable'' with ``$X$-computable'' in the definition (see Subsection~\ref{subs:comp}).
We will also typically identify $f \in C[0,1]$ with an oracle capable of computing $f$ and write, e.g.,
$f \oplus \bf{0'}$, which should be understood as ``if $f$ is $X$-computable then the process under consideration is $X \oplus \bf{0'}$-computable''.
 It is also fairly easy to see that, for two $X$-computable functions $f$ and $g$ in $C[0,1]$, the functions $\max(f,g)$ and $\min(f,g)$ are also $X$-computable (folklore).
 The same can be said about applying the standard operations such as $+$ to $X$-computable functions. Finally, it is well known that, for an $X$-computable $f:[0,1] \rightarrow \mathbb{R}$, its maximum and minimum are $X$-computable reals (although $\{x: f(x) = \max_{y \in [0,1]} f(y)\}$ does not have to contain $X$-computable reals, we will not need this). Not much beyond this knowledge is needed to follow the proof.

\medskip

 We begin by confirming the computational strength required to compute the auxiliary functions we will need for our main result; as previously mentioned, Brown used a different approach for these auxiliary functions in \cite{BrownThesis}.
Recall that, in a metric space $(M,d)$, a Cauchy sequence $(x_i)_{i \in \mathbb{N}}$ is quick for a point $z \in M$ if  $d(x_i,z) < 2^{-i}$ for every $i$.

\begin{lemma}If $(A, d, +, (r\cdot)_{r\in \mathbb{Q}})$ is a computable presentation of $(C[0,1], d_{sup}, +, (r\cdot)_{r \in \mathbb{Q}})$, there is a $\mathbf{0'}$\nobreakdash-computable sequence of $A$-rational points which forms a quick Cauchy sequence for the constant function~$1$.
\end{lemma}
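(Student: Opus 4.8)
The plan is to characterize the constant function $1$ intrinsically and then use a $\mathbf{0'}$ search to locate it. The key observation is that $1$ is (up to the scalar $\pm 1$) the unique element $\xi$ of $C[0,1]$ with $\|\xi\| = 1$ such that $\|\xi\| = \|\xi - q\cdot\xi\| + \|q\cdot\xi\|$ fails to distinguish it — more usefully, $1$ is characterized as the (unique up to sign) norm-one element $\xi$ such that for every $g \in C[0,1]$ we have $\max(\|g+\xi\|, \|g-\xi\|) = \|g\| + 1$; equivalently, $\xi$ is a norm-one element for which $\|\xi + g\| + \|\xi - g\| = 2\max(\|g\|,1) \cdot(\dots)$. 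Since verifying a statement of the form ``for all special points $v$, (norm inequality)'' is a $\Pi^0_1$ condition (norms of rational combinations of special points are computable uniformly), the set of indices $i$ of special points $v_i$ that are within $2^{-n}$ of a norm-one multiple of the constant function will be $\mathbf{0'}$-decidable. First I would fix such an intrinsic, $\Pi^0_1$-checkable characterization $\Phi(\xi)$ of ``$\xi = \pm 1$''; the cleanest candidate uses that $1$ is an \emph{extreme point of largest norm in every direction}, i.e.\ $\|1 + g\| \cdot \|1 - g\|$ or a sup-type identity. I will also need to pin down the sign, which is easy: among $\xi$ and $-\xi$, exactly one has a positive special point arbitrarily close to a positive scalar multiple, or one simply fixes the sign by a single additional $\mathbf{0'}$-query comparing $\|\,\xi + v_0\|$ with $\|\,\xi - v_0\|$ for a fixed special point $v_0$.

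Next I would run the standard $\mathbf{0'}$ approximation loop. At stage $n$, using $\mathbf{0'}$ as an oracle, I search for an index $i$ and a rational $q$ such that the special point $q\cdot v_i$ passes the approximate version of $\Phi$ to tolerance $2^{-n-1}$ and has the correct sign; such $i,q$ exist because the special points are dense and $1$ (hence a small perturbation of it) satisfies $\Phi$. Because $\Phi$ is $\Pi^0_1$, its $2^{-n-1}$-approximate version is $\mathbf{0'}$-decidable, so this search terminates. I set $x_n := q\cdot v_i$. Standard bookkeeping (shrinking tolerances, or passing to a subsequence) guarantees $d_{sup}(x_n, 1) < 2^{-n}$, so $(x_n)_{n\in\mathbb{N}}$ is a quick Cauchy sequence for $1$ computable from $\mathbf{0'}$, and each $x_n$ is an $A$-rational point by construction.

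The main obstacle is choosing the intrinsic characterization $\Phi$ correctly and proving it is genuinely $\Pi^0_1$ and robust under $2^{-n}$-approximation. The subtlety is that ``$\xi$ is an extreme point'' is naively a $\Pi^0_2$ or worse statement, so one must instead use a \emph{quantitative} metric identity that holds with equality only for $\pm 1$ and degrades continuously, e.g.\ something like: $\xi$ has norm $1$ and for all special $v$, $\|\,\xi + v\,\| = 1 + \|v\|$ or $\|\,\xi - v\,\| = 1 + \|v\|$ — but phrased so that the disjunction is closed, perhaps via $\max(\|\xi+v\|,\|\xi-v\|) \ge 1 + \|v\| - \varepsilon$ for the approximate form. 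I would verify that in $C[0,1]$ this forces $\xi$ to attain value $\pm 1$ everywhere it matters and to have constant sign, hence $\xi = \pm 1$; the reverse implication is immediate. Once $\Phi$ is correctly set up and shown to be a $\Pi^0_1$ condition on indices of special points that is stable under small perturbations, the rest is the routine $\mathbf{0'}$-search argument sketched above.
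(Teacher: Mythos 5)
Your plan takes a genuinely different route from the paper. The paper does \emph{not} characterize the constant $1$ intrinsically; instead it fixes a non-effectively chosen $A$-rational anchor $q_0$ with $d(1,q_0)<\tfrac12$ and $q_0<1$ pointwise, and then, for each $\varepsilon<\tfrac12$, characterizes the one-sided band $\{f : 1-\varepsilon\le f(x)<1 \ \forall x\}$ by three conditions: $d(f,q_0)<\tfrac12$ (a $\Sigma^0_1$ condition), $d(f,0)<1$ (a $\Sigma^0_1$ condition), and a $\Pi^0_1$ ``no room to grow'' condition saying that no $A$-rational $p$ with $d(0,p)>\varepsilon$ and $d(f,p)<1$ can keep $d(0,f+p)<1$. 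This is a $\Delta^0_2$ description, so $\mathbf{0'}$ can enumerate it and build the quick Cauchy sequence; the sign ambiguity is resolved automatically by the anchor $q_0$. You instead propose to characterize $\pm 1$ by an extreme-point-flavored norm identity such as $\max(\|\xi+g\|,\|\xi-g\|)=\|g\|+1$ for all $g$, and then $\mathbf{0'}$-search for approximate witnesses.

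Your route can in fact be made to work, and it is a reasonable alternative: one can check that if $\|\xi\|\le 1+\varepsilon$ and $\max(\|\xi+v\|,\|\xi-v\|)\ge\|v\|+1-\varepsilon$ for all special $v$, then $|\xi(x)|\ge 1-2\varepsilon$ for every $x$ (take $v$ to be a tall narrow tooth supported where $|\xi|$ is smallest), so $\xi$ is within $O(\varepsilon)$ of one of $\pm 1$ and has constant sign once $\varepsilon<\tfrac13$. But as written the proposal stops short of a proof precisely at the step that carries all the mathematical content. You float several candidate characterizations without committing (the first, the $\|\xi-q\xi\|+\|q\xi\|$ identity, holds for every $\xi$ with $0\le q\le 1$ and is correctly abandoned; the third is left as a fragment), you explicitly label the verification of $\Phi$ and its quantitative stability under $2^{-n}$-perturbation as ``the main obstacle'' without resolving it, and the sign-fixing via a ``fixed special point $v_0$'' only works if $v_0$ is chosen non-effectively to lie near $1$ — which is exactly the same non-effective anchoring the paper makes explicit with $q_0$, but your write-up leaves this circularity unaddressed. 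So the plan is sound and genuinely different from the paper's, but what you have is an outline: the choice of $\Phi$, the two-sided stability estimate, and the sign-anchoring all need to be nailed down before this becomes a proof.
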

Note that as $f \mapsto -f$ is an auto-isometry of $C[0, 1]$, the decision that such a sequence converges to $1$ instead of $-1$ is arbitrary.

\begin{proof}
Fix an $A$-rational point $q_0$ such that $d(1, q_0) < \frac12$ and $q_0(x) < 1$ for all $x$.  We claim that for every~$\varepsilon$ with $0<\varepsilon < \frac12$, the functions $f$ such that $1 - \varepsilon \le f(x) < 1$ for all $x$ are characterized by:
\begin{enumerate}
\item $d(f, q_0) < \frac12$;
\item $d(f, 0) < 1$; and
\item For every $A$-rational point $p$ with $d(0, p) > \varepsilon$ and $d(f, p) < 1$, $d(0, f+p) \ge 1$.
\end{enumerate}

To verify this, we begin by supposing that $f$ is such that $1-\varepsilon \le f(x) < 1$ for all $x$.  Then clearly $(2)$ holds for $f$.  Since $f(x) > \frac12$ for all $x$, $d(f, q_0) < \frac12$, and $(1)$ holds.  Finally, suppose $p_0 \in C[0,1]$ is such that $d(0, p_0) > \varepsilon$ and $d(f, p_0) < 1$.  Then for all $x$, $p_0(x) > -\varepsilon$, and so for some $z$, $p_0(z) > \varepsilon$.  So $f(z) + p_0(z) > 1$, and $(3)$ holds.

Conversely, suppose $f$ is not such that $1-\varepsilon \le f(x) < 1$ for all $x$.  If there is some $z$ with $f(z) \ge 1$, then $(2)$ fails.  If there is some $z$ with $f(z) \le 0$, then $(1)$ fails.  If neither of these occur, then $0 < f(x) < 1$ for all $x$ and there is some $z$ with $f(z) < 1-\varepsilon$.  Fix $\delta > 0$ such that $f(x) < 1- \delta$ for all $x$ and $f(z) < 1-\varepsilon - \delta$.  Then there is some $A$-rational point $p$ with $1-f(x) - \delta < p(x) < 1-f(x)$ for all $x$.  Since $p(z) > \varepsilon$, we have that $d(0, p) > \varepsilon$.  As $p(x) > 1- f(x) - \delta > 0$, we see that $d(f, p) < 1$.  Therefore, $f(x) + p(x) < 1$ for all $x$, we have $d(0, f+p) < 1$, and $(3)$ fails.

Observe that $(1)$ and $(2)$ are $\Sigma^0_1$ conditions, while $(3)$ is a $\Pi^0_1$ condition.  Thus $\mathbf{0'}$ can uniformly compute the set of $A$-rational points $x$ with $1-\varepsilon \le f(x) < 1$ and so construct a quick Cauchy sequence converging to the constant function $1$.
\end{proof}

\begin{corollary}
If $(A, d, +, (r\cdot)_{r \in \mathbb{Q}})$ is a computable presentation of $(C[0,1], d_{sup},+, (r\cdot)_{r \in \mathbb{Q}})$, then the collection $f \in A$ such that $f(x) > 0$ for all $x$ is a $\Sigma^0_1(\mathbf{0'})$-class.
\end{corollary}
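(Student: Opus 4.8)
The plan is to deduce the corollary straight from the preceding lemma with essentially no extra work. That lemma supplies a $\mathbf{0'}$-computable quick Cauchy sequence for the constant function $1$; applying the (computable) scalar multiplication $r\cdot$ termwise, I get, uniformly in a rational $c$, a $\mathbf{0'}$-computable quick Cauchy sequence for the constant function $c$ inside the completion. Since the metric $d$ extends computably to the completion, this means that for $f\in A$ and rational $c$ the quantity $d(f,c)$ is a $\mathbf{0'}$-computable real uniformly in $f$ and $c$, so the relation ``$d(f,c) < c-\varepsilon$'' is $\Sigma^0_1(\mathbf{0'})$ uniformly in $f\in A$ and rationals $c,\varepsilon$. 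Note this is carried out purely inside the presentation, with no reference to the isometric isomorphism.

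The heart of the argument is then the elementary characterization
\[
f(x) > 0 \text{ for all } x\in[0,1] \iff \big(\exists\ \text{rationals } \varepsilon>0,\ c\big)\ \ d(f,c) < c-\varepsilon .
\]
For the implication from right to left, if $d(f,c) < c-\varepsilon$ then $|f(x)-c|<c-\varepsilon$ for every $x$, hence $f(x) > \varepsilon > 0$. For the converse I would invoke compactness: if $f>0$ everywhere on $[0,1]$, then $\mu:=\min_x f(x) > 0$ and $M:=\max_x f(x) = d(f,0)$ are attained; choosing a rational $c>M$ makes $|f(x)-c| = c-f(x)$ for all $x$, so $d(f,c) = c-\mu$, and then any rational $\varepsilon\in(0,\mu)$ witnesses the right-hand side. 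Given the characterization, the right-hand side is an existential quantifier over rationals in front of a $\Sigma^0_1(\mathbf{0'})$ matrix, hence $\Sigma^0_1(\mathbf{0'})$, which is exactly the assertion of the corollary.

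I do not anticipate a real obstacle; the only place calling for minor care is the first paragraph, namely checking that distances to the (merely $\mathbf{0'}$-computable, not special) constant points remain $\mathbf{0'}$-computable uniformly --- but this is immediate once one observes that a quick Cauchy sequence $(c_i)$ for $c$ yields $|d(f,c)-d(f,c_i)| < 2^{-i}$, so $d(f,c)$ is approximable to precision $2^{-i}$ at stage $i$ using $\mathbf{0'}$. Everything else is routine bookkeeping with the quantifier hierarchy.
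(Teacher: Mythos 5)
Your proof is correct and takes essentially the same approach as the paper: both reduce to the preceding lemma's $\mathbf{0'}$-computable constant function $1$ and then express everywhere-positivity as a strict inequality between sup-distances, which is $\Sigma^0_1(\mathbf{0'})$. The paper scales $f$ (finding a rational $q>0$ with $d(0,q\cdot f)<1$ and $d(0,1-q\cdot f)<1$) whereas you shift the constant (finding rationals $c,\varepsilon$ with $d(f,c)<c-\varepsilon$), but these are cosmetic variants of the same characterization.
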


\begin{proof}
$f$ is everywhere positive if and only if for some (and indeed every) $q \in \mathbb{Q}_{>0}$ with $d(0, q\cdot f) < 1$, we can see that $d(0, 1-q\cdot f) < 1$.
\end{proof}

For $f \in C[0, 1]$, define $f^+(x) = \max\{ f(x), 0\}$ for all $x$, and define $f^- = (-f)^+$.  Thus $f = f^+ - f^-$. Now we can determine the difficulty of finding $f^+$ and $f^-$.

\begin{lemma}
If $(A, d, +, (r\cdot)_{r \in \mathbb{Q}})$ is a computable presentation of $(C[0,1], d_{sup},+, (r\cdot)_{r \in \mathbb{Q}})$, then $f \mapsto f^+$ and $f \mapsto f^-$ are $\Delta^0_{1}(\mathbf{0''})$ functions.
\end{lemma}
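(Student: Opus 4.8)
The plan is the following. Since $f^- = (-f)^+$ and $f\mapsto -f$ is computable (it is multiplication by the rational $-1$), it suffices to treat $f\mapsto f^+$. Because $t\mapsto t^+$ is $1$-Lipschitz on $\mathbb R$, we have $\|g^+ - f^+\|_{sup}\le \|g-f\|_{sup}$ for all $f,g\in C[0,1]$, so the claim reduces to the statement that \emph{uniformly in an $A$-rational point $g$ one can $\mathbf{0''}$-compute a quick Cauchy sequence of $A$-rational points converging to $g^+$}. Granting this, a general $f$ is handled as follows: from a name $(q_k)_k$ for $f$ (so $d(q_k,f)<2^{-k}$) run the subroutine with $g=q_{n+1}$, extract an $A$-rational $p$ with $d(p,q_{n+1}^+)<2^{-n-1}$, and output it; then $d(p,f^+)\le d(p,q_{n+1}^+)+d(q_{n+1}^+,f^+)<2^{-n}$, and the whole procedure uses only (a name for) $f$ together with $\mathbf{0''}$. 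So from now on fix an $A$-rational point $g$.

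First I would record the order-theoretic fact that $g^+$ is the least upper bound of $\{g,0\}$ in the Banach lattice $C[0,1]$; equivalently, for any $A$-rational point $h$,
\[
h\ge g^+\ \Longleftrightarrow\ (h-g\ge 0)\wedge(h\ge 0).
\]
By the Corollary above, relativized to $\mathbf{0'}$ and applied via the $\mathbf{0'}$-computable constant function $\mathbf 1$ supplied by the preceding Lemma to points of the form $z-c\mathbf 1$, the infimum $\inf_x z(x)$ of any $\mathbf{0'}$-computable element $z$ of $C[0,1]$ is a $\mathbf{0'}$-computable real, uniformly; in particular ``$z\ge 0$'' is a $\Pi^0_1(\mathbf{0'})$ property, hence $\mathbf{0''}$-decidable. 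Consequently the set $U_g=\{\,h\text{ an $A$-rational point}:h-g\ge 0\text{ and }h\ge 0\,\}$ of $A$-rational upper bounds of $g^+$ is uniformly $\mathbf{0''}$-decidable, $g^+=\inf U_g$ pointwise, and (by density of the $A$-rational points and closedness of the positive cone) for every $\delta>0$ there is $h\in U_g$ with $\|h-g^+\|_{sup}<\delta$. The same argument shows $\mathbf{0''}$ computes $\inf_x z(x)$ and $\sup_x z(x)$ for every $A$-rational $z$.

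To produce the desired sequence it then suffices, for each $n$, to $\mathbf{0''}$-search for an $A$-rational point $p$ such that \textbf{(i)} $p+2^{-n}\mathbf 1\in U_g$ and \textbf{(ii)} $\inf_x\bigl(g^+(x)-p(x)\bigr)>-2^{-n}$. Condition (i) is $\mathbf{0''}$-decidable by the previous paragraph and forces $g^+\le p+2^{-n}\mathbf 1$, i.e. $\sup_x(g^+(x)-p(x))\le 2^{-n}$. For (ii), note $g^+(x)-p(x)=\max\bigl((g-p)(x),-p(x)\bigr)$, so the quantity there is $\inf_x\max(a(x),b(x))$ with $a=g-p$ and $b=-p$ two $A$-rational points; if this infimum is a $\mathbf{0''}$-computable real (uniformly in the $A$-rational data), then ``$>-2^{-n}$'' is $\Sigma^0_1(\mathbf{0''})$, the search succeeds because some $p$ satisfies (i) and (ii) — any $A$-rational $p$ with $\|p-g^+\|_{sup}<2^{-n-1}$ does — and for the $p$ it returns, (i) and (ii) together give $\|p-g^+\|_{sup}\le 2^{-n}$ (a harmless reindexing then yields a genuinely quick Cauchy sequence).

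Thus everything comes down to this: \emph{for $A$-rational points $a,b$, the real $\inf_x\max(a(x),b(x))$ (equivalently, $g\mapsto g^+$ itself for $A$-rational $g$) is $\mathbf{0''}$-computable, uniformly.} This is the crux and the main obstacle. The ``upper'' order conditions reduce painlessly to the positive cone via the Corollary, but computing the lattice operations — equivalently, recognizing when two positivity sets $\{a>0\}$ and $\{b>0\}$ meet — does not, and a naive pointwise unwinding only gives a $\Pi^0_1(\mathbf{0''})$ (i.e. $\Pi^0_3$) bound, which is too weak. The plan for this step is to approximate $g^+$ monotonically from above by recognizable members of $U_g$, using that $\mathbf{0''}$ computes the suprema and infima of $A$-rational functions to control the approximation and that strict positivity is $\Sigma^0_1(\mathbf{0'})$ to certify closeness; this is where the real work of the lemma lies, and it is an alternative route to the corresponding result in Brown~\cite{BrownThesis}.
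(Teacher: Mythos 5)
You have correctly isolated the crux of the lemma, and your preliminary reductions are sound: the reduction to $A$-rational $g$ via the Lipschitz bound $\|g^+-f^+\|_{\sup}\le\|g-f\|_{\sup}$ is clean (and simpler than what the paper does, which keeps a general $f$ in play via a $\Sigma^0_1(f)$ approximation condition), and the identification of $g^+$ as the least element of the $\mathbf{0''}$-decidable upper-bound set $U_g$ is correct. But the proof is not complete: you reduce the problem to computing $\inf_x\max(a(x),b(x))$ for $A$-rational $a,b$, acknowledge that this is ``the crux and the main obstacle,'' and then only sketch a plan (``approximate monotonically from above \ldots certify closeness'') that is circular as stated, since certifying that $h\in U_g$ is within $\varepsilon$ of $g^+$ in sup-norm is exactly the lattice computation you are trying to establish. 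A $\Pi^0_1(\mathbf{0'})$ (i.e.\ $\Pi^0_3$) bound on that certification, which is what the naive unwinding gives, is too weak.

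The idea you are missing is how the paper turns this ``closeness'' certification into a $\Pi^0_1(\mathbf{0'})$ condition that makes no reference to $g^+$ or to any lattice operation. The paper considers pairs $(g,h)$ of $A$-rational points with $g,h$ everywhere positive and $d(f,g-h)<\varepsilon$, and then imposes the key third condition: \emph{there is no $A$-rational point $p$ with $d(0,p)>\varepsilon$ such that $p$, $g-p$, and $h-p$ are all everywhere positive.} Intuitively this says $\min(g,h)$ has small sup, hence $g\approx f^+$ and $h\approx f^-$ (since at any $x$ at least one of $f^+(x),f^-(x)$ vanishes); the crucial point is that this is expressed as nonexistence of a counter-witness $p$, and ``everywhere positive'' is $\Sigma^0_1(\mathbf{0'})$, so the whole condition is $\Pi^0_1(\mathbf{0'})$ and decidable from $\mathbf{0''}$. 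One then verifies by a density/openness argument (exhibiting an $A$-rational $p$ in the nonempty open set $\{p:\forall x\ 0<p(x)<\min(g(x),h(x))\}\cap\{p:p(z)>\varepsilon\}$ when closeness fails) that any $(g,h)$ passing all three tests satisfies $d(f^+,g)\le 2\varepsilon$ and $d(f^-,h)\le 2\varepsilon$, and that such pairs exist. This is precisely the non-circular closeness certificate your plan needs, and without it the argument does not close.
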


\begin{proof}
For a function $f$ and $\varepsilon > 0$, consider pairs $(g, h)$ satisfying the following three conditions:
\begin{enumerate}
\item For all $x$, $g(x) > 0$ and $h(x) > 0$;
\item $d(f, g-h) < \varepsilon$;
\item There does not exist an $A$-rational point $p$ with $d(0, p) > \varepsilon$, and $p$, $g-p$ and $h-p$ are all everywhere positive.
\end{enumerate}

First, we claim that there is a pair of $A$-rational points satisfying this.  Let $g$ and $h$ be any $A$-rational points with $f^+(x) < g(x) < f^+(x) + \varepsilon$ and $f^-(x) < h(x) < f^-(x) + \varepsilon$ for all $x$.  Then $g$ and $h$ are everywhere positive, so $(1)$ is satisfied.  Furthermore, $f(x) - \varepsilon < g(x) - h(x) < f(x) + \varepsilon$ for all $x$, and so $(2)$ is satisfied.  Finally, any $p$ such that $p$, $g-p$ and $h-p$ are everywhere positive would have to satisfy $0 < p(x) < \min\{ g(x), h(x)\} < \varepsilon$ for all $x$, and so $d(0, p) < \varepsilon$.

Next, we claim that for any pair $(g,h)$ satisfying these conditions, $d(f^+,g) \le 2\varepsilon$ and ${d(f^-, h) \le 2\varepsilon}$.  Suppose $(g, h)$ satisfies conditions $(1)$ and $(2)$ but there is some $z$ with $g(z) - f^+(z) > 2\varepsilon$.  Since ${d(f, g-h) < \varepsilon}$, it follows that $h(z) > \varepsilon$.  Then
\[
\{ p \st  (\forall x)\, 0 < p(x) < \min(g(x), h(x))\} \cap \{ p \st \varepsilon < p(z)\}
\]
is a nonempty open neighborhood and so contains some $A$-rational point $p$, contradicting condition $(3)$.

Suppose instead that $(g,h)$ satisfies conditions $(1)$ and $(2)$ but there is some $z$ with $f^+(z) - g(z) > 2\varepsilon$.  Since $d(f,g-h) < \varepsilon$, it follows that $h(z) < 0$, contrary to $(1)$.

The remaining cases ($h(z) - f^-(z) > 2\varepsilon$ and $f^-(z) - h(z) > 2\varepsilon$) are symmetric.  It follows that for any pair $(g,h)$ satisfying these conditions, $d(f^+, g), d(f^-, h) \le 2\varepsilon$.

Since $1$ is a $\Delta^0_2$ point, conditions $(1)$ and (3) are $\Sigma^0_1(\mathbf{0'})$ conditions on rational points (with no mention of $f$), while condition $(2)$ is a $\Sigma^0_1(f)$ condition.  Thus $f\oplus \emptyset''$ can enumerate these pairs and create a sequence quickly converging to $f^+$, and the map $f \mapsto f^+$ is computable with oracle $\mathbf{0''}$.  The same argument holds for $f\mapsto f^-$.
\end{proof}

\begin{corollary}
If $(A, d, +, (r\cdot)_{r \in \mathbb{Q}})$ is a computable presentation of $(C[0,1], d_{sup},+, (r\cdot)_{r \in \mathbb{Q}})$, then the modulus function, denoted $\text{mod}(f)$ and given by $\mathrm{mod}(f)(x) = |f(x)|$, and the max and min functions, given by $\max(f,g)(x) = \max\{f(x), g(x)\}$ and $\min(f,g)(x) = \min\{f(x), g(x)\}$, are $\Delta^0_1(\mathbf{0''})$.
\end{corollary}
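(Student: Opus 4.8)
The plan is to express each of the three functions in terms of the operations $f\mapsto f^+$ and $f\mapsto f^-$, which the preceding lemma shows to be $\Delta^0_1(\mathbf{0''})$, together with the folklore fact that the vector space operations act computably on $X$-computable elements of $C[0,1]$ for any oracle $X$.

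First I would record the relevant pointwise identities, all of which are immediate. For every $x$,
\[
|f(x)| = \max\{f(x),0\} + \max\{-f(x),0\} = f^+(x) + f^-(x),
\]
so $\mathrm{mod}(f) = f^+ + f^-$. Next, $(f-g)^+(x) = \max\{f(x)-g(x),0\}$, and hence $g(x) + (f-g)^+(x) = \max\{f(x),g(x)\}$, i.e.\ $\max(f,g) = g + (f-g)^+$; symmetrically $\min(f,g) = g - (f-g)^-$ (equivalently $\min(f,g) = -\max(-f,-g)$).

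It then suffices to chase the complexity. Given an oracle computing $f$ (resp.\ $f$ and $g$), the difference $f-g$ is computable from $f\oplus g$, so by the preceding lemma $(f-g)^+$ and $(f-g)^-$ are computable from $(f\oplus g)\oplus\mathbf{0''}$; adding back $g$ is a computable operation, so $\max(f,g)$ and $\min(f,g)$ remain computable from $(f\oplus g)\oplus\mathbf{0''}$, uniformly. The same reasoning gives $\mathrm{mod}(f) = f^+ + f^-$ computably from $f\oplus\mathbf{0''}$. Hence $\mathrm{mod}$, $\max$, and $\min$ are $\Delta^0_1(\mathbf{0''})$. There is no real obstacle here; the only point to keep in mind is that ``$\Delta^0_1(\mathbf{0''})$ function'' is to be read as ``computable with oracle $\mathbf{0''}$, uniformly in an oracle for the input function'', so that composing the $\mathbf{0''}$-computable map $\cdot^+$ with the genuinely computable maps $(f,g)\mapsto f-g$ and $(h,g)\mapsto h+g$ again yields a $\mathbf{0''}$-computable map, by transitivity of relative computability.
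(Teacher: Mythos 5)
Your proof is correct and follows essentially the same route as the paper: both reduce $\mathrm{mod}$, $\max$, and $\min$ to the preceding lemma on $f\mapsto f^+$ and $f\mapsto f^-$ via elementary pointwise identities. The paper uses $\max(f,g)=\tfrac12[f+g+\mathrm{mod}(f-g)]$ while you use $\max(f,g)=g+(f-g)^+$ (and symmetrically for $\min$), but these are cosmetically different decompositions of the same idea, and your complexity-chasing remark about composing a $\mathbf{0''}$-computable map with genuinely computable maps is exactly the justification the paper leaves implicit.
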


\begin{proof}
By defining
\[\begin{array}{rcl}
\text{mod}(f) & = & f^+ + f^- \\
\max(f,g) & = & \frac12[f + g + \text{mod}(f-g)] \\
\min(f,g) & = & \frac12[f+g-\text{mod}(f-g)],
\end{array}\]
the claim is immediate.
\end{proof}

For rationals $0 \le p < q \le 1$ and $r > 0$, we define $t_{p,q,r}$ to be the tooth function which is 0 for $x \le p$ or~$x \ge q$ and which increases linearly to height $r$ midway between $p$ and $q$.
The key property of these functions is as follows.

\begin{claim}
The rational linear combinations of the tooth functions $t_{p,q,r}$ are dense in $C[0,1]$.
\end{claim}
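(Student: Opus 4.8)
The plan is to combine the classical density of piecewise-linear functions in $C[0,1]$ with a direct linear-algebra description of the rational span of the teeth. First, since $t_{p,q,r}=r\cdot t_{p,q,1}$ and $r$ ranges over the positive rationals, the rational linear span of $\{t_{p,q,r}\}$ coincides with the rational linear span of $\mathcal{T}:=\{t_{p,q,1}\;:\;p,q\in\mathbb{Q}\cap[0,1],\ p<q\}$, so it suffices to show that $\operatorname{span}_{\mathbb{Q}}\mathcal{T}$ is dense in $C[0,1]$.

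Next I would prove the standard piecewise-linear approximation step with rational data. Given $f\in C[0,1]$ and $\varepsilon>0$, use uniform continuity of $f$ to pick rationals $0=a_0<a_1<\dots<a_m=1$ such that $f$ oscillates by less than $\varepsilon/3$ on each interval $[a_{i-1},a_i]$; the piecewise-linear interpolant $L$ with $L(a_i)=f(a_i)$ then satisfies $\|f-L\|_\infty\le\varepsilon/3$. Replacing each value $f(a_i)$ by a sufficiently close rational produces a continuous piecewise-linear $g$ with breakpoints among $\{a_0,\dots,a_m\}$ and $g(a_i)\in\mathbb{Q}$, with $\|L-g\|_\infty<\varepsilon/3$, hence $\|f-g\|_\infty<\varepsilon$. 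It then remains to place $g$ inside $\operatorname{span}_{\mathbb{Q}}\mathcal{T}$.

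The heart of the argument is identifying $\operatorname{span}_{\mathbb{Q}}\mathcal{T}$ with a concrete space of piecewise-linear functions. I would expand $g$ in the nodal basis of hat functions $H_i$ of the grid $a_0<\dots<a_m$ (with $H_i(a_j)=\delta_{ij}$), so $g=\sum_i g(a_i)H_i$ with rational coefficients, and then show that each relevant $H_i$ lies in $\operatorname{span}_{\mathbb{Q}}\mathcal{T}$. The peak of a tooth $t_{p,q,1}$ is forced to the midpoint of $[p,q]$, so a single tooth realizes only a "centered" hat; an off-center hat $H_i$ supported on $[a_{i-1},a_{i+1}]$ is assembled as a short rational telescoping combination of teeth sharing the breakpoint $a_{i-1}$ together with teeth sharing $a_{i+1}$. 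Cleanly, one proves by induction on the number of breakpoints that $\operatorname{span}_{\mathbb{Q}}\mathcal{T}$ contains every piecewise-linear function on $[0,1]$ with rational breakpoints that vanishes outside a subinterval strictly inside $(0,1)$, using a single well-chosen tooth at each step to cancel the outermost interior breakpoint and keeping all coefficients rational. Feeding this back into the previous paragraph yields $g\in\operatorname{span}_{\mathbb{Q}}\mathcal{T}$, and hence density of $\operatorname{span}_{\mathbb{Q}}\mathcal{T}$ in $C[0,1]$.

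The main obstacle is exactly this last step: expressing an arbitrary piecewise-linear function as a finite \emph{rational} combination of teeth. Two sources of friction must be handled carefully: (i) the forced midpoint location of a tooth's peak, which means off-center hats have to be built from several teeth with matched rational slopes; and (ii) the constraint $q\le 1$, which makes teeth near the right endpoint of $[0,1]$ rigid, so that the nodal behaviour at $a_0=0$ and $a_m=1$ needs the extra bookkeeping indicated above rather than a single tooth. Carrying out the induction with explicit rational coefficients, and doing so uniformly near the endpoints, is the one genuinely computational (though elementary) part of the proof; everything else reduces to uniform continuity and linear algebra over $\mathbb{Q}$.
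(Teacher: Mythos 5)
Your piecewise-linear approximation plus nodal-basis-assembly approach is more elementary and constructive than the paper's, which simply invokes the lattice form of the Stone--Weierstrass theorem. The problem is that the endpoint ``friction'' you flag is not a bookkeeping nuisance but a genuine obstruction. With $0\le p<q\le 1$, every tooth $t_{p,q,r}$ vanishes at both endpoints of $[0,1]$: $t_{p,q,r}(0)=0$ because $0\le p$, and $t_{p,q,r}(1)=0$ because $q\le 1$. Every finite rational (indeed real) linear combination of teeth therefore also vanishes at $0$ and $1$, so $\operatorname{span}_{\mathbb{Q}}\mathcal{T}$ lies inside the proper closed subspace $\{f\in C[0,1]:f(0)=f(1)=0\}$ and cannot be dense in $C[0,1]$. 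Your induction correctly shows what it can show --- that this span contains the rational piecewise-linear functions vanishing outside a subinterval strictly inside $(0,1)$ --- but the subsequent sentence, ``feeding this back yields $g\in\operatorname{span}_{\mathbb{Q}}\mathcal{T}$,'' does not follow, since the approximant $g$ of an arbitrary $f$ is typically nonzero at the endpoints and therefore cannot lie in that span.

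To close the gap you must enlarge the generating set so that nonzero endpoint values become accessible: either relax the constraint to allow $p<0$ and $q>1$ (so that, restricted to $[0,1]$, some teeth are nonzero at $0$ or at $1$), or adjoin the constant function $1$ and the identity $\text{id}$ to $\mathcal{T}$. Either modification makes the two endpoint hats in your nodal basis realizable and your induction then closes; the rest of your argument is fine. Note that, literally as written, the paper's claim shares the same issue --- the lattice Stone--Weierstrass theorem requires a lattice that can match arbitrary value pairs at any two points, and the teeth alone cannot do this at the pair $\{0,1\}$ --- so in both cases the statement should be read with these auxiliary generators adjoined, which is harmless for the surrounding construction since $1$ and $\text{id}$ are already available there.
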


\begin{proof}
This is a version of the Stone-Weierstrass Theorem; see, e.g., Theorem 7.29 of \cite{Hew}.
\end{proof}

If using ${\bf 0}^{(n)}$ (for a sufficiently large $n$) we could uniformly map every tooth function in the `natural' computable presentation of $C[0,1]$ into its isomorphic image in $A$, we would be done.
This is because  every function $f \in C[0,1]$ has an $f$-computable fast converging Cauchy name consisting of finite linear combinations of the tooth functions.
To calculate the image of $f$ in $A$, simply map these sums into the respective sums. 
If we succeed  in finding such an $n$, then saying that $A \cong C[0,1]$ would be equivalent to saying that there is a ${\bf 0}^{(n)}$-computable linear isometric isomorphism between $A$ and $C[0,1]$, which is easily seen to be an arithmetical statement. 

To this end, our task is to arithmetically find  suitable images for $t_{p,q,r}$ in $A$.
The lemma below is the importnat first step.

\begin{lemma}
Let $\text{id}$ denote the identity function in $A$. If $(A, d, +, (r\cdot)_{r \in \mathbb{Q}})$ is a computable presentation of $(C[0,1], d_{sup},+, (r\cdot)_{r \in \mathbb{Q}})$, the tooth functions $t_{p,q,r}$ are uniformly $\Delta^0_1(\mathbf{0''}\oplus \text{id})$.
\end{lemma}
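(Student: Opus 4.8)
The plan is to build each tooth function $t_{p,q,r}$ from a strictly monotone "ramp" function together with the $\max$ and $\min$ operations, using only the auxiliary functions whose complexity has already been pinned down. First I would observe that $t_{p,q,r}$ is a finite combination (via $\max$, $\min$, scalar multiplication, addition, and the identity function $\mathrm{id}$) of affine functions: setting $m = \tfrac{p+q}{2}$, the two sides of the tooth are the lines $L_1(x) = \tfrac{2r}{q-p}(x-p)$ and $L_2(x) = \tfrac{2r}{q-p}(q-x)$, and one checks directly that
\[
t_{p,q,r}(x) = \max\bigl\{\, 0,\ \min\{L_1(x), L_2(x)\}\,\bigr\}.
\]
So it suffices to produce the affine functions $L_1$ and $L_2$ as $\Delta^0_1(\mathbf{0''}\oplus\mathrm{id})$ points of $A$, and then apply the previous corollary, which gives that $\max$ and $\min$ are $\Delta^0_1(\mathbf{0''})$ (hence $\Delta^0_1(\mathbf{0''}\oplus\mathrm{id})$), together with the fact that the ring operations $+$ and $(r\cdot)_{r\in\mathbb{Q}}$ are computable outright. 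Since $p, q, r$ are rational, all the coefficients appearing are rational.

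The one genuine ingredient is locating the identity function $\mathrm{id}$ (equivalently, some affine function of positive slope) inside the presentation $A$, which is exactly why the oracle $\mathrm{id}$ appears in the statement; here I would invoke precisely the remark flagged in the introduction about approximating a strictly monotone function in $C[0,1]$ using only finitely many jumps. Concretely: given $\mathrm{id}$ as an oracle we can $\mathrm{id}\oplus\mathbf{0''}$-compute a quick Cauchy sequence of $A$-rational points converging to $\mathrm{id}$ — either directly, because the presentation comes with enough effective data to recognize $A$-rationals close to a given $\mathbf{0''}$-computable target (the constant $1$ is already $\Delta^0_2$, so $\mathrm{id}$ is $\Delta^0_1(\mathbf{0''}\oplus\mathrm{id})$ when named as an oracle), or by using the characterization technique from the earlier lemmas adapted to pin down a strictly increasing function by $\Pi^0_1$-type conditions on $A$-rational test points. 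Once $\mathrm{id}$ is available as a $\Delta^0_1(\mathbf{0''}\oplus\mathrm{id})$ point, $L_1$ and $L_2$ are obtained by a single scalar multiplication and a translation by a rational multiple of the constant function $1$ (which is $\Delta^0_2$), so they are $\Delta^0_1(\mathbf{0''}\oplus\mathrm{id})$ as well, uniformly in the rational parameters $p,q,r$.

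Assembling the pieces: from $L_1, L_2 \in A$ (as $\Delta^0_1(\mathbf{0''}\oplus\mathrm{id})$ points) we form $\min\{L_1,L_2\}$ and then $\max\{0,\min\{L_1,L_2\}\}$ using the $\Delta^0_1(\mathbf{0''})$ max/min operators; since $0$ is a special point and the whole construction is uniform in $(p,q,r)$, the resulting family $(t_{p,q,r})$ is uniformly $\Delta^0_1(\mathbf{0''}\oplus\mathrm{id})$, as claimed. I expect the main obstacle to be the $\mathrm{id}$-location step: unlike the constant $1$, an element of positive slope is not canonically singled out by the norm alone, so one must either carefully set up the approximation so that $\mathrm{id}$-as-oracle genuinely yields a quick Cauchy sequence of $A$-rationals (this is the content of the footnoted gap that Alec Fox pointed out, now repaired by the monotone-approximation argument), or give an explicit $\mathbf{0''}$-definable characterization of "is close to a fixed strictly increasing affine function" in terms of $A$-rational witnesses. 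Everything after that is a routine finite combination of already-classified operations.
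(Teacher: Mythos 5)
Your formula $t_{p,q,r} = \max\bigl(0, \min(L_1, L_2)\bigr)$ is correct and algebraically equivalent to the one the paper uses, namely $t_{p,q,r} = \min\bigl(\max(L_1,0),\max(L_2,0)\bigr)$ with $L_1 = \frac{2r}{q-p}[\text{id} - p\cdot 1]$ and $L_2 = \frac{-2r}{q-p}[\text{id} - q\cdot 1]$. Up to that point you have the proof: $L_1$ and $L_2$ are obtained from $\text{id}$ and the $\Delta^0_2$ point $1$ by the computable scalar and additive operations, hence are $\Delta^0_1(\mathbf{0'}\oplus\text{id})$, and applying $\max$ and $\min$ (shown $\Delta^0_1(\mathbf{0''})$ in the preceding corollary) gives $\Delta^0_1(\mathbf{0''}\oplus\text{id})$, uniformly in $p,q,r$. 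That is the entire content of the paper's argument.

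After that, however, you introduce a problem that does not exist. You treat ``locating $\text{id}$ inside the presentation'' as the essential difficulty and invoke the strictly-monotone-approximation machinery to solve it. This is a misreading of the oracle. In the computable-Banach-space setting, having $\text{id}$ as (part of) the oracle means a fast Cauchy sequence of $A$-rational points converging to the identity is \emph{given} to the Turing functional; there is nothing to locate, and $\text{id}$ need not be $\mathbf{0''}$-computable from the presentation alone (indeed it typically is not). The genuine work of \emph{producing} a suitable strictly monotone function from scratch --- which is where the real difficulty lies, and where the gap Alec Fox flagged sat --- belongs to the later $\Pi^0_4$ lemma and the subsequent stages construction. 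The present lemma is precisely what that later construction consumes: it shows that once you have a name for some monotone function in hand, the tooth functions (and hence a Schauder basis) come for free via this one-line formula. Conflating the two makes the proof read as circular and much harder than it is.
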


\begin{proof}We simply express $t_{p,q,r}$ in terms of $\min$, $\max$, and $\text{id}$:
\[
t_{p,q,r} = \min\left( \max\left( \frac{2r}{q-p}[\text{id} - p\cdot 1], 0\right), \max\left( \frac{-2r}{q-p}[\text{id} - q\cdot 1], 0\right)\right).\qedhere
\]
\end{proof}

\begin{claim}
If $g$ is a homeomorphism of $[0,1]$ (i.e.\ a strictly increasing or strictly decreasing function in $C[0,1]$ with max 1 and min 0), then $f \mapsto f\circ g$ is an isometry of $C[0,1]$.
\end{claim}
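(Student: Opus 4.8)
The plan is to verify directly that the map $\Phi_g\colon f \mapsto f\circ g$ is a surjective linear isometry of $C[0,1]$. First I would record the basic facts about $g$ itself. Strict monotonicity makes $g$ injective, and continuity together with the hypotheses $\min_{x\in[0,1]} g(x) = 0$ and $\max_{x\in[0,1]} g(x) = 1$ forces, by the intermediate value theorem, that the range of $g$ is all of $[0,1]$. Hence $g$ is a bijection of $[0,1]$ onto itself, its inverse $g^{-1}$ exists, and $g^{-1}$ is again strictly monotonic in the same direction; continuity of $g^{-1}$ follows from the fact that a continuous bijection from a compact space to a Hausdorff space is a homeomorphism (or directly from monotonicity). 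So $g^{-1}$ is itself a homeomorphism of $[0,1]$ of the kind considered in the statement.

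Next, linearity of $\Phi_g$ is immediate: for $f_1,f_2 \in C[0,1]$ and $a,b \in \mathbb{R}$ we have $\bigl((af_1+bf_2)\circ g\bigr)(x) = a f_1(g(x)) + b f_2(g(x))$ for every $x$, so $\Phi_g(af_1+bf_2) = a\,\Phi_g(f_1) + b\,\Phi_g(f_2)$; and $f\circ g \in C[0,1]$ because a composition of continuous functions is continuous.

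For the isometry property I would compute, for any $f \in C[0,1]$,
\[
\|\Phi_g(f)\|_{\sup} = \sup_{x\in[0,1]} |f(g(x))| = \sup_{y\in[0,1]} |f(y)| = \|f\|_{\sup},
\]
where the middle equality uses that $\{g(x) : x \in [0,1]\} = [0,1]$, i.e.\ the surjectivity of $g$ established above. Applying this to $f_1 - f_2$ and invoking linearity gives $d_{\sup}(\Phi_g(f_1),\Phi_g(f_2)) = d_{\sup}(f_1,f_2)$. Finally, $\Phi_g$ is bijective: $\Phi_{g^{-1}}$ is a well-defined map $C[0,1]\to C[0,1]$ by the first paragraph, and since $g\circ g^{-1} = g^{-1}\circ g = \mathrm{id}_{[0,1]}$ we get $\Phi_g\circ\Phi_{g^{-1}} = \Phi_{g^{-1}}\circ\Phi_g = \mathrm{id}_{C[0,1]}$. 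Thus $\Phi_g$ is a surjective linear isometry of $C[0,1]$.

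There is no genuine obstacle here; the only point that requires a moment of care is the surjectivity of $g$ onto $[0,1]$, needed both for the supremum computation and to make sense of $g^{-1}$, and this is precisely where the hypotheses $\min g = 0$ and $\max g = 1$ enter.
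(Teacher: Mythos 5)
Your proof is correct, and you should note one structural point: the paper does not actually prove this claim. It simply cites Dunford--Schwartz (Theorem IV.6.26, a form of the Banach--Stone theorem), which characterizes the surjective linear isometries of $C(K)$ for compact Hausdorff $K$. You instead verify the claim directly and elementarily: strict monotonicity plus the boundary conditions $\min g = 0$, $\max g = 1$ give, via the intermediate value theorem, that $g$ is a self-bijection of $[0,1]$ with continuous inverse; linearity of $\Phi_g \colon f \mapsto f\circ g$ is a one-line computation; norm-preservation follows from the change of variables $y = g(x)$ in the supremum, which is legitimate precisely because $g$ is onto; and surjectivity of $\Phi_g$ comes from exhibiting $\Phi_{g^{-1}}$ as a two-sided inverse. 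What the citation buys is economy and the embedding of this fact into the broader Banach--Stone duality (which the paper remarks on immediately afterward); what your direct argument buys is a self-contained, elementary verification that makes clear exactly which hypotheses (surjectivity of $g$, hence the assumptions on $\min$ and $\max$) are doing the work. Both are sound; your route is arguably preferable in a fully self-contained exposition.
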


The proof of this claim can be found in, e.g., Dunford-Schwartz \cite{DanSh1} vol. 1, Th. IV.6.26.
In fact, this gives a complete description of the automorphism group of $C[0,1]$ in terms of self-homeomorphisms of $[0,1]$. This fact can be generalised to arbitrary compact domains,
known as the Banach-Stone duality in the literature.

\medskip

For $g \in C[0,1]$, define
\[
t_{p,q,r}(g) = \min\left( \max\left( \frac{2r}{q-p}[g - p\cdot 1], 0\right), \max\left( \frac{-2r}{q-p}[g - q\cdot 1], 0\right)\right).
\]

Note that the maps $g \mapsto t_{p,q,r}(g)$ are uniformly computable with oracle $\bf{0^{(2)}}$.

\begin{corollary}
If $g$ is a homeomorphism of $[0,1]$, then the rational linear combinations of the $t_{p,q,r}(g)$ are dense in $C[0,1]$.  Further, $t_{p,q,r}(\text{id}) \mapsto t_{p,q,r}(g)$ induces an isometry.
\end{corollary}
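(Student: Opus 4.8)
The plan is to identify $t_{p,q,r}(g)$ with the composition $t_{p,q,r}\circ g$ and then transport the density of the rational linear combinations of the plain tooth functions across the isometry $f\mapsto f\circ g$. First I would check, directly from the two displayed formulas defining $t_{p,q,r}$ and $t_{p,q,r}(g)$, that for every admissible triple $(p,q,r)$ and every $x\in[0,1]$ one has $t_{p,q,r}(g)(x)=t_{p,q,r}(g(x))$; this is immediate, since both sides are the very same composition of $\min$, $\max$, a scaling, and a translation applied to the number $g(x)$. Hence $t_{p,q,r}(g)=\Phi(t_{p,q,r})$, where $\Phi\colon C[0,1]\to C[0,1]$ denotes the map $f\mapsto f\circ g$. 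In particular, since $t_{p,q,r}(\text{id})=t_{p,q,r}$, the assignment $t_{p,q,r}(\text{id})\mapsto t_{p,q,r}(g)$ is exactly the restriction of $\Phi$ to the family of plain tooth functions.

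Next I would record that $\Phi$ is not merely an isometry (as already proved in the preceding Claim) but a \emph{linear} isometry of $C[0,1]$ onto itself: linearity is obvious, and surjectivity holds because $g$, being strictly monotone with maximum $1$ and minimum $0$, is a genuine homeomorphism of $[0,1]$, so $f\mapsto f\circ g^{-1}$ is a two-sided inverse of $\Phi$. Because $\Phi$ is linear it carries a rational linear combination $\sum_i a_i\, t_{p_i,q_i,r_i}$ to $\sum_i a_i\, t_{p_i,q_i,r_i}(g)$; because $\Phi$ is a surjective isometry it carries a dense subset of $C[0,1]$ onto a dense subset. Combining these two observations with the Claim that the rational linear combinations of the $t_{p,q,r}$ are dense in $C[0,1]$, we conclude that the rational linear combinations of the $t_{p,q,r}(g)$ are dense in $C[0,1]$.

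For the second assertion, extending the assignment $t_{p,q,r}(\text{id})\mapsto t_{p,q,r}(g)$ first $\mathbb{Q}$-linearly over the (dense) set of rational linear combinations and then by continuity (using that $\Phi$ is distance-preserving on that dense set) recovers $\Phi$ itself, which is the desired isometry of $C[0,1]$; uniqueness of the extension follows from density.

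I do not anticipate a serious obstacle. The only points requiring a little care are making the identification $t_{p,q,r}(g)=t_{p,q,r}\circ g$ explicitly from the formulas, and noting that $\Phi$ is onto — which is where we use that $g$ has range exactly $[0,1]$ rather than merely being monotone. Everything else is a formal consequence of the two preceding Claims.
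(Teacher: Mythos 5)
Your proposal is correct, and it fills in exactly the argument the paper leaves implicit: identify $t_{p,q,r}(g)$ with $t_{p,q,r}\circ g$, observe that $f\mapsto f\circ g$ is a surjective linear isometry (by the preceding Claim together with $g^{-1}$ being continuous), and transport the density statement of the earlier Claim across it. The paper gives no written proof for this corollary, but your route is the intended one and it is complete; your care in noting that surjectivity of the composition map is what is being used is exactly the right point to flag.
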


By the corollary above, to finish the proof of the theorem it is sufficient to 
arithmetically locate, in~$A$, at least one strictly monotonic $g$ with minimum $0$ and maximum $1$.

\medskip

Now, if $g \in C[0,1]$ is any nonconstant function, define $\hat{g}$ via $\hat{g}(x) = \frac1{b - a} (g(x) - a)$, where
\[\begin{aligned}
a &= \min\{ g(y) \st  y \in [0,1]\} \rlap{\text{\ and}}\\
b &= \max\{ g(y) \st  y \in [0,1]\}.\\
\end{aligned}\]
Note that if $g$ is strictly increasing or strictly decreasing, then $\hat{g}$ is a homeomorphism of $[0,1]$, and if $g$~is a homeomorphism of $[0,1]$, then $\hat{g} = g$.

\begin{lemma}
If $(A, d, +, (r\cdot)_{r \in \mathbb{Q}})$ is a computable presentation of $(C[0,1], d_{sup},+, (r\cdot)_{r \in \mathbb{Q}})$, then $g \mapsto \hat{g}$ is a  $\Delta^0_1(\mathbf{0'})$ function with a $\Sigma^0_1(\mathbf{0'})$ domain. (Note the domain is a dense open set.)
\end{lemma}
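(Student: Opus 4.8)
The whole statement reduces to computing the two real numbers $a=\min\{g(x):x\in[0,1]\}$ and $b=\max\{g(x):x\in[0,1]\}$ from the oracle $g\oplus\mathbf{0'}$, uniformly in $g$; once we have them, the defining formula for $\hat g$ is just a composition of operations that are computable from $g\oplus\mathbf{0'}$. The only nonconstructive ingredient we permit ourselves is the constant function $1$, which is a $\Delta^0_2$ point by the first lemma of this section. I would begin by recording the routine facts that for any point $h$ presented by a quick Cauchy sequence of $A$-rational points the real $\|h\|_{sup}$ is $h$-computable (the norm of a special point is computable, and the norm is $1$-Lipschitz), and that sums, differences, and ($g\oplus\mathbf{0'}$-computable-real) scalar multiples of $g\oplus\mathbf{0'}$-computable points are again $g\oplus\mathbf{0'}$-computable. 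Using the first of these, pick a rational $c>\|g\|_{sup}$, computable from $g$.

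The key step is then the observation that, because $g(x)+c>0$ and $c-g(x)>0$ for every $x$, we have
\[
\|g+c\cdot 1\|_{sup}=\max g+c,\qquad \|c\cdot 1-g\|_{sup}=c-\min g,
\]
so that $b=\|g+c\cdot 1\|_{sup}-c$ and $a=c-\|c\cdot 1-g\|_{sup}$. Since $1$ is $\Delta^0_2$, the points $g+c\cdot 1$ and $c\cdot 1-g$ are $g\oplus\mathbf{0'}$-computable, hence so are their norms, hence $a$ and $b$ are $g\oplus\mathbf{0'}$-computable reals, uniformly in $g$. Now $g$ lies in the domain of $g\mapsto\hat g$ precisely when it is nonconstant, i.e.\ when $b-a>0$; as $b-a$ is a $g\oplus\mathbf{0'}$-computable real, this is a $\Sigma^0_1(g\oplus\mathbf{0'})$ condition, which gives the claimed $\Sigma^0_1(\mathbf{0'})$ domain. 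That this domain is dense and open in $C[0,1]$ is immediate: the constants form a closed one-dimensional subspace, and adding an arbitrarily small tooth function turns any $g$ into a nonconstant one. Finally, on the domain $b-a$ is a positive $g\oplus\mathbf{0'}$-computable real, so $\tfrac1{b-a}$ is $g\oplus\mathbf{0'}$-computable, the point $a\cdot 1$ is $g\oplus\mathbf{0'}$-computable, and therefore
\[
\hat g=\frac1{b-a}\,(g-a\cdot 1)
\]
is $g\oplus\mathbf{0'}$-computable, uniformly; hence $g\mapsto\hat g$ is $\Delta^0_1(\mathbf{0'})$.

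\textbf{Main obstacle.} The one genuinely non-bookkeeping point is extracting $\max g$ (and $\min g$) \emph{exactly}, rather than only from one side. The naive approach — testing rationals $q$ against $g$ via the $\Sigma^0_1(\mathbf{0'})$ predicate ``$g-q\cdot 1$ is everywhere positive'' from the earlier corollary — presents $a$ as a left-c.e.\ real and $b$ as a right-c.e.\ real relative to $g\oplus\mathbf{0'}$, which is not enough to compute $\hat g$ (making them genuinely computable this way would cost a second jump, exactly as in the $f^{+}$ lemma). The trick that avoids this is the identity $\|g+c\cdot 1\|_{sup}=\max g+c$ for $c>\|g\|_{sup}$, which reads off $\max g$ from a single norm computation. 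Everything past that is the arithmetic of computable reals and computable points.
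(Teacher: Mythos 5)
Your proof is correct and uses the same overall strategy as the paper: reduce the lemma to showing that $a=\min g$ and $b=\max g$ are $g\oplus\mathbf{0'}$-computable reals, uniformly in $g$, by expressing them in terms of norms of $g\oplus\mathbf{0'}$-computable points (which requires the $\Delta^0_2$ point $1$ from the first lemma of the section). You are, however, more careful than the paper's own write-up: the paper asserts $b=d(0,g)$, but $d(0,g)=\|g\|_{sup}=\max_x|g(x)|$, which equals $\max g$ only when $\max g\geq|\min g|$ (it fails already for $g\equiv-1$). Your shift-by-$c$ device, choosing a rational $c>\|g\|_{sup}$ so that $g+c\cdot 1$ and $c\cdot 1-g$ are everywhere positive and reading off $b=\|g+c\cdot 1\|_{sup}-c$ and $a=c-\|c\cdot 1-g\|_{sup}$, is the clean way to repair this; once $b$ is in hand, the paper's second formula $a=b-d(b\cdot 1-g,0)$ is fine, so the two approaches are really the same modulo your fix. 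Your remark that the everywhere-positive predicate only delivers $a$ as left-c.e.\ and $b$ as right-c.e.\ relative to $g\oplus\mathbf{0'}$, and that this is the reason a norm identity is needed rather than just the earlier corollary, is a correct observation that the paper leaves implicit.
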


\begin{proof}
A function $g$ is in the domain precisely if $b \neq a$, where
\[\begin{aligned}
a &= \min\{ g(y) \st  y \in [0,1]\}\rlap{\text{\ and}}\\
b &= \max\{ g(y) \st  y \in [0,1]\}
\end{aligned}\]
as above. Thus it suffices to show that $a$ and $b$ are $g\oplus\emptyset'$-computable, uniformly, since $b = d(0,g)$ and  $a = b - d(b\cdot 1 - g, 0)$.
\end{proof}

The only remaining challenge is to locate a \emph{strictly} monotinic function in $A$.

\begin{lemma}
Let $f \in C[0, 1]$ and $\varepsilon > 0$. If $\{ h \in C[0, 1] \st  \forall x\, f(x) - \varepsilon \le h(x) < f(x) + \varepsilon\}$ contains a nonstrict monotonic function $g$, then $B(f, \varepsilon)$ contains a strictly monotonic function.
\end{lemma}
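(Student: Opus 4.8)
The idea is to perturb the nonstrict monotonic $g$ slightly so that it becomes strictly monotonic while staying within $\varepsilon$ of $f$. Without loss of generality assume $g$ is nondecreasing (the nonincreasing case is symmetric, or one applies the argument to $-g$). The obvious candidate is $g_\delta(x) = g(x) + \delta\, x$ for small $\delta > 0$: this is strictly increasing (the sum of a nondecreasing function and a strictly increasing one), and $\|g_\delta - g\|_{\sup} = \delta \to 0$, so for $\delta$ small enough $g_\delta$ still lies in $B(f,\varepsilon)$. The only subtlety is that the hypothesis gives us $f(x) - \varepsilon \le g(x) < f(x) + \varepsilon$, i.e.\ $g$ sits in a half-open $\varepsilon$-shell, not in the open ball $B(f,\varepsilon)$; but ``$g$ lies in this shell'' is in fact stronger than needed at the left endpoint, and what we must produce lands in the genuinely open ball $B(f,\varepsilon)$.

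First I would record that, since $g \in C[0,1]$ and $f \in C[0,1]$, the function $f - g$ is continuous on the compact set $[0,1]$, hence attains its maximum; call it $M = \max_x (f(x) - g(x))$. The hypothesis $g(x) \ge f(x) - \varepsilon$ for all $x$ gives $M \le \varepsilon$, and the hypothesis $g(x) < f(x) + \varepsilon$ for all $x$ gives, again by compactness, $\min_x(f(x)-g(x)) > -\varepsilon$, i.e.\ there is some $\varepsilon' < \varepsilon$ with $g(x) \ge f(x) - \varepsilon'$ is false in general—so instead I control the right side: set $m = \min_x(f(x) - g(x)) > -\varepsilon$, so $\|f - g\|_{\sup} = \max\{M, -m\} \le \varepsilon$, with the strict inequality $-m < \varepsilon$ available. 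Now pick $\delta > 0$ with $\delta < \varepsilon - (-m) = \varepsilon + m$ and also small enough that $M + \delta \le \varepsilon$ is not automatic—here is the one point to handle with care.

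The cleanest fix is to use a two-sided bump. Since $g(x) < f(x) + \varepsilon$ for every $x$ and both sides are continuous on $[0,1]$, compactness yields $\varepsilon_0 < \varepsilon$ with $g(x) \le f(x) + \varepsilon_0$ for all $x$. Set $c = \varepsilon - \varepsilon_0 > 0$. Now define $g^*(x) = g(x) + \delta x - \delta/2$ for a parameter $0 < \delta < \min\{c, \varepsilon + m\}$ where $m = \min_x(f(x)-g(x))$. Then $g^*$ is strictly increasing, and for all $x \in [0,1]$ we have $-\delta/2 \le \delta x - \delta/2 \le \delta/2$, so
\[
g^*(x) \le g(x) + \tfrac{\delta}{2} \le f(x) + \varepsilon_0 + \tfrac{\delta}{2} < f(x) + \varepsilon_0 + c = f(x) + \varepsilon,
\]
and
\[
g^*(x) \ge g(x) - \tfrac{\delta}{2} \ge f(x) + m - \tfrac{\delta}{2} > f(x) + m - (\varepsilon + m) = f(x) - \varepsilon.
\]
Hence $|g^*(x) - f(x)| < \varepsilon$ for all $x$, so $\|g^* - f\|_{\sup} < \varepsilon$ (again using compactness of $[0,1]$ to pass from the pointwise strict inequality to a strict sup inequality), i.e.\ $g^* \in B(f,\varepsilon)$, and $g^*$ is strictly monotonic. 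The main obstacle is purely this bookkeeping with open versus half-open shells: one must extract, via compactness, genuine strict slack on the relevant side before adding the linear perturbation, rather than perturbing first and hoping the inequalities survive. If $g$ is nonincreasing instead, replace $\delta x$ by $-\delta x$ throughout; the estimates are identical.
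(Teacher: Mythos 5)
Your high-level strategy — add a small linear perturbation to $g$ to make it strictly increasing, using compactness to secure slack — is the same as the paper's, but the specific perturbation you choose is wrong and the key estimate is a sign error. You correctly observe (mid-paragraph) that the hypothesis only gives the strict inequality on one side: $g(x) < f(x)+\varepsilon$ has slack by compactness, while $g(x) \ge f(x)-\varepsilon$ can be an equality. Despite this, you pick the two-sided perturbation $\delta x - \delta/2$, which is \emph{negative} for $x < 1/2$, so at a point $x_0 < 1/2$ with $g(x_0) = f(x_0) - \varepsilon$ you get $g^*(x_0) = g(x_0) - \delta(1/2 - x_0) < f(x_0) - \varepsilon$, so $g^*\notin B(f,\varepsilon)$. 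Your attempted lower-bound chain hides this behind the claim $g(x) \ge f(x) + m$ where $m = \min_x(f(x)-g(x))$; but $f(x)-g(x)\ge m$ rewrites as $g(x)\le f(x)-m$, an \emph{upper} bound, not a lower bound. Concretely, take $f \equiv 0$, $\varepsilon = 1$, $g \equiv -1$: then $m = 1$, your claim asserts $-1 \ge 1$, and $g^*(0) = -1 - \delta/2 < -1 = f(0)-\varepsilon$.

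The fix is exactly what you half-noticed: perturb in one direction only, using a \emph{strictly positive} strictly increasing function. The paper takes $h(x) = g(x) + \delta(1+x)/2$, where compactness gives $\delta > 0$ with $g(x) < f(x) + \varepsilon - \delta$ for all $x$; then $\delta/2 \le \delta(1+x)/2 \le \delta$, so $h > g \ge f - \varepsilon$ (strict, since the added term is bounded away from $0$) and $h \le g + \delta < f + \varepsilon$, and $h$ is strictly increasing as the sum of nondecreasing and strictly increasing. Your proof, as written, does not establish the lemma.
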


\begin{proof}
Without loss of generality, assume $g$ is nondecreasing.  By compactness, fix a $\delta > 0$ such that $g(x) < f(x) + \varepsilon - \delta$ for all $x$, and let $h(x) = g(x) + \delta(1+x)/2$.  Then $g(x) < h(x) < f(x) + \varepsilon$ for all $x$, and $h$~is strictly monotonic.
\end{proof}

\begin{lemma}
If $(A, d, +, (r\cdot)_{r \in \mathbb{Q}})$ is a computable presentation of $(C[0,1], d_{sup},+, (r\cdot)_{r \in \mathbb{Q}})$, then the collection of open balls $B(f,\varepsilon)$ which contain a (strictly) monotonic function is $\Pi^0_4$.
\end{lemma}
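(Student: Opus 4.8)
The plan is to reduce the assertion to a statement about \emph{descents} of $f$ and then to bound the complexity of detecting a descent using only the Banach-lattice structure. The guiding constraint is that an abstract presentation $(A,d,+,(r\cdot)_{r\in\mathbb Q})$ of $C[0,1]$ supplies no evaluation functionals $f\mapsto f(x)$ — this is exactly why $\mathrm{id}$ had to be adjoined as an oracle in the tooth-function lemma — so everything must be carried out "pointlessly", through $f\mapsto f^{+}$, $\max$, $\min$, constants and norms, whose costs ($\Delta^{0}_{1}(\mathbf{0''})$, $\Sigma^{0}_{1}(\mathbf{0'})$, and so on) were established above.

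\textbf{Step 1: reduce to descents and ascents.} By the previous lemma, $B(f,\varepsilon)$ contains a strictly monotonic function iff the closed tube $T=\{h:\ \forall x\ f(x)-\varepsilon\le h(x)<f(x)+\varepsilon\}$ contains a (not necessarily strict) monotonic function. I would then observe that the least nondecreasing function pointwise dominating $f-\varepsilon$ is the running maximum $M(x)=\max_{0\le y\le x}\bigl(f(y)-\varepsilon\bigr)$, which is again in $C[0,1]$; hence $T$ contains a nondecreasing function exactly when $M(x)<f(x)+\varepsilon$ for all $x$, i.e.\ exactly when $f(y)-f(x)<2\varepsilon$ whenever $y\le x$. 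Arguing symmetrically from the right, $B(f,\varepsilon)$ contains a strictly monotonic function iff $f$ has no descent of size $\ge 2\varepsilon$ or no ascent of size $\ge 2\varepsilon$; equivalently it fails to contain one iff $\mathrm{Desc}(f,2\varepsilon)\wedge\mathrm{Asc}(f,2\varepsilon)$ holds, where $\mathrm{Desc}(f,D):\equiv\exists\,0\le y\le x\le 1\,\bigl(f(y)-f(x)\ge D\bigr)$ and $\mathrm{Asc}$ is its mirror. So it is enough to show that $\mathrm{Desc}(f,D)$ (hence $\mathrm{Asc}(f,D)$) is $\Sigma^{0}_{4}$ uniformly in the $A$-rational point $f$ and the rational $D$: the desired collection is then the complement of a $\Sigma^{0}_{4}$ set, hence $\Pi^{0}_{4}$.

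\textbf{Step 2: detect a descent pointlessly.} A descent of size $\ge D$ exists iff for some rational level $c$ there is a point at which $f\ge c+D$ lying strictly to the left of a point at which $f\le c$. I would capture this via the functions $u_{c}=(f-(c+D)\cdot 1)^{+}$ and $v_{c}=(c\cdot 1-f)^{+}$, which are uniformly $\Delta^{0}_{1}(\mathbf{0''})$ and whose positivity sets are exactly $\{f>c+D\}$ and $\{f<c\}$; a $D$-descent at level $c$ then means that the positivity set of $u_{c}$ meets the region to the left of the positivity set of $v_{c}$. Since no position information on $[0,1]$ is available, I would certify this "left of" relation between the disjoint positivity sets of two $\mathbf{0''}$-computable nonnegative functions intrinsically, by quantifying over $A$-rational witnesses (for instance: the relation fails iff there is an order-separator, i.e.\ $A$-rational functions arbitrarily close to a nondecreasing function that is $\equiv 1$ on $\{v_{c}>0\}$ and $\equiv 0$ on $\{u_{c}>0\}$). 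Putting this together — an innermost $\mathbf{0''}$-test on the lattice data, a quantifier over $A$-rational witnesses for the order relation, the existential over the rational level $c$, and an outer rational slack parameter needed to pass between the strict and non-strict versions and to make the running-maximum bound exact — I expect $\mathrm{Desc}(f,D)$ to come out $\Sigma^{0}_{4}$, whence the collection of balls is $\Pi^{0}_{4}$.

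\textbf{The hard part.} The soft part is Step 1 (the running-maximum reduction). The genuine obstacle is Step 2: capturing "one positivity set lies to the left of another" using only the order and lattice structure, there being no access to $[0,1]$ itself, and then doing the quantifier bookkeeping carefully enough — threading the strict versus non-strict inequalities through the running-maximum argument — to land at exactly level $4$ rather than at $3$ or $5$.
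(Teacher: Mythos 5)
Your Step 1 reduction is correct and cleanly stated: indeed $B(f,\varepsilon)$ fails to contain a strictly monotonic function exactly when $f$ has both a descent and an ascent of size at least $2\varepsilon$, as the running-maximum argument shows. The paper does not phrase its proof this way, but the reduction is sound and arguably illuminating.

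Step 2 has a genuine gap, and it is not a matter of bookkeeping. The property $\mathrm{Desc}(f,D)$ cannot be a $\Sigma^0_4$ (or indeed arithmetical) property of $f$ relative to the abstract presentation, because it is not preserved by the automorphism $f\mapsto f(1-\cdot)$ of $(C[0,1],d_{\sup},+,(r\cdot))$: reversal swaps descents with ascents, but it is an isometric linear automorphism that could carry one computable presentation to another indistinguishable one. Any first-order formula over the abstract presentation and $f$ is invariant under this reversal, so neither $\mathrm{Desc}$ nor $\mathrm{Asc}$ alone is expressible; only symmetric combinations such as $\mathrm{Desc}\wedge\mathrm{Asc}$ can be. So the plan of showing ``$\mathrm{Desc}(f,D)$ is $\Sigma^0_4$'' is not merely unproved, it is false as stated.

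Moreover, the proposed certification mechanism is circular: to say ``the descent relation fails because there is an $A$-rational approximation to a \emph{nondecreasing} order-separator'' requires you to express ``nondecreasing'' in the abstract presentation, and that is exactly the missing structure. (There is also an orientation slip --- with $u_c$ detecting $\{f>c+D\}$ on the left and $v_c$ detecting $\{f<c\}$ on the right, the separator you need is $0$ on the left set and $1$ on the right, not the other way around --- but that is minor beside the circularity.) The paper resolves this by a genuinely different device: a function $h$ is (up to rescaling) a homeomorphism of $[0,1]$ precisely when the lattice combinations $t_{p,q,r}(\hat h)$ are dense in $C[0,1]$, so the existence of a strictly monotonic function in $B(f,\varepsilon)$ is captured by a density condition quantified over finite collections of rational balls, and the converse is checked via an Intermediate Value Theorem case analysis producing forced collision pairs of intervals. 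That tooth-function density characterization is the intrinsic replacement for ``nondecreasing'' that your argument lacks, and I do not see how to obtain the lemma without it or something equivalent.
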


\begin{proof}
We claim that such open balls $B(f, \varepsilon)$ can be characterized by the following:
\begin{quote}
For every finite collection of $A$-rational open balls $\{ B(g_j, \delta_j) \st  j < N\}$, there is a nonconstant $A$-rational point $h \in B(f,\varepsilon)$ such that for each $j < N$ there is a linear combination of the $t_{p, q, r}(\hat{h})$ which lies in $B(g_j, \delta_j)$.
\end{quote}
We observe that this is $\Pi^0_2(\mathbf{0''})$.

Suppose $B(f,\varepsilon)$ contains a (strictly) monotonic function $k$ and fix $\{ B(g_j, \delta_j) \st  j < N\}$.  Then $\hat{k}$ is a homeomorphism of $[0,1]$, and so the rational linear combinations of $t_{p,q,r}(\hat{k})$ are dense, and in particular there is some rational linear combination $\sum_{i < n_j} t_{p_{i,j}, q_{i,j}, r_{i,j}}(\hat{k})$ in $B(g_j, \delta_j)$ for each $j < N$.  Since each $k \mapsto \sum_{i < n_j} t_{p_{i,j}, q_{i,j}, r_{i,j}}(\hat{k})$ is continuous, each $B(g_j, \delta_j)$ is open, and $N$ is finite, there is some $A$-rational point $h \in B(f, \varepsilon)$ such that $\sum_{i < n_j} t_{p_{i,j}, q_{i,j}, r_{i,j}}(\hat{h}) \in B(g_j, \delta_j)$ for each $j < N$.

Conversely, suppose $B(f, \varepsilon)$ contains no (strictly) monotonic function.  Our argument is based on the unit interval being $T_4$.
\begin{claim}For such a $B(f,\varepsilon)$, there is an $N$ and a set of pairs of closed intervals $\{ (I_j^0, I_j^1) \st  j < N\}$ such that
\begin{itemize}
\item for $j < N$, $I_j^0 \cap I_j^1 = \emptyset$, and
\item for any $h \in B(f,\varepsilon)$, there is $j < N$ and $x_0 \in I_j^0, x_1 \in I_j^1$ with $h(x_0) = h(x_1)$.
\end{itemize}
We allow singletons as closed intervals.
\end{claim}
For the moment, assuming the claim is true, we note that for any nonconstant $h \in B(f, \varepsilon)$, if we fix the appropriate pair $(I_j^0, I_j^1)$, every linear combination $\ell$ of $t_{p, q, r}(\hat{h})$ must also have this property---there are $x_0 \in I_j^0$ and $x_1 \in I_j^1$ with $\ell(x_0) = \ell(x_1)$.  Now, for $j < N$, fix any $A$-rational $g_j$ such that $g(x) \le 0$ for all $x \in I_j^0$ and $g(x) \ge 2$ for all $x \in I_j^1$.  Our balls are $\{ B(g_j, 1) \st  j < N\}$.  For any $h \in B(f, \varepsilon)$, for the appropriate $j$, no rational linear combination of $t_{p, q,r}(\hat{h})$ can produce an element of $B(g_j, 1)$.

It remains only to prove the claim.  Let $B' = \{ h \in C[0, 1]\st  \forall x\, f(x) - \varepsilon \le h(x) < f(x) + \varepsilon\}$.  Then $B'$~contains no nonstrict monotonic function.  As $h(x) = \max\{ f(y) - \varepsilon \st  y \le x\}$ is nondecreasing, $h \not \in B'$, and so for any $x$ there is some $y < x$ such that $f(y) - \varepsilon \ge f(x)+\varepsilon$.

Similarly, $h(u) = \max\{ f(v) - \varepsilon \st  v \ge u\}$ is nonincreasing, and so not in $B'$, and thus  for any $u$ there is $v > u$ such that $f(v) - \varepsilon \ge f(u) + \varepsilon$.

Note that for any $h \in B(f, \varepsilon)$, $h(x) < h(y)$ and $f(u) < f(v)$.  We consider the various cases for the arrangements of $x, y, u$ and $v$.
\begin{enumerate}
\item $x < u$.  We divide into subcases:
\begin{enumerate}
\item $f(x) = f(u)$. Let $I_0^0 = [y, x]$ and $I_0^1 = [u, v]$.  Then for any $h \in B(f, \varepsilon)$, ${h(y) > f(y) - \varepsilon \ge f(x)+\varepsilon}$ and ${h(x) < f(x) + \varepsilon}$, and, similarly, ${h(v) > f(x) + \varepsilon > h(u)}$.  Therefore, $h$~must take the value $f(x) + \varepsilon$ somewhere on both $I_0^0$ and $I_0^1$ by the Intermediate Value Theorem.
\item $f(x) < f(u)$.  Fix $w$ with $y < w < x$.  Let $I_0^0 = [y, w]$, $I_0^1 = [x, v]$, $I_1^0 = [w, x]$ and $I_1^1 = [u,v]$.  For any $h \in B(f, \varepsilon)$, if $h(w) \le f(u) + \varepsilon$ and $h(y) \ge f(u) + \varepsilon$, then by the Intermediate Value Theorem $h$ takes the value $f(u)+ \varepsilon$ on $I_0^0$ and $I_0^1$.  If $h(y) < f(u) + \varepsilon \le f(v) - \varepsilon$, then since $h(x) < h(y)$, by the Intermediate Value Theorem $h$ takes the value $h(y)$ on $I_0^1$, and $y \in I_0^0$.  If $h(w) > f(u)+\varepsilon$, then by the Intermediate Value Theorem $h$ takes the value $f(u)+\varepsilon$ on $I_1^0$ and $I_1^1$.
\item $f(u) < f(x)$. As in case (1b), mutatis mutandis.
\end{enumerate}
\item $v < y$.  As in case (1), mutatis mutandis.
\item $x = u$.  Let $I_0^0 = \{y\}, I_0^1 = [u, v]$, $I_1^0 = [y, x]$, $I_1^1 = \{v\}$.  For any $h \in B(f, \varepsilon)$, if $h(y) \le h(v)$, then since $h(y) > f(u) + 2\varepsilon = f(x) + 2\varepsilon$, by the Intermediate Value Theorem $h$ must take the value $h(y)$ on $I_0^1$, and $y \in I_0^0$.  Similarly, if $h(v) \le h(y)$, then $h$ must take the value $h(v)$ on $I_1^0$, and~$v \in I_1^1$.
\item $v = y$.  As in case (3), mutatis mutandis.
\item $y < u < x < v$.  We divide into subcases:
\begin{enumerate}
\item $f(u) < f(x)$.  By continuity, choose an $x'$ with $y < x' < u$ and $f(x') < f(x)$.  Replace $x$ with~$x'$, reducing to case (1).
\item $f(x) < f(u)$.  As in case (5a), mutatis mutandis.
\item $f(x) = f(u)$.  Let $I_0^0 = [y, u]$ and $I_0^1 = [x, v]$. By the Intermediate Value Theorem, $h$ takes the value $f(u) + \varepsilon = f(x) + \varepsilon$ on both intervals.
\end{enumerate}
\item $u < y < v < x$.  As in case (5), mutatis mutandis.
\item $u < y < x < v$.  Let $I_0^0 = [y, x], I_0^1 = \{u\}, I_1^0 = [y, x], I_1^1 = \{v\}, I_2^0 = [u, y], I_2^1 = [x,v]$.  Fix~$h \in B(f, \varepsilon)$.  If ${h(x) \le h(u) \le h(y)}$, then by the Intermediate Value Theorem $j = 0$ suffices.  If ${h(x) \le h(v) \le h(y)}$, then by the Intermediate Value Theorem $j = 1$ suffices.  If $h(v) > h(y)$, which includes the case $h(u) > h(y)$, then by the Intermediate Value Theorem $h$ takes the value~$h(y)$ on $[x, v]$, and so $j = 2$ suffices.  If $h(u) < h(x)$, which includes the case $h(v) < h(x)$, then by the Intermediate Value Theorem $h$ takes the value $h(x)$ on $[u, y]$, so $j = 2$ suffices.
\item $u = y < x < v$.  Let $I_0^0 = \{u\}$ and $I_0^1 = [x, v]$.  By the Intermediate Value Theorem, $h$ takes the value $h(u)$ on $I_0^1$.
\item $u < y < x = v$.  As in case (8), mutatis mutandis.
\item $u = y < x = v$.  This is impossible since $f(u) < f(v)$ and $f(y) > f(x)$.
\item $y \le u < v \le x$.  As in cases (8), (9), or (10), mutatis mutandis.
\end{enumerate}
This completes the proof.
\end{proof}

Thus if $(A, d, +, (r\cdot)_{r \in \mathbb{Q}})$ is a computable presentation of $(C[0,1], d_{sup},+, (r\cdot)_{r \in \mathbb{Q}})$, there is a $\Delta^0_5$ monotonic function, but it may not be strict (because strictness is not generally preserved by limits). We need to be a bit more careful if we want our function to be strictly monotonic. This is done as follows.

\medskip

Using $\bf{0'}$, fix the dense open set $\mathcal{D}$ which is the domain of $g \mapsto \hat{g}$, and note that $g \mapsto t_{p,q,r}(\hat{g})$ is $\bf{0^{(2)}}$-computable upon this domain.  Let $(g_i)_{i \in \mathbb{N}}$ be the dense sequence of rational points in the given presentation. 
If $g$ were strictly monotonic, then any $g_i$ could be approximated by a finite linear combination of the teeth-functions $t_{p,q,r}(\hat{g})$ to an arbitrary precision.
If $L(g)$ is one such finite linear combination, then the map $h \mapsto L(h)$ is   $\bf{0^{(2)}}$-computable (upon  $\mathcal{D}$).
If $g$ were strictly monotonic, then for every $g_i$ and every $\epsilon$, we could find a finite linear combination $L$, a rational $g_j$ and $\delta>0$ such that 
$$ L(B(g_j, \delta)) \subseteq B(g_i, \epsilon), \mbox{ where } g \in B(g_j, \delta).$$
This is because $g \in \mathcal{D}$ and $L$ is $\bf{0^{(2)}}$-computable, and thus (in particular) continuous, and because $g_i$ can be approximated by finite linear combinations of $t_{p,q,r}(\hat{g})$ to arbitrary precision.

The (open name for) the strictly monotonic $g$ is now built in stages, as follows.  First, fix the sequence of open balls $(U_s)_{s \in \omega}$ defined as $U_s = B(g_i, 2^{-s})$ when $s = \langle i, k\rangle$, where $\langle \cdot, \cdot\rangle$ is the standard pairing function.

\smallskip

 At stage $0$ we fix an arbitrary basic open ball $B_0$ in $\mathcal{D}$ that contains a strictly monotonic function.  
 

 
 \smallskip

At stage $s$ we search for the first found finite formal linear combination $L$ of $t_{p,q,r}(\hat{\cdot})$ and a basic open  $B_s$ such that:

\begin{enumerate}
\item $B_s$ has radius $\leq 2^{-s}$;
\item $B_s$ is formally included in  $B_{s-1}$ (meaning that the inclusion is derived from the triangle inequality applied to their radii and centers);
\item $L(B_s) \subseteq U_s$;
\item $B_s$ contains a (strictly) monotonic function.
\end{enumerate}

\smallskip

\noindent (We remark that for $s_0 = \langle i, k_0\rangle$ and $s_1 = \langle i, k_1\rangle$, the witnessing $L$ may differ between stages $s_0$ and $s_1$.  We will see that this is not an issue.) 

\medskip

We argue that at no stage we are stuck, i.e., that the sequence $(B_s)_{s \in \mathbb{N}}$ is well defined. Indeed, we maintain the construction inside $\mathcal{D}$, and therefore 
every finite form $L$ that we consider is well defined in each $B_s$. Also, by induction,  $B_{s-1}$ contains a (strictly) monotonic function, and therefore for a small enough neighbourhood of this function and some $L$, the image of this neighbourhood will be inside $U_s$ as previously argued. Of course, $B_{s}$ does not have to be this particular neighbourhood, but as long as $B_{s}$
contains a (strictly) monotonic function and satisfies $L(B_s) \subseteq U_s$ we can carry on the construction to the next stage.
It follows from the lemmas preceding the construction that the construction of $(B_s)_{s \in \mathbb{N}}$ can be carried out effectively in ${\bf 0^{(4)}}$.

\medskip

Now we argue that  $(B_s)_{s \in \mathbb{N}}$ converges to a strictly monotonic function $g$. It should be clear that, since each $B_s$ contains a strictly monotonic function, their limit $g$ has to be monotonic. If $g$ is not strictly monotonic, suppose $g(x) = g(y)$, and thus $g(z) = g(x)$, for all $z \in [x,y]$.
Then infinite linear combinations of the tooth-functions $t_{p,q,r}$ defined with the help of $\hat{g}$ do not generate all of the space, but they generate a closed subspace.
Some rational special point $g_i$ will be left out of the span. (This could be a special point that looks roughly like a tooth-function with support strictly inside $[x,y]$.)
 For a small enough $\epsilon$, there is no $\delta$ and no finite form $L$ such that $L(B(g, \delta) )\subseteq B(g_i, \epsilon)$.
 But this contradicts condition $(3)$ in the construction for any $s$ of the form $s = \langle i,k\rangle$ with $2^{-s} < \epsilon$.

\medskip








Thus if $(A, d, +, (r\cdot)_{r \in \mathbb{Q}})$ is a computable presentation of $(C[0,1], d_{sup},+, (r\cdot)_{r \in \mathbb{Q}})$, there is a $\Delta^0_5$ strictly monotonic function and thus a $\Delta^0_5$ isomorphism between $(A, d, +, (r\cdot)_{r \in \mathbb{Q}})$ and $(C[0,1], d_{sup},+, (r\cdot)_{r \in \mathbb{Q}})$.  It follows that the collection of indices for computable presentations of $(C[0,1], d_{sup},+,(r\cdot)_{r\in\mathbb{Q}})$ is arithmetical.

\begin{corollary}\label{cor:multcor}
In any presentation of $(C[0,1], d_{sup},+,(r\cdot)_{r\in\mathbb{Q}})$, multiplication of functions is arithmetically computable relative to the presentation.
\end{corollary}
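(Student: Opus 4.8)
The plan is to transport the ordinary pointwise product from the standard presentation of $C[0,1]$ (by rational polynomials, or equivalently by rational linear combinations of tooth functions, on which multiplication is plainly computable) to $\overline A$ along the arithmetical isomorphism constructed in the proof of Theorem~\ref{thm:3}, and to observe that this isomorphism respects products.

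First I would recall that the construction preceding this corollary produces, from any computable presentation $(A,d,+,(r\cdot)_{r\in\mathbb Q})$ of $(C[0,1],d_{sup},+,(r\cdot)_{r\in\mathbb Q})$, a strictly monotonic function $g\in\overline A$ with a $\mathbf 0^{(4)}$-computable name (equivalently, $\Delta^0_5$); after replacing $g$ by $\hat g$ (the map $g\mapsto\hat g$ being $\Delta^0_1(\mathbf 0')$ on its dense open domain) we may assume $g$ is a homeomorphism of $[0,1]$. By the corollary to the Banach--Stone claim, the assignment $t_{p,q,r}(\mathrm{id})\mapsto t_{p,q,r}(g)$ extends to an isometric isomorphism $\Theta\colon C[0,1]\to\overline A$. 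Moreover, the points $t_{p,q,r}(g)$ are \emph{uniformly} computable from $\mathbf 0^{(4)}$: each is obtained from $g$ by finitely many applications of $\min$, $\max$, and rational affine maps, all of which are $\Delta^0_1(\mathbf 0'')$ by the corollaries above, while $g$ itself has a $\mathbf 0^{(4)}$-computable name. Hence $\Theta$ is $\mathbf 0^{(4)}$-computable. Its inverse is $\mathbf 0^{(4)}$-computable as well: to approximate $\Theta^{-1}(u)$ to precision $2^{-n}$, use $\mathbf 0^{(4)}$ to search for a rational linear combination $L=\sum_i c_i\, t_{p_i,q_i,r_i}(g)$ with $d(L,u)<2^{-n}$ (one exists, since these combinations are dense in $\overline A$), and output $\sum_i c_i\, t_{p_i,q_i,r_i}(\mathrm{id})$; since $\Theta$ is an isometry, this is within $2^{-n}$ of $\Theta^{-1}(u)$.

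Next I would note that $\Theta$ is in fact an isomorphism of Banach \emph{algebras}. Indeed $\Theta$ is induced by precomposition with the homeomorphism $g$, and precomposition commutes with the pointwise product, $(\phi\psi)\circ g=(\phi\circ g)(\psi\circ g)$, so $\Theta(\phi\psi)=\Theta(\phi)\,\Theta(\psi)$ on the dense set of rational linear combinations of tooth functions, hence everywhere by continuity. (Under the identification $\overline A\cong C[0,1]$ the pointwise product on $\overline A$ is determined up to an overall sign by Banach--Stone, which does not affect the argument.) Consequently, for $u,v\in\overline A$ we have $u\cdot v=\Theta\bigl(\Theta^{-1}(u)\cdot\Theta^{-1}(v)\bigr)$. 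On the standard presentation, multiplication is computable: given quickly converging rational-polynomial approximants $p_n\to\phi$ and $q_n\to\psi$ with computable sup-norm bounds, $p_nq_n\to\phi\psi$ quickly because $\lVert\phi\psi-p_nq_n\rVert\le\lVert\phi\rVert\,\lVert\psi-q_n\rVert+\lVert q_n\rVert\,\lVert\phi-p_n\rVert$. Composing the $\mathbf 0^{(4)}$-computable maps $\Theta$ and $\Theta^{-1}$ with this computable multiplication shows that $(u,v)\mapsto u\cdot v$ on $\overline A$ is $\mathbf 0^{(4)}$-computable, in particular arithmetical relative to the presentation.

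The main obstacle is making the density of the rational linear combinations of the $t_{p,q,r}(g)$ effective relative to an arithmetical oracle, i.e.\ verifying that $\Theta^{-1}$ is arithmetically computable; this rests on the uniform $\mathbf 0^{(2)}$-computability (given a name for $g$) of the maps $g\mapsto t_{p,q,r}(g)$ established above, together with the corollary asserting that their rational span is dense. The remaining ingredients---that $\Theta$ respects products, which is automatic for a composition operator, and the elementary product estimate---are routine.
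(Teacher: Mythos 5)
Your proposal is correct and matches the approach the paper intends (the paper states the corollary without spelling out a proof, leaving it as a consequence of the $\Delta^0_5$ isomorphism constructed in the preceding paragraphs). The key steps you fill in---that $\Theta$ and $\Theta^{-1}$ are $\mathbf{0}^{(4)}$-computable, that $\Theta$ respects products because it is a composition operator, and that pointwise multiplication is computable on the standard presentation via the product estimate---are exactly what is needed, and your aside about the Banach--Stone sign ambiguity correctly identifies why the product on $\overline A$ is well-defined once the constant function $1$ has been distinguished.
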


\section{Conclusion: Interpreting the index set results}\label{concl}
Now we can address questions (Q.1), (Q.2) and (Q.2) more directly: is there a useful classification of continuous regular/transducer functions? Is there a characterisation of 
$C[0,1]$ among all separable Banach spaces?

The first main result, Theorem~\ref{thm:kuku}, completely reduces (Q.2) to (Q.1), and 
even though its proof seemingly has nothing to do with index sets, the argument was discovered in our attempt to show that the index sets for these classes are different. Indeed, index set techniques often rely on fine-grained analysis of object being studied, and this analysis is not always recursion-theoretic in nature. Of course, Theorem~\ref{thm:kuku} is a  characterization-type result; however, it does not really answer (Q.1) or (Q2), it just shows these questions are equivalent.

The second main result Theorem~\ref{thm:mainreg} gives a rather strong and sharp estimate for the complexity of the index set of transducer (regular continuous) functions among the functions that exhibit all known properties expected from a transducer function.
What does it tell us about the classification problem for such functions? As we discussed in the introduction, this indicated that the known properties of transducer functions do not really help in their classification, and that very likely there is no nice classification of such functions that would be simpler than their definition.

 As  argued in \cite{compclassif}, a useful classification should provide a better algorithmic description of objects: the classifications of finite abelian groups, finite fields, and 2-dimensional compact surfaces all have these nice algorithmic properties.
In other words, even if a reasonably nice analytic description of transducer functions can be found,  it will have to  be a \emph{reformulation} rather than a useful structural \emph{classification} of such functions. For example, Theorem~\ref{thm:kuku} is one such reformulation.
Of course, there are many nice useful reformulations, often called dualities, in mathematics: regular and automatic languages, Boolean algebras and Stone spaces, compact and discrete abelian groups, etc. While these results are definitely helpful, none of these dualities help to reduce the complexity of natural decision procedures for the dual objects, and such results should not be confused with classifications.  Informally, Theorem~\ref{thm:mainreg} says that one should not expect a breakthrough classification, but perhaps an interesting analytic reformulation can still be found. Thus, Theorem~\ref{thm:mainreg} can be viewed as a negative result (an `anti-structure' result in terms of \cite{GK:02}).

In contrast, Theorem~\ref{thm:3} is more of a positive `structure' result. Indeed, it gives an unexpectedly low upper bound for the characterization of $C[0,1]$ among all separable Banach spaces. What does the characterisation tell us?
It is essentially formulated in terms of a certain notion of local independence that allows one to \emph{arithmetically} construct the standard basis consisting of the tooth functions with rational break-points.
This is done  using only the standard Banach space operations using  an arbitrary countable dense set. We strongly suspect that this implies that the tooth functions are uniformly definable in some natural adaptation of the infinitary logic $\mathcal{L}_{\omega_1\omega}$ and at some finite (computable) level, perhaps some version of the continuous logic, but  we leave this as an open problem.  We also conjecture that the uniformity of the proof of Corollary~\ref{cor:multcor} should entail that multiplication is definable from the other operations in some formal language.  These formal syntactic definitions (if they can be found) will not be first-order in the usual sense. Nonetheless,  the result implies the existence of a first order formula  $\psi$ in the language of arithmetic such that
$$\bar{B_i} \cong C[0,1] \iff \mathbb{N} \models \psi(i),$$
where $\cong$ stands for linear isometric isomorphism.   This is the best one can expect for general separable spaces, as essentially no problems for general spaces are decidable.
The formula says that there is a $\Delta^0_5$ isomorphism, but we conjecture that it can be simplified to say that a certain process of building a basis never terminates in the space.
Establishing a sharp upper bound on the complexity of the index set of $C[0,1]$ (thus, of the respective $\psi$) is left as an open problem.

\bibliographystyle{alpha}
\bibliography{ourbib}

\end{document}